\title{Free-abelian by free groups: homomorphisms and algorithmic explorations
}
\author{André Carvalho$^*$, Jordi Delgado$^\dag$}
\date{\vspace{-7pt}
    $^*$Centro de Matemática, Faculdade de Ciências da Universidade do Porto\\%
    $^\dag$Departament de Matemàtiques, Universitat Politècnica de Catalunya\\[3ex]%
    \today
}
\newcommand{\Addresses}{{
  \bigskip
  \footnotesize

  André Carvalho\\\nopagebreak
  \textsc{Centro de Matemática, Faculdade de Ciências da Universidade do Porto\\
  R. Campo Alegre s/n\\ 4169-007 Porto, Portugal}\\\nopagebreak
  \url{andrecruzcarvalho@gmail.com}

  \medskip

  Jordi Delgado\\\nopagebreak
  \textsc{Departament de Matemàtiques, Universitat Politècnica de Catalunya}\\\nopagebreak
  \url{jorge.delgado@upc.edu }

}}
\begin{document}

\maketitle

 \begin{abstract}
 \noindent 
 We obtain an explicit description of the endomorphisms of free-abelian by free groups ($\Fn \ltimes \Zm$) together with a characterization of when they are injective and surjective. As a consequence we see that free-abelian by free groups are Hopfian and not coHopfian, and we investigate the isomorphism problem and the Brinkmann Problem for this family of groups. In particular, we prove that the isomorphism problem (undecidable in general) is decidable when restricted to finite actions, and that the Brinkmann Problem is decidable both for monomorphisms and automorphisms.
 \end{abstract}

 \bigskip

 \textsc{Keywords}: Free-abelian by free, homomorphisms, isomorphism problem, Brinkmann \mbox{problem}.

 \textsc{Mathematics Subject Classification 2010}:  20E06, 20F05, 20F10.

\vspace{15pt}


Free-abelian by free groups~(FABFs), namely groups of the form $\Fn \ltimes \Zm$ ($n \geq 2$) constitute an intriguing family which has aroused interest in recent times, both from the algebraic and the algorithmic points of view. Even the apparently tame case of free-abelian times free groups~(FATFs) $\Fn \times \Zm$ (corresponding to the trivial action) turns out to present nontrivial behaviors that have given rise to several works studying, for example, subgroup intersections~\parencite{Delgado_stallings_FTA_2022}, relative order \parencite{delgado_relative_2022}, compression and inertia \cite{roy_degrees_2021}, or different aspects of its group of automorphisms~\parencite{delgado_algorithmic_2013,roy_fixed_2020,carvalho_dynamics_2022}. 

The case of general actions is, of course, much more complicated, and many natural problems on it are still open, or even known to be algorithmically undecidable. For example, in~\parencite{delgado_extensions_2017}, a variation of Stallings automata was developed in order to solve the subgroup membership problem (\MP) for FABFs. However, the natural approach of considering products of automata to solve the subgroup intersection problem~(\SIP) was only successful for trivial actions (namely for FATFs) leaving $\SIP$ open for $\FABF$s. Even more indicative of the aforementioned complicatedness, in \parencite{bogopolski_orbit_2010}, \citeauthor{bogopolski_orbit_2010} discover a neat characterization for the decidability of the conjugacy problem (\CP) of certain group extensions which naturally provides the undecidability of $\CP(\FABF)$, and from this --- after an astute elaboration --- the undecidability of the isomorphism problem $\IP(\FABF)$ (see the unpublished note~\parencite{levitt_unsolvability_2008}).

In \Cref{sec: FABFs} we present free-abelian by free groups (FABFs), state some elementary facts about them, and set the basic terminology and notation used in the paper.

In \Cref{sec: homos} we 
 address the general study of homomorphisms between FABFs for which we obtain explicit descriptions that classify them in two main types, 
\emph{type I} (containing all the injective ones), and a degenerated case that we call \emph{type II}. 
In particular, we see that the automorphisms of $\Fn \ltimes \Zm$ are given by triples $(\varphi,\matr{Q},\matr{P})$, where $\varphi \in \Aut(\Fn)$, $\matr{Q} \in \Aut (\Zm) = \GL_m(\ZZ)$, and $\matr{P}$ is an $n \times m$ integer matrix linking the free and the free-abelian part. As a consequence, we obtain the Hopfianity and non-coHopfianity of these groups, and we deduce expressions for powers of endomorphisms.

In \Cref{sec: IP} we explore the isomorphism problem 
(known to be undecidable for general FABF groups)
for the cases where the kernel of the action is finitely generated: in case the action has finite image we can prove the decidability of the isomorphism problem, and in case the action is faithful
we prove that the isomorphism problem is equivalent to a constrained version of the subgroup conjugacy problem in $\GLm$.

In \Cref{sec: orbit problems} we deal with the Brinkmann orbit problem (given $g,h \in G$ and $\varphi \in \End(G)$, does there exist some $n\geq 0$ such that $(g)\varphi^{n} = h$?) which we prove to be solvable for endomorphisms of type I (including monomorphisms and epimorphisms) after addressing some general considerations.

\subsection*{Notation and terminology}
We summary below some general notation, terminology, and conventions used throughout the paper; more specific notation and terminology is introduced in the parts of the paper where it is first used.

We use lowercase boldface Latin font (${\vect{a},\vect{b},\vect{c},\ldots}$) to denote elements in $\Zm$, and
uppercase boldface Latin font to denote matrices ($\matr{A},\matr{B},\matr{C},\ldots $).
Lowercase Greek letters ($\auti_i,\varphi, \chi,\ldots$) are used to denote homomorphisms involving the (typically free or free-abelian) factor groups, whereas uppercase Greek letters ($\Phi,\Psi,\Omega,\cdots$) are used to denote homomorphisms between semidirect products.  
Furthermore,
homomorphisms and matrices are assumed to act on the right;
that is, we denote by $(x)\varphi$ (or simply $x \varphi$) the image of the element $x$ by the homomorphism $\varphi$, and we denote by  $\varphi \psi$ the composition
\smash{$A \overset{\varphi}{\to} B \overset{\psi}{\to} C$}.

 Regarding computability, we denote by
\begin{equation*}
\mathsf{P} \,\equiv\,
\gen{\texttt{cond}}\?_{\gen{\texttt{input}}}
\end{equation*}
the decision problem $\mathsf{P}$ with inputs in $\gen{\texttt{input}}$ and outputs \yep/\nop\ depending on whether the condition $\gen{\texttt{cond}}$ holds. For example, if $G = \pres{X}{R}$ is  a finitely presented group, then,
\begin{itemize}
    \item the word problem for $G$ is $\WP(G) \equiv
        w\in\normalcl{R}\?_{w \in (X^{\pm})^*}$ 
    \item the conjugacy problem for $G$ is $\CP(G) \equiv \exists w \in (X^{\pm})^* \st w^{-1} u w =_G v \,\?\,_{u,v \in (X^{\pm})^*}$
\end{itemize}


Also, if $\mathcal{G}$ is a family of groups, and $\mathsf{P}(G)$ is a decision problem on $G$, we 
say that $\mathsf{P}$ is decidable in $\mathcal{G}$ (or that $\mathsf{P}(\mathcal{G})$ is decidable) if $\mathsf{P}(G)$ is decidable (\ie there exists a Turing machine computing it) for every $G \in \mathcal{G}$; and that $\mathsf{P}$ is \defin{undecidable in $\mathcal{G}$} (or that $\mathsf{P}(\mathcal{G})$ is undecidable) otherwise. For example, if we denote by $\mathcal{H}$ the class of hyperbolic groups, we might say that the isomorphism problem $\IP(\mathcal{H})$ is decidable (see~\cite{dahmani_isomorphism_2011}), and that the subgroup membership problem $\MP(\mathcal{H})$ is undecidable (see~\cite{rips_subgroups_1982}).

\section{Free-abelian by free groups} \label{sec: FABFs}
We call \emph{free-abelian by free} (FABF) the groups $G$ that fit in the middle of a short exact sequence of the form 
\[\trivial \to \Zm \to G \to \Fn \to \trivial \,,\]
where $\Zm$ is a \emph{nontrivial} f.g.~free-abelian group (\ie $m \geq 1$), and $\Fn$ is a \emph{noncyclic} f.g.~free group (\ie $n \geq 2$).\footnote{We intentionally exclude from the family the degenerated cases corresponding to 
free and free-abelian groups, and the groups of the form $\ZZ \ltimes \ZZ^m
$, which are polycyclic and therefore well known and very different in nature from the rest of the family.}
Since the quotient group $\Fn$ is free, the given short exact sequence always splits, and $G$ can be seen as a semidirect product of the form $\Fn \ltimes \Zm$.

Equivalently, a group is \defin{free-abelian by free} (FABF) if
it admits a presentation of the form
\begin{equation} \label{eq: presentation ABF}
    \Ga
    \,=\,
    \ABFa
    \,=\,
    \Pres{\hspace{-3pt}
    \begin{array}{ll}
    t_1,\ldots,t_m\\
    x_1,\ldots,x_n
    \end{array}
    }
    {
    \begin{array}{ll}
    t_i^{-1} t_j t_i = t_j & \forall i,j \in [1,m]\\
    x_i^{-1} t_j x_i = (t_j)\auti_i & \forall i \in [1,n],\forall j\in[1,m]
    \end{array} \hspace{-5pt}
    }
\end{equation}
where $m \geq 1$, $n \geq 2$, and
%
$\acti \colon \Fn \to \Aut(\Zm)$ is a homomorphism
$x_i \mapsto \auti_i$
($i \in [1,n])$.
Note that then, $\gen{t_1,\ldots,t_m}_{\Ga} \isom \ZZ^m$, $\gen{x_1, \ldots, x_n}_{\Ga} \isom \Fn$,
and by~$(t_j)\auti_i$ in \eqref{eq: presentation ABF} we mean (a word in $\set{t_1,\ldots,t_m}^{\pm}$ representing) the image under $\auti_i$ of
(the element in $\ZZ^m$ represented by)~$t_j$. We denote by $\FABF$ the class of free-abelian by free groups.

Given a word $u \in \Fn$, we write
$\auti_u = (u)\acti$,
and we generalize this notation to arbitrary subsets $S \subseteq \Fn$ in the natural way, \ie $\auti_S = (S)\acti$.
Also, for every $\vect{a} = (a_1,\ldots,a_m) \in \Zm$, we abbreviate
$t_1^{a_1} t_2^{a_2} \cdots t_m^{a_m} = \t^{\vect{a}}$; in particular, ${t_j = \t^{\vect{e_j}}}$, where $\vect{e_j}$ denotes the $j$-th canonical generator in~$\Zm$. It is clear that~$\set{\fba{u}{a} \colon \vect{a}\in\Zm, u \in \Fn}$ is a set of normal forms for the elements in~$\Ga$ (corresponding to the set of pairs $\set{(u,\vect{a}) \st u \in \Fn \text{ and } \vect{a}\in \Zm }$) and hence the word problem $\WP(\Ga)$ is decidable for every \FABF\ group $\Ga$. In contrast, the conjugacy problem (\CP) has been shown to be not always decidable within this family of groups (as proved by \citeauthor{bogopolski_orbit_2010} in \cite{bogopolski_orbit_2010}).

We denote by $\tau \colon \Ga \to \ZZ^m, \fba{u}{a} \mapsto t^{\vect{a}}$ and $\pi\colon \Ga \to \Free[X], \fba{u}{a} \mapsto u$ the projections to the free-abelian and free part respectively.
We usually identify
the automorphisms of $\Zm = \gen{t_1,\ldots,t_m}$ with their corresponding integer (in fact unimodular) matrices \wrt the canonical basis $(\vect{e_1},\ldots,\vect{e_m})$. We write $\matr{A_i} = M(\auti_i,(\vect{e_j})_j)$,
for every $i \in [1,n]$;
$\matr{A}_u = M(\auti_u,(\vect{e_j})_j)$ for every $u \in \Fn$;
and
$\matr{A}_S = \set{\matr{A}_u \st u \in S}$, for every $S \subseteq \Fn$.
Of course, $\acti$ being a homomorphism means that for every $u,v \in \Fn$, $\matr{A_{uv}} = \matr{A_u} \matr{A_v}$ and $\matr{A_{u^{-1}}} = \matr{A_u^{-1}}$.
Then, the presentation \eqref{eq: presentation ABF} takes the form
\begin{equation} \label{eq: presentation ABF2}
    \Ga
    \,=\,
    \ABFa
    \,=\,
    \Pres{\hspace{-3pt}
    \begin{array}{ll}
    t_1,\ldots,t_m\\
    x_1,\ldots,x_n
    \end{array}
    }
    {
    \begin{array}{ll}
    t_i^{-1} t_j t_i = t_j & \forall i,j \in [1,m]\\
    x_i^{-1} t_j x_i = \t^{\vect{a_{ij}}} & \forall i \in [1,n],\forall j\in[1,m]
    \end{array} \hspace{-5pt}
    }
\end{equation}
where $\vect{a_{ij}} = \vect{e_j} \matr{A_i}$ is the $j$-th row of the matrix $\matr{A_i}$. 

We will assume that FABF groups are given in this form; that is, a group $\Ga$ is characterized by the following `data': two  integers $m\geq 1$, $n \geq 2$, and an $n$-tuple of
unimodular ${m \times m}$
matrices $\matr{A_1},\ldots,\matr{A_n} \in \GL_m(\ZZ)$.
Note that the relations in \eqref{eq: presentation ABF2} lead to the following ``multiplication rules'' in $\Ga$: for every $u \in \Fn$ and every $\vect{a} \in \ZZ^m$, $u^{-1} \t^{\vect{a}} u = \t^{\vect{a}\matr{A_u}}$, and hence
\begin{equation}
\ta{a} u
\,=\,
u \t^{\vect{a} \matr{A}_u}
\hspace{20pt}
\text{and}
\hspace{20pt}
 u \t^{\vect{a}}
\,=\,
\t^{\vect{a} \matr{A}_{u}^{-1}} u \,.
\end{equation}
That is, abelian elements are allowed to jump right (\resp left) over free elements at the price of being affected by (\resp the inverse of) the automorphism associated to the jumped free element. In particular we have the following expressions for inverses and conjugates in $\Ga$.

\begin{lem}
Let $\fba{u}{a},\fba{w}{c} \in \Ga$. Then:
\begin{enumerate}[ind]
    \item $(\fba{u}{a})^{-1}
    = u^{-1} \, \t^{- \vect{a} \matr{A}_{u}^{-1}}
    = u^{-1} \, \t^{\, \vect{a} (-\matr{A}_{u}^{-1})}
    $;
    \item $(\fba{w}{c})^{-1} (\fba{u}{a}) (\fba{w}{c})
    =  w^{-1} u w \ 
    \t^{\vect{c}(\matrid -  \matr{A}_{w^{-1} u w}) + \vect{a} \matr{A}_w}$.
\end{enumerate}
\end{lem}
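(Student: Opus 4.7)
The plan is to derive both identities by direct calculation using only the commutation rules displayed just before the lemma, namely
\begin{equation*}
\ta{a}\, u \,=\, u\,\t^{\vect{a}\matr{A}_u}
\qquad\text{and}\qquad
u\,\ta{a} \,=\, \t^{\vect{a}\matr{A}_u^{-1}}\, u ,
\end{equation*}
together with the functoriality $\matr{A}_{uv} = \matr{A}_u \matr{A}_v$ and $\matr{A}_{u^{-1}} = \matr{A}_u^{-1}$ of the action $\acti$.

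For part (i), I would start from the identity $(u \ta{a})^{-1} = \t^{-\vect{a}} u^{-1}$ (valid in any group) and then push $\t^{-\vect{a}}$ to the right of $u^{-1}$ using the second commutation rule above, which picks up the factor $\matr{A}_{u^{-1}}^{-1} \cdot \text{wait}$ — more precisely, applied to $u^{-1}$ this rule yields $\t^{-\vect{a}}\, u^{-1} = u^{-1}\, \t^{-\vect{a}\matr{A}_{u^{-1}}} = u^{-1}\, \t^{-\vect{a}\matr{A}_u^{-1}}$, giving the claim.

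For part (ii), using (i) I would expand
\begin{equation*}
(\fba{w}{c})^{-1}\,(\fba{u}{a})\,(\fba{w}{c})
\,=\, w^{-1}\, \t^{-\vect{c}\matr{A}_w^{-1}}\, u\, \ta{a}\, w\, \t^{\vect{c}} ,
\end{equation*}
and then sweep every $\t$-term to the right end by two successive applications of $\t^{\vect{b}}\, v = v\, \t^{\vect{b}\matr{A}_v}$: first across $u$, then across $w$. Collecting exponents yields $\vect{c}(\matrid - \matr{A}_w^{-1}\matr{A}_u\matr{A}_w) + \vect{a}\matr{A}_w$, and the functoriality identity collapses $\matr{A}_w^{-1}\matr{A}_u\matr{A}_w$ into $\matr{A}_{w^{-1}uw}$, producing exactly the stated form.

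There is no real obstacle here; the whole content is bookkeeping of how $\matr{A}_w$ and its inverse appear when abelian elements ``jump'' over free elements. The only point that needs minor care is signs and the direction in which $\matr{A}_u$ versus $\matr{A}_u^{-1}$ appears, which is dictated unambiguously by the convention that homomorphisms and matrices act on the right.
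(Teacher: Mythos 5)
Your computation is correct and is exactly the argument the paper intends: the lemma is stated without proof as an immediate consequence of the multiplication rules $\ta{a}u = u\t^{\vect{a}\matr{A}_u}$ and $u\ta{a} = \t^{\vect{a}\matr{A}_u^{-1}}u$ together with $\matr{A}_{uv}=\matr{A}_u\matr{A}_v$, which is precisely the bookkeeping you carry out. Both parts check out, including the signs and the collapse of $\matr{A}_w^{-1}\matr{A}_u\matr{A}_w$ into $\matr{A}_{w^{-1}uw}$.
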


Below we describe arbitrary products in $\Ga$, and introduce some notation to abbreviate their (somewhat convoluted) form.

\begin{nott}
Let $\Ga$ be a FABF group, let $w = y_{i_1}^{\eps_{1}} y_{i_2}^{\eps_{2}} \cdots y_{i_l}^{\eps_{l}}  \in \Free[Y] = \pres{y_1,\ldots,y_k}{-}$ be a formal nontrivial free word
on~$Y = \set{y_1,\ldots,y_k}$,
let $g_1,\ldots,g_k\in G$, and let $i,j \in [1,l]$ where $i<j$. Then, we denote by:
\begin{itemize}
    \item $w[j]$ the $j$-th letter (\ie monic subword) in $w$
    (hence, $w[j] = y_{i_j}^{\eps_j}$),
    \item $w[i,j]$ the subword $w[i]\cdots w[j]$ of $w$ (of length $j-i+1$),
    \item $ 
w[g_1,\ldots, g_k]
$
the $l$-tuple obtained after replacing
$y_j$ by $g_j$ in $w$,
    \item $w(g_1,\ldots, g_k)
$
the result of evaluating $w[g_1,\ldots, g_k]$ in $\Ga$.
\end{itemize}
We usually abbreviate $\overrightarrow{g_i} = (g_1,\ldots,g_k)$.
Note that then $w[\overrightarrow{g_i}] = w[g_1,\ldots, g_k] \in (\Ga)
^l$ whereas
$w(\overrightarrow{g_i}) = w(g_1,\ldots, g_k)
\in \Ga$.

\end{nott}
If $\fba{u_1}{p_1},
\ldots, \fba{u_k}{p_k} \in \Ga$
then,
\begin{align*}
w(u_1 \ta{p_1},\ldots, u_k \ta{p_k}) 
&\,=\,
(u_{i_1} \ta{p_{i_1}})^{\eps_1}
(u_{i_2} \ta{p_{i_2}})^{\eps_2}
\cdots
(u_{i_l} \ta{p_{i_l}})^{\eps_l}
\\
&\,=\,
(u_{i_1}^{\eps_1} \t[\widetilde{p}_{i_1}])
(u_{i_2}^{\eps_2} \t[\widetilde{p}_{i_2}])
\cdots
(u_{i_l}^{\eps_l} \t[\widetilde{p}_{i_l}]),
\\
&\,=\,
\t^{\,\vect{\widetilde{p}_{i_1}}
\matr{A}_{w[1](\overrightarrow{u_i})}^{-1} + 
\vect{\widetilde{p}_{i_2}}
\matr{A}_{w[1,2](\overrightarrow{u_i})}^{-1} 
+ \cdots +
\vect{\widetilde{p}_{i_l}}
\matr{A}_{w(\overrightarrow{u_i})}^{-1}
} \,  w(\overrightarrow{u_i})\\
&\,=\,  \t^{\sum_{j=1}^{l}\vect{\widetilde{p}_{i_j}}\,
\matr{A}_{w[1,j](\overrightarrow{u_i})}^{-1}
} \,  w(\overrightarrow{u_i})\\
&\,=\,  w(\overrightarrow{u_i}) \, \t^{\sum_{j=1}^{l}\vect{\widetilde{p}_{i_j}}\,
\matr{A}_{w[1,j](\overrightarrow{u_i})}^{-1}
\,
\matr{A}_{w(\overrightarrow{u_i})}
} \\
&\,=\,  w(\overrightarrow{u_i}) \, \t^{\sum_{j=1}^{l}\vect{\widetilde{p}_{i_j}}\,
\matr{A}_{w[j+1,l](\overrightarrow{u_i})}
} \\
&\,=\,  w(\overrightarrow{u_i}) \, \t^{\,\widetilde{\vect{p}}_{w}\,
\widetilde{\matr{A}}_{w(\overrightarrow{u_i})}} \, ,
\end{align*}
where:
\begin{itemize}
    \item we write $(u_{i_j} \ta{p_{i_j}})^{\eps_j} =  u_{i_j}^{\eps_{j}} \ta{\widetilde{p}_{i_j}}$; \ie we denote by $\vect{\widetilde{p}_{i_j}}$ the abelian part of the $j$-th syllable of $w[\overrightarrow{u_i \t[p_i]}]$ (after normalizing);
    \item we write $\vect{\widetilde{p}}_w =(\vect{\widetilde{\vect{p}}_{i_1}},\vect{\widetilde{p}_{i_2}},\ldots,\vect{\widetilde{\vect{p}}_{i_l}})$;
    \item we denote by $\widetilde{\matr{A}}_{w(\overrightarrow{u_i})}$ the block matrix having as $j$-th row block $\matr{A}_{w[j+1,l](\overrightarrow{u_i})}$, \ie
    \begin{equation}
    \widetilde{\matr{A}}_{w(\overrightarrow{u_i})}
    \,=\,
    \begin{pmatrix}
    \matr{A}_{w[2,l](\overrightarrow{u_i})}\\
    \matr{A}_{w[3,l](\overrightarrow{u_i})}\\
    \vdots \\
    \matr{A}_{w[l](\overrightarrow{u_i})}\\
    \matrid[m]
    \end{pmatrix}.
\end{equation}
\end{itemize}

We summarize the obtained result in the statement below.

\begin{prop} \label{prop: Ga products}
Let $\fba{u_1}{p_1},
\ldots, \fba{u_k}{p_k} \in \Ga$, and let $w$ be a nontrivial formal free word on~$k$ generators. 
Then (with the previous notation), 

\begin{equation}
   w(\overrightarrow{u_i \t[p_i]}) 
   \,=\,
   w(\overrightarrow{u_i}) \, \t^{\,\widetilde{\vect{p}}_{w}\,
\widetilde{\matr{A}}_{w(\overrightarrow{u_i})}}
\end{equation}

In particular, 
if $\fba{u}{p}\in \Ga$ and $l >0$, then
\begin{equation}
    (\fba{u}{p})^l
    \,=\,
    u^l \, \t^{\vect{p} (\ssum_{j=0}^{l-1} \matr{A}_{u}^{j})}.
\end{equation}
\end{prop}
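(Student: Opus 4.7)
The first identity is exactly the endpoint of the chain of equalities displayed immediately before the statement; the plan is to justify it formally by induction on the length $l$ of the word $w$, using only the commutation rule $\t^{\vect{a}} u = u\t^{\vect{a}\matr{A}_u}$ (together with its inverse counterpart $u \t^{\vect{a}} = \t^{\vect{a}\matr{A}_u^{-1}} u$), derived from the defining relations of $\Ga$ and recorded in the preceding lemma. For $l=1$ the identity is immediate: if $\eps_1 = +1$ the exponent of $\t$ is just $p_{i_1}$, which matches $\widetilde{\vect{p}}_w \widetilde{\matr{A}}_{w(\overrightarrow{u_i})}$ since the single block of $\widetilde{\matr{A}}_w$ is $\matrid[m]$; and the case $\eps_1 = -1$ follows from the inverse formula in the preceding lemma.

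For the inductive step, write $w = w' \cdot w[l]$ where $w'$ has length $l-1$, and use the inductive hypothesis on $w'$, together with the base case applied to the final syllable $w[l]$, to get
\begin{equation*}
w(\overrightarrow{u_i \t[p_i]}) \,=\, w'(\overrightarrow{u_i}) \, \t^{\,\widetilde{\vect{p}}_{w'}\,\widetilde{\matr{A}}_{w'(\overrightarrow{u_i})}} \cdot u_{i_l}^{\eps_l} \t^{\widetilde{p}_{i_l}}.
\end{equation*}
Then I push the middle $\t^{(\cdots)}$ past $u_{i_l}^{\eps_l} = w[l](\overrightarrow{u_i})$ using the commutation rule, which multiplies the exponent by $\matr{A}_{w[l](\overrightarrow{u_i})}$ on the right. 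The resulting exponent is
$\widetilde{\vect{p}}_{w'}\,\widetilde{\matr{A}}_{w'(\overrightarrow{u_i})}\,\matr{A}_{w[l](\overrightarrow{u_i})} + \widetilde{p}_{i_l}$. The only bookkeeping step is to recognize this as $\widetilde{\vect{p}}_w\,\widetilde{\matr{A}}_{w(\overrightarrow{u_i})}$, which follows from the homomorphism property $\matr{A}_{w'[j+1,l-1](\overrightarrow{u_i})} \, \matr{A}_{w[l](\overrightarrow{u_i})} = \matr{A}_{w[j+1,l](\overrightarrow{u_i})}$ applied row by row, noting that the bottom $\matrid[m]$ block of $\widetilde{\matr{A}}_{w'(\overrightarrow{u_i})}$ turns into $\matr{A}_{w[l](\overrightarrow{u_i})}$ while a new $\matrid[m]$ block is appended at the bottom, matching the definition of $\widetilde{\matr{A}}_{w(\overrightarrow{u_i})}$.

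For the \emph{in particular} clause, I just specialize to $k=1$, $w = y_1^l$, and $(u_1,p_1) = (u,p)$. All syllables have exponent $\eps_j = +1$, so $\widetilde{p}_{i_j} = p$ for every $j \in [1,l]$; moreover $w[j+1,l](u) = u^{l-j}$, hence $\matr{A}_{w[j+1,l](u)} = \matr{A}_u^{l-j}$ (with the convention $\matr{A}_u^0 = \matrid[m]$ corresponding to $j=l$). Substituting in the general formula, the exponent becomes $\vect{p} \sum_{j=1}^{l} \matr{A}_u^{l-j} = \vect{p} \sum_{k=0}^{l-1} \matr{A}_u^k$, which is the desired expression.

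The argument is conceptually straightforward: it is a single repeated application of the commutation rule. The only real obstacle is notational — namely, keeping the index shifts consistent between the syllables of $w$, the tilde-decorated abelian coordinates $\widetilde{\vect{p}}_w$, and the block structure of $\widetilde{\matr{A}}_{w(\overrightarrow{u_i})}$ across the inductive step.
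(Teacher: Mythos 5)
Your proof is correct and is essentially the paper's argument: the paper establishes the identity by the direct computation displayed just before the statement (normalizing each syllable, pushing all abelian parts to the far left to collect exponents $\widetilde{\vect{p}}_{i_j}\matr{A}_{w[1,j](\overrightarrow{u_i})}^{-1}$, and then transferring the accumulated abelian word back to the right of $w(\overrightarrow{u_i})$ via $\matr{A}_{w[1,j]}^{-1}\matr{A}_{w}=\matr{A}_{w[j+1,l]}$), which is exactly the computation your induction on $|w|$ formalizes, with the same commutation rule and the same telescoping use of $\matr{A}_{uv}=\matr{A}_u\matr{A}_v$. The specialization to $(\fba{u}{p})^l$ also matches the paper.
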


\section{Homomorphisms 
} \label{sec: homos}
In this section, we
use the conditions given by the relators in the standard presentation \eqref{eq: presentation ABF2}
to obtain a description of the homomorphisms between FABF groups, which, as seen below, fit into two main classes.

To this end, throughout the section we assume that $m,m'\geq 1$ and $n,n'\geq 2$ are arbitrary integers, and that 
$\acti \colon \Fn \to \GLm$
and $\actii \colon \Fnn \to \GLmm$
are arbitrary homomorphisms (given by matrices $\matr{A_1}, \ldots , \matr{A_n} \in \GLm$ and 
$\matr{B_1}, \ldots, \matr{B_{n\!'}} \in \GLmm$ respectively), and we define $\Ga$ as in \eqref{eq: presentation ABF2} and
\begin{align*}
   \Gb
    &\,=\,
    \Fnn \ltimes_{\actii} \Zmm
    \,=\,
    \Pres{\hspace{-3pt}
    \begin{array}{ll}
    z_1,\ldots,z_{m\!'}\\
    y_1,\ldots,y_{n\!'}
    \end{array}
    }
    {
    \begin{array}{ll}
    z_i^{-1} z_j z_i = z_j & \forall i,j \in [1,m\!']\\
    y_i^{-1} z_j y_i = \z^{\vect{b_{ij}}} & \forall i \in [1,n\!'],\forall j\in[1,m\!']
    \end{array}
    },
\end{align*}
where $\Fnn$ is the free group with basis $(y_1,\ldots,y_{n\!'})$, and $\Zmm$ is the free-abelian group with basis $(z_1,\ldots,z_{m\!'})$.
 Multiplicative notation for abelian words is adapted in the natural way; namely, for every $\vect{b} = (b_1,\ldots,b_{m\!'}) \in \Zmm$, we write $
\z^{\vect{b}} =
z_1^{b_1} z_2^{b_2} \cdots z_{m\!'}^{b_{m\!'}}$.

In order to study homomorphisms $\Ga \to \Gb$ (between FABF groups), let us consider an arbitrary assignation of elements in $\Gb$ to the generators of $\Ga$:
\begin{equation} \label{eq: candidate 1}
    \Phi \colon
    \left\{ \!
    \begin{array}{rcll}
    t_j & \mapsto & u_j\, \z^{\vect{q_j}} = u_j \, \z^{\,\vect{e_j} \matr{Q}} , & \text{for all } j\in[1,m]\\
    x_i & \mapsto & v_i\,  \z^{\vect{p_i}} = x_i\varphi \, \z^{\,\vect{f_i} \matr{P}}, & \text{for all } i\in[1,n] \ \,
    \end{array}
    \right.
\end{equation}
where $\vect{q_j},\vect{p_i} \in \Zmm$ and $u_j,v_i \in \Free[n\!']$, for every $j \in [1,m]$ and every $i \in [1,n]$; and we have denoted by: 
\begin{itemize}
    \item $\matr{Q}$
the $m \times m'$ integer matrix having as $j$-th row the vector $\vect{q_j}$ ($j = 1, \ldots, m$);
    \item $\matr{P}$
the $n \times m'$ integer matrix having as $i$-th row the vector $\vect{p_i}$ ($i = 1, \ldots, n$);
    \item $\varphi$ the homomorphism of $\Fn \to \Free[n\!']$ given by the assignation $x_i \mapsto v_i$.
    \item $(\vect{e_j})_{j=1}^{m}$ and $(\vect{f_i})_{i=1}^{n}$ the canonical bases of $\ZZ^m$ and~$\ZZ^n$, respectively.
\end{itemize}

As is well known, for $\Phi$ to define a homomorphism $\Ga \to \Gb$ it is necessary (and sufficient) that $\Phi$ converts the relators for $\Ga$ into relations for $\Gb$. Hence, we need to impose the following two conditions to the assignation \eqref{eq: candidate 1}:
\begin{align}
    ((t_i)\Phi)^{-1} \, (t_j)\Phi \, (t_i)\Phi &\,=_{\Gb}\, (t_j)\Phi
    && \hspace{-50pt}
    \forall i,j \in [1,m]
    \label{eq: comm}
    \tag{\texttt{comm}}\\
    ((x_i)\Phi)^{-1} \, (t_j)\Phi \, (x_i)\Phi &\,=_{\Gb}\, ((t_j)\auti_i)\Phi
    && \hspace{-50pt}
    \forall j \in [1,m] \ \forall i \in [1,n]
    \label{eq: conj}
    \tag{\texttt{conj}}
    \end{align}  
If it is clear from the context, throughout the proof we will omit the $\Gb$ subscript and we will assume that equalities involving words in the generators for $\Gb$ work modulo the relations in $\Gb$.  Applying the description of $\Phi$
to the commutativity conditions \eqref{eq: comm} we obtain:
\begin{align*}
    (u_i \,\z^{\vect{q_i}})^{-1}\, u_j \, \z^{\vect{q_j}}\, u_i z^{\vect{q_i}}
    &\,=\, u_j \, \z^{\vect{q_j}}
    && \hspace{-50pt}
    \forall i,j \in [1,m],
\end{align*}
which, after normalizing, separating the free and free-abelian parts, and rearranging, takes the form: 
\begin{align}
    u_i^{-1} u_j u_i &\,=\, u_j 
    \label{eq: comm free 0}\\
    \vect{q_i} (\matr{B}_{u_i}^{-1}\matr{B}_{u_j}\matr{B}_{u_i} - \Imm)
     &\,=\, 
    \vect{q_j} (\matr{B}_{u_i} - \Imm )
    \label{eq: comm ab 0}
\end{align}
for every $i,j \in [1,m]$.

The first set of conditions \eqref{eq: comm free 0} is saying that (for $\Phi$ to be a homomorphism)  all the $u_j$'s in~\eqref{eq: candidate 1} must commute with each other.
In the context of free groups, it is easy to see that this means that all the $u_i$'s must be powers of a common element $v \in \Fnn$.
More precisely, the set of conditions \eqref{eq: comm free 0} is equivalent to the existence of some $v \in \Fnn$ (which can be assumed to be a non proper power) and integers $r_1,\ldots,r_m \in \ZZ$ such that $u_j \,=\, v^{r_j}$, for every  $j \in [1,m]$. Hence, the free and free-abelian parts of \eqref{eq: comm}
 take the simplified form:
\begin{align}
    u_j &\,=\, v^{r_j} &&\hspace{-80pt} \forall j\in[1,m] \label{eq: comm.fr} \tag{\texttt{comm.fr}}\\
    \vect{q_i}(\matr{B}_{v}^{r_j}-\Imm) 
    &\,=\,
    \vect{q_j} (\matr{B}_{v}^{r_i}- \Imm) 
    &&\hspace{-80pt} \forall i,j\in[1,m] \label{eq: comm.ab}\tag{\texttt{comm.ab}}
\end{align}
where $v \in \Fnn$ is a non proper power, and $\vect{r} = (r_1,\ldots,r_m) \in \ZZ^m$. 
That is, 
our general candidate homomorphism $\Ga \to \Gb$ takes the form:
\begin{equation} \label{eq: candidate 3}
    \Phi \colon
    \left\{ \!
    \begin{array}{rcll}
    t_j & \mapsto & v^{r_j}\, \z^{\vect{q_j}}, & \text{for all } j\in[1,m]\\
    x_i & \mapsto & v_i\, \z^{\vect{p_i}}& \text{for all } i\in[1,n] \ \,
    \end{array}
    \right.
\end{equation}
where $v,v_i \in \Fnn$ ($i \in [1,n]$), $v$ is a non-proper power, 
$\vect{r} = (r_1,\ldots,r_m) \in \ZZ^m$,
$\vect{q_j},\vect{p_i} \in \Zmm$ for every $j \in [1,m]$ and $i \in [1,n]$,  and
 $\vect{q_i}(\matr{B}_{v}^{r_j}- \Imm)
 =
 \vect{q_j} (\matr{B}_{v}^{r_i}- \Imm)$
 for every $i,j \in [1,m]$.
Then, for every $\vect{a} = (a_1,\ldots,a_m) \in \ZZ^m$,
\begin{equation} \label{eq: a Phi pi}
    (\t^{\vect{a}}) \Phi \pi
    \,=\,
    v^{\sum_{j=1}^{m} r_j a_j}
    \,=\,
     v^{\vect{r}  \vect{a}^\T} . 
\end{equation}

Now, applying the description of $\Phi$ in \eqref{eq: candidate 3} to the conjugacy conditions in \eqref{eq: conj} we obtain:
\begin{equation*} \label{eq: conj II}
\begin{aligned} 
\z^{-\vect{p_i}}\, v_i^{-1} \, v^{r_j} \, \z^{\vect{q_j}} \, v_i \, \z^{\vect{p_i}}
    &\,=\, ((t_j)\auti_i)\Phi
    & \forall j \in [1,m]\  \forall i \in [1,n]
    \\
    v_i^{-1} \, v^{r_j} \, v_i \, \z^{-\vect{p_i} \matr{B}_{v_i}^{-1} \matr{B}_{v}^{r_j} \matr{B}_{v_i} + \vect{q_j} \matr{B}_{v_i} + \vect{p_i}} 
    &\,=\, (\t^{
    \vect{a_{i,j}}})\Phi
    & \quad \forall j \in [1,m] \ \forall i \in [1,n],
\end{aligned}
\end{equation*}
where $\vect{a_{ij}} = \vect{e_j} \matr{A_i}$ denotes the $j$-th row of the matrix $\matr{A_i}$.
After, equalizing the free and abelian parts, and taking into account \eqref{eq: a Phi pi}, we obtain the conditions:
\begin{align} 
    v_i^{-1}\, v^{r_j}\, v_i &\,=\, v^{\vect{r}\, \vect{a^\T_{i,j}}}
      & \forall j \in [1,m] \ \forall i \in [1,n]\tag{\texttt{conj.fr}} \label{eq: conj.fr}\\
      \vect{q_j} \matr{B}_{v_i} +
   \vect{p_i}(\Imm- \matr{B}_{v_i}^{-1} \matr{B}_{v}^{r_j} \matr{B}_{v_i})
    &\,=\, (\t^{\vect{a_{i,j}}})\Phi\tau
    & \forall j \in [1,m] \ \forall i \in [1,n]\tag{\texttt{conj.ab}}\label{eq: conj.ab}
\end{align}

At this point, it is convenient to split our candidate homomorphisms \eqref{eq: candidate 3} into two cases: those where
$u_j = v^{r_j} = \trivial$ for every ${j \in [1,m]}$ (\defin{homomorphisms of type I}),
and those where
at least one of the 
$u_j = v^{r_j}$'s is nontrivial (\defin{homomorphisms of type II}).
As we will see, this second case corresponds to a degenerated situation; we consider it in the first place.

\textbf{Homomorphisms of type II:} At least one of the $u_j = v^{r_j}$ is nontrivial (or equivalently, $v \neq \trivial$ and $\vect{r} \neq \vect{0}$) in \eqref{eq: candidate 3}. 

We recall the well-known result below.
\begin{lem}
Let $v \in \Fn \setmin \set{\trivial}$ be a
nontrivial
non proper power. If there exist $w \in \Fn$ and $p,q \in \ZZ$ such that $w^{-1} v^p w = v^q$, then $p = q$ and $w\in \gen{v}$.
\end{lem}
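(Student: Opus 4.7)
This is a classical statement about the cyclic structure of centralizers in free groups. I would first dispatch the degenerate case $p = 0$, in which the equation forces $v^q = \trivial$ and hence $q = 0$ since $v$ has infinite order in $\Fn$ (so $p = q$; the further claim $w \in \gen{v}$ is vacuous here, and the lemma is evidently meant for $p \neq 0$). From here on assume $p \neq 0$, so that automatically $q \neq 0$ as well.

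The essential tool is the well-known description of centralizers in a free group: for any $g \in \Fn \setmin \set{\trivial}$, $C_{\Fn}(g) = \gen{g_0}$, where $g_0$ is the unique non-proper-power element with $g \in \gen{g_0}$. The hypothesis that $v$ is not a proper power then gives $C_{\Fn}(v^p) = C_{\Fn}(v^q) = \gen{v}$. Next, observe that $w^{-1} v w$ commutes with $v^q = w^{-1} v^p w$ (since $v$ commutes with $v^p$ and conjugation preserves commutation), so $w^{-1} v w \in \gen{v}$; writing $w^{-1} v w = v^k$ and comparing cyclic reduction lengths---conjugation-invariant and satisfying $|v^k|_c = |k|\,|v|_c$ with $|v|_c > 0$---forces $|k| = 1$.

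It remains to rule out the sign $k = -1$. If $k = +1$, then $w$ centralizes $v$, so $w \in \gen{v}$, and the original equation collapses to $v^p = v^q$, giving $p = q$. If $k = -1$, then $v$ would be conjugate to $v^{-1}$, which I would exclude as follows: by Nielsen--Schreier, the subgroup $\gen{v, w} \leq \Fn$ is free; the nontrivial relation $w v w^{-1} v = \trivial$ then forces $\gen{v, w}$ to be cyclic (a nontrivial relation cannot hold in a free group of rank $\geq 2$), so $w = v^j$ for some $j$ and the relation collapses to $v^2 = \trivial$, contradicting the torsion-freeness of $\Fn$. The only nonformal step is this sign-ruling---equivalent to the folklore fact that no nontrivial element of a free group is conjugate to its own inverse.
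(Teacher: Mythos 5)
The paper offers no proof of this lemma at all: it is introduced with ``We recall the well-known result below'' and left as a standard fact about free groups. Your write-up is therefore supplying an argument the authors omitted, and it is essentially correct. The two ingredients you use --- that the centralizer of a nontrivial element of $\Fn$ is the maximal cyclic subgroup containing it (so $C_{\Fn}(v^p)=\gen{v}$ when $v$ is not a proper power and $p\neq 0$), and that no nontrivial element of a free group is conjugate to its inverse --- are exactly the standard route, and your derivation of $w^{-1}vw\in\gen{v}$ via commutation with $v^q$, followed by the cyclically reduced length count to force $k=\pm 1$, is sound. Two small points. First, in ruling out $k=-1$ you invoke ``a nontrivial relation cannot hold in a free group of rank $\geq 2$''; to make this airtight you should say that if $\gen{v,w}$ had rank $2$ then the two-element generating set $\set{v,w}$ would be a basis (by Nielsen's theorem or Hopficity of free groups), so the reduced word $w^{-1}vwv$ could not represent the identity --- as phrased, a reader could object that $v,w$ need not be a basis of the subgroup they generate. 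Second, your observation about $p=q=0$ is a legitimate catch: for $p=q=0$ the hypothesis holds for every $w$, so the conclusion $w\in\gen{v}$ is literally false as stated; this does not affect the paper's application (where the conjugation relation is only used to extract $w\in\gen{v}$ from an instance with a nontrivial power), but it is an imprecision in the lemma's statement worth flagging rather than calling ``vacuous.''
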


Since, by hypothesis,  $v \neq \trivial$, after applying the previous lemma to the conjugacy condition \eqref{eq: conj.fr} we 
deduce  that for homomorphisms of type II:
\begin{equation*}
r_j=\vect{r}\cdot \vect{a_{i,j}^\T}\qquad \forall (i,j)\in [1,n]\times [1,m] \,,  
\end{equation*}
which can be rewritten as
\begin{equation*}
\vect{r}\matr{A_i^\T} \,=\, \vect{r}  \quad \forall i \in [1,n] \,,
\end{equation*}
or equivalently as
\begin{equation*}
    \vect{r} \in \textstyle{\bigcap_{i=1}^{n} \Fix(\matr{A_i^\T})} \,; \tag{\texttt{conj.fr.II.a}}
\end{equation*}
and there exists $\vect{s} = (s_1,\ldots,s_n) \in \ZZ^n$ such that
\begin{equation}
    v_i \,=\, v^{s_i}  \quad \forall i \in [1,n] \,. \tag{\texttt{conj.fr.II.b}}
\end{equation}
Therefore, the condition \eqref{eq: conj.ab} can be simplified to:
 \begin{equation} \label{eq: conj.ab.II}
   \vect{p_i}(\matr{B}_{v}^{r_j} - \Imm)
    \,=\, \vect{q_j} \matr{B}_{v}^{s_i} - (\t^{\vect{a_{i,j}}})\Phi\tau 
    \qquad\forall j \in [1,m] \ \forall i \in [1,n]\tag{\texttt{conj.ab.II}}
\end{equation}

\medskip

\textbf{Homomorphisms of type I:} every $u_j$ ($j \in [1,n]$) is trivial in \eqref{eq: candidate 3}; that is, the candidate homomorphisms of type I are of the form:
\begin{equation} \label{eq: candidate I}
    \Phi \colon
    \left\{ \!
    \begin{array}{rcll}
    t_j & \mapsto & \z^{\vect{q_j}}, & \text{for all } j\in[1,m]\\
    x_i & \mapsto & v_i\, \z^{\vect{p_i}}& \text{for all } i\in[1,n] \,.
    \end{array}
    \right.
\end{equation}
Note that then
for every $\vect{a} \in \ZZ^m$, $(\t^{\vect{a}})\Phi = (\t^{\vect{a}})\Phi \tau = \z^{\vect{a} \matr{Q}} \in \Zmm$. That is, the restriction of $\Phi$ to the abelian part is a homomorphism of free-abelian groups; namely, $\Phi_{|\Zm} = \matr{Q} \colon \Zm \to \Zmm$.
In particular, the conditions \eqref{eq: comm} and \eqref{eq: conj.fr} trivially hold, whereas \eqref{eq: conj.ab} can be rewritten as:
\begin{equation*}
    \vect{q_j} \matr{B}_{v_i}
    \,=\, \vect{a_{i,j}} \matr{Q}
     \quad \forall j \in [1,m] \ \forall i \in [1,n]
\end{equation*}
or, more compactly, as:
\begin{equation}\label{eq: conj.ab.I}
    \matr{Q} \matr{B}_{v_i}
    \,=\, \matr{A_i} \matr{Q}
    \quad \forall i \in [1,n] \tag{\texttt{conj.ab.I}}
\end{equation}


Putting together the conditions obtained for endomorphisms of types I and II, we reach the following description of endomorphisms between FABF groups.

\begin{thm} \label{thm: homos FABF}
Let $\Ga = \Fn \ltimes_{\acti} \Zm$ and $\Gb = \Fnn \ltimes_{\actii} \Zmm$ be FABF groups. Then, the following is  a complete list of the homomorphisms $\Ga \to \Gb$:
\begin{enumerate}[dep]
    \item Homomorphisms of type I
\begin{equation} \label{eq: type I}
    \hI{\varphi}{\matr{Q}}{\matr{P}} \colon
    \left\{ \!
    \begin{array}{rcll}
    t_j & \mapsto &\z^{\vect{q_j}}, & \text{for all } j\in[1,m]\\
    x_i & \mapsto & x_i \varphi\, \z^{\vect{p_i}}& \text{for all } i\in[1,n]
    \end{array}
    \right.\,,
\end{equation}
where 
$\varphi \in \Hom(\Fn,\Fnn)$, $\matr{Q} \in \mathcal{M}_{m \times m\!'}(\ZZ)$ and $\matr{P} \in \mathcal{M}_{n \times m\!'}(\ZZ)$ satisfying
\begin{equation}\label{eq: cond_I}
    \matr{Q} \matr{B}_{x_i\! \varphi}
    \,=\, \matr{A_i} \matr{Q}
    \quad \forall i \in [1,n] \,.
\end{equation}

\item Homomorphisms of type II
    \begin{equation} \label{eq: type II}
    \Phi_{v,\vect{r},\vect{s},\matr{Q},\matr{P}} \colon
    \left\{ \!
    \begin{array}{rcll}
    t_j & \mapsto & v^{r_j}\, \z^{\vect{q_j}}, & \text{for all } j\in[1,m]\\
    x_i & \mapsto & v^{s_i}\, \z^{\vect{p_i}}& \text{for all } i\in[1,n] 
    \end{array}
    \right. \, ,
\end{equation}
where $v \in \Fnn \setmin\set{\trivial}$ is a non-proper power,
$\vect{s} \in \ZZ^n$,
$\vect{r} \in \bigcap_{i=1}^{n} \Fix(\matr{A_i^\T}) \setmin \set{\vect{0}}$,
$\matr{Q} \in M_{m\times m'}(\ZZ)$, $\matr{P} \in M_{n\times m'}(\ZZ)$,  and
 \begin{align}[left=\empheqlbrace]\,
     \vect{q_j} (\matr{B}_{v}^{r_i}- \Imm)
     &\,=\, \vect{q_i} (\matr{B}_{v}^{r_j}- \Imm)
     &&\hspace{-50pt} \forall i,j \in [1,m] \\
    \vect{p_i}(\matr{B}_{v}^{r_j} - \Imm)
    &\,=\, \vect{q_j} \matr{B}_{v}^{s_i} - (\t^{\vect{a_{i,j}}})\Phi\tau 
    &&\hspace{-50pt}\forall j \in [1,m] \ \forall i \in [1,n]
\end{align}
\end{enumerate}
\end{thm}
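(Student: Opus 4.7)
The plan is to use the universal property of the presentation \eqref{eq: presentation ABF2}: an assignment of elements of $\Gb$ to the generators of $\Ga$ extends to a homomorphism $\Ga \to \Gb$ if and only if the defining commutation relations \eqref{eq: comm} and conjugation relations \eqref{eq: conj} hold in $\Gb$. I would therefore start from the fully general candidate \eqref{eq: candidate 1}, namely $t_j\mapsto u_j\z^{\vect{q_j}}$ and $x_i\mapsto v_i\z^{\vect{p_i}}$ with $u_j,v_i\in\Fnn$ and $\vect{q_j},\vect{p_i}\in\Zmm$, substitute into the two families of relators, and use the multiplication rules from \Cref{prop: Ga products} to normalise and split free from abelian parts.

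Applying this procedure to \eqref{eq: comm} yields the free condition $u_i^{-1}u_ju_i=u_j$ together with an abelian linear identity. Since centralisers in a free group are cyclic, the free conditions are equivalent to the existence of a non-proper power $v\in\Fnn$ and integers $r_1,\ldots,r_m$ with $u_j=v^{r_j}$ for every $j$. I would then split the analysis into two cases according to whether all $v^{r_j}$ are trivial (type I) or at least one $v^{r_j}$ is non-trivial (type II), and process the conjugation relations in each case.

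For type I the commutation relations collapse completely, \eqref{eq: conj.fr} is automatic, and \eqref{eq: conj.ab} reduces to the equalities $\vect{q_j}\matr{B}_{v_i}=\vect{a_{i,j}}\matr{Q}$; these package into the single matrix identity \eqref{eq: cond_I} once we set $\varphi\colon x_i\mapsto v_i$, giving the first item. For type II the key step is to apply the cited lemma on conjugation of powers in a free group to \eqref{eq: conj.fr}: from $v_i^{-1}v^{r_j}v_i=v^{\vect{r}\cdot\vect{a_{i,j}^\T}}$, choosing for each $i$ an index $j$ with $r_j\neq 0$ (which exists since $\vect{r}\neq\vect{0}$), we deduce $\vect{r}\matr{A_i^\T}=\vect{r}$ for every $i$, i.e.\ $\vect{r}\in\bigcap_{i=1}^{n}\Fix(\matr{A_i^\T})$, and $v_i\in\gen{v}$, hence $v_i=v^{s_i}$ for some $\vect{s}\in\ZZ^n$. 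Substituting back in \eqref{eq: comm.ab} and \eqref{eq: conj.ab} gives the two bracketed equations in the statement.

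The work is essentially computational, so the only genuine point to verify is sufficiency: any triple $(\varphi,\matr{Q},\matr{P})$ satisfying \eqref{eq: cond_I}, or any tuple $(v,\vect{r},\vect{s},\matr{Q},\matr{P})$ satisfying the listed type II conditions, must actually define a homomorphism. This is however immediate from the derivation, because every condition was obtained as an equivalent rewriting of a defining relator being sent to a relation of $\Gb$; so the two lists are simultaneously necessary and sufficient, and together exhaust all homomorphisms $\Ga\to\Gb$.
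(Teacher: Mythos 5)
Your proposal is correct and follows essentially the same route as the paper: starting from the general assignment \eqref{eq: candidate 1}, translating the relators into the free/abelian conditions, using cyclicity of centralisers to write $u_j = v^{r_j}$, splitting into types I and II, and invoking the power-conjugation lemma to obtain $\vect{r}\in\bigcap_i\Fix(\matr{A_i^\T})$ and $v_i = v^{s_i}$ in the type II case. Your explicit remark that one should pick an index $j$ with $r_j\neq 0$ before applying that lemma is a slightly more careful reading than the paper's, but the argument is the same.
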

We denote by $\Hom_I(\Ga,\Gb)$ (\resp $\Hom_{I\!I}(\Ga,\Gb)$) the \defin{set of homomorphisms of type I} (\resp II) from $\Ga$ to $\Gb$.

Taking advantage of the formulas obtained in \Cref{prop: Ga products} for arbitrary products in~$\Ga$, we can give general expressions for the images of homomorphisms between FABF groups, and derive expressions for composition and powers of endomorphisms of type I.



\begin{cor} \label{cor: homos FABF I}
Let $\Phi = 
\Phi_{\varphi,\matr{Q},\matr{P}} \in \Hom_I(\Ga,\Gb)$. Then, for every $\fba{u}{a} \in \Ga$,

\begin{align}
    (\fba{u}{a}) \Phi
    &\,=\,
    u \varphi \, \z^{\, \vect{a} \matr{Q} \,+\, u\bbeta_{\!\Phi}},
\end{align}
That is,
\begin{align}
    (u \t[a]) \Phi \pi 
    &\,=\, (u \t[a]) \pi\, \Phi 
    = u \varphi, \text{  and} \label{eq: EndI pi}\\
    (u \t[a]) \Phi \, \tau
    &\,=\, (u \t[a]) \tau\, \matr{Q} + (u \t[a]) \pi \, \bbeta_{\!\Phi}
    \,=\, \vect{a} \matr{Q} + u\bbeta_{\!\Phi}. \label{eq: EndI tau}
\end{align}
where $\bbeta_{\!\Phi}\colon \Fn \to \Zmm$ such that
$(u)\bbeta_{\!\Phi} = \widetilde{\vect{p}}_u  \widetilde{\matr{B}}_{u\!\varphi}$ if $u\neq \trivial$, and $(\trivial)\bbeta_{\Phi} = \vect{0}$.

\end{cor}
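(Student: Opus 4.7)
The plan is to reduce the computation of $(\fba{u}{a})\Phi$ to a direct application of \Cref{prop: Ga products}. Since $\Phi$ is a homomorphism and $\fba{u}{a} = u \cdot \t^{\vect{a}}$ in $\Ga$, I first split $(\fba{u}{a})\Phi = (u)\Phi \cdot (\t^{\vect{a}})\Phi$. The abelian factor is immediate: by the type~I prescription $(t_j)\Phi = \z^{\vect{q_j}}$ with $\vect{q_j}$ the $j$-th row of $\matr{Q}$, linearity of the assignment on the free-abelian part yields $(\t^{\vect{a}})\Phi = \z^{\vect{a}\matr{Q}}$.

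For the free factor, I fix a reduced expression $u = x_{i_1}^{\eps_{1}}\cdots x_{i_l}^{\eps_{l}}$ and regard it as a formal word $w$ on the alphabet $\set{x_1,\ldots,x_n}$. Then $(u)\Phi$ is precisely the evaluation in $\Gb$ of $w$ at the elements $x_i\varphi\cdot\z^{\vect{p_i}}$ ($i\in[1,n]$), so \Cref{prop: Ga products} gives
\begin{equation*}
(u)\Phi \,=\, w(\overrightarrow{x_i\varphi})\,\z^{\,\widetilde{\vect{p}}_w\,\widetilde{\matr{B}}_{w(\overrightarrow{x_i\varphi})}}.
\end{equation*}
Since $\varphi$ is a homomorphism, $w(\overrightarrow{x_i\varphi}) = u\varphi$ and therefore $\widetilde{\matr{B}}_{w(\overrightarrow{x_i\varphi})} = \widetilde{\matr{B}}_{u\varphi}$, so by the defining formula for $\bbeta_{\!\Phi}$ we obtain $(u)\Phi = u\varphi\cdot\z^{u\bbeta_{\!\Phi}}$.

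Combining both factors and using that multiplication of the two abelian pieces is just addition of exponents in $\Zmm$, I conclude
\begin{equation*}
(\fba{u}{a})\Phi \,=\, u\varphi\cdot\z^{\,\vect{a}\matr{Q} \,+\, u\bbeta_{\!\Phi}},
\end{equation*}
as claimed. The degenerated case $u = \trivial$ is checked separately using the convention $(\trivial)\bbeta_{\!\Phi} = \vect{0}$, and equations~\eqref{eq: EndI pi} and~\eqref{eq: EndI tau} drop out by applying $\pi$ and $\tau$ to both sides of the displayed identity. I do not anticipate any genuine obstacle: the argument is a symbolic unfolding, and the only care required is in matching the notational conventions of \Cref{prop: Ga products} (where the ``generator substitution'' is generic) to the present setting, where it is dictated by the type~I prescription defining~$\Phi$. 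Notably, the condition~\eqref{eq: cond_I} is used only in knowing that $\Phi$ is a well-defined homomorphism, and plays no explicit role in the resulting formula.
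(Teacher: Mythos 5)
Your argument is correct and is exactly the route the paper intends: the corollary is stated as a direct consequence of \Cref{prop: Ga products}, obtained by evaluating the word for $u$ at the images $x_i\varphi\,\z^{\vect{p_i}}$ and appending the abelian factor $\z^{\vect{a}\matr{Q}}$, which commutes into the exponent. Your handling of the trivial case via the convention $(\trivial)\bbeta_{\!\Phi}=\vect{0}$ and the derivation of \eqref{eq: EndI pi} and \eqref{eq: EndI tau} by projecting match the paper's (implicit) proof.
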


Hence
$\Phi_{|\ZZ^m} = \matr{Q}$ (\ie  for every $\vect{a} \in \Zm$, $(\t^{\vect{a}}) \Phi = \z^{\vect{a} \matr{Q}}$)
and $\Phi_{\mid \Fn} \pi = \varphi$. In particular, every homomorphism \smash{$\matr{Q} \colon \ZZ^m \to \ZZ^{m'}$}
(\resp $\varphi \colon \Free[n] \to \Free[n']$) can be seen as a restriction of
$\Phi_{\matr{Q},\varphi,\matr{0}} \in \Hom_I(\Ga,\Gb)$.



\begin{cor}
Let 
$\Phi=\Phi_{\varphi,\matr{Q},\matr{P}}$ and $\Phi'=\Phi_{\varphi',\matr{Q'},\matr{P'}}$ be type I endomorphisms of $\Ga$.
Then, for every $\fba{u}{a} \in \Ga$,
\begin{align}
(\fba{u}{a}) \Phi \Phi'
    \,=\, 
    (\fba{u}{a}) \Phi_{\varphi,\matr{Q},\matr{P}} \,
    \Phi_{\varphi',\matr{Q'},\matr{P'}}
    \,=\, 
    u\varphi \varphi' \
    \t^{\vect{a} \matr{Q} \matr{Q'} + u \bbeta_{\!\Phi} \matr{Q'} + u\!\varphi \bbeta_{\!\Phi\!'}}
\end{align}
\end{cor}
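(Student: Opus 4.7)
The plan is to apply the formula from the previous corollary twice, plugging the output of $\Phi$ into $\Phi'$. Since both $\Phi$ and $\Phi'$ are type I endomorphisms of $\Ga$, we have $\varphi, \varphi' \in \End(\Free[n])$, while $\matr{Q}, \matr{Q'}$ are $m \times m$ integer matrices and $\matr{P}, \matr{P'}$ are $n \times m$ integer matrices, so all the compositions appearing in the statement make sense.

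First I would use \Cref{cor: homos FABF I} to write
\begin{equation*}
(\fba{u}{a})\Phi \,=\, u\varphi \, \t^{\vect{a}\matr{Q} + u\bbeta_{\!\Phi}}.
\end{equation*}
Since $u\varphi \in \Free[n]$, this is already in the normal form $w\,\t^{\vect{b}}$ with $w = u\varphi$ and $\vect{b} = \vect{a}\matr{Q} + u\bbeta_{\!\Phi} \in \Zm$, so I can now apply \Cref{cor: homos FABF I} a second time (to $\Phi'$, with input $(u\varphi)\,\t^{\vect{b}}$), obtaining
\begin{equation*}
\bigl((u\varphi)\,\t^{\vect{b}}\bigr)\Phi' \,=\, (u\varphi)\varphi' \, \t^{\,\vect{b}\matr{Q'} + (u\varphi)\bbeta_{\!\Phi'}}.
\end{equation*}

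Substituting $\vect{b} = \vect{a}\matr{Q} + u\bbeta_{\!\Phi}$ and using the distributivity of the matrix product (since both $\vect{a}\matr{Q}$ and $u\bbeta_{\!\Phi}$ are vectors in $\Zm$ on which $\matr{Q'}$ acts linearly on the right) yields
\begin{equation*}
(\fba{u}{a})\Phi\Phi' \,=\, u\varphi\varphi' \ \t^{\vect{a}\matr{Q}\matr{Q'} + u\bbeta_{\!\Phi}\matr{Q'} + u\varphi\, \bbeta_{\!\Phi'}},
\end{equation*}
which is precisely the claimed identity. There is no real obstacle here; the argument is a routine double application of the image formula from \Cref{cor: homos FABF I}, and the only thing to keep track of is that the ``free part'' $u\varphi$ from the first application becomes the input to the twist term $\bbeta_{\!\Phi'}$ in the second application, which accounts for the asymmetric summand $u\varphi\,\bbeta_{\!\Phi'}$ in the exponent.
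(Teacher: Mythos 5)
Your proposal is correct and matches the paper's (implicit) argument: the paper states this corollary without proof precisely because it is the immediate double application of \Cref{cor: homos FABF I} that you carry out, using linearity of $\matr{Q'}$ to split the exponent. Nothing is missing.
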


The expression for powers of endomorphisms of type I  follows easily after applying \Cref{eq: EndI pi,eq: EndI tau} recursively.

\begin{cor} \label{cor: EndI^k}
Let $\Phi = \Phi_{\varphi,\matr{Q},\matr{P}} \in \End_{I}(\Ga)$, and let $k>0$. Then, for every $\fba{u}{a} \in \Ga$,
\begin{align} \label{eq: EndI^k}
    (\fba{u}{a}) \Phi^k
    &\,=\, u \varphi^{\!k} \
    \t^{\,\vect{a} \matr{Q}^{k}
    \, + \,
    \sum_{i=0}^{k-1} u \varphi^{\!i} \bbeta_{\!\Phi} \,\matr{Q}^{k-i-1}}
\end{align}


\end{cor}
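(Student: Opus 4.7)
The plan is a straightforward induction on $k$, using \Cref{cor: homos FABF I} as the workhorse. The base case $k=1$ is exactly the formula $(\fba{u}{a})\Phi = u\varphi\,\t^{\vect{a}\matr{Q} + u\bbeta_{\!\Phi}}$ given in \Cref{cor: homos FABF I}, which matches the claimed expression since the sum then reduces to the single term $u\varphi^{0}\bbeta_{\!\Phi}\,\matr{Q}^{0} = u\bbeta_{\!\Phi}$.

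For the inductive step, assume the formula holds for some $k\geq 1$ and write $(\fba{u}{a})\Phi^{k+1} = \bigl((\fba{u}{a})\Phi^{k}\bigr)\Phi$. Setting
\[
\vect{c} \,=\, \vect{a}\matr{Q}^{k} + \sum_{i=0}^{k-1} u\varphi^{i}\bbeta_{\!\Phi}\,\matr{Q}^{k-i-1},
\]
the inductive hypothesis gives $(\fba{u}{a})\Phi^{k} = (u\varphi^{k})\,\t^{\vect{c}}$, which lies in $\Ga$ with free part $u\varphi^{k}$ and abelian part $\vect{c}$. Applying \Cref{eq: EndI pi,eq: EndI tau} to this element yields
\[
(u\varphi^{k}\,\t^{\vect{c}})\Phi
\,=\,
u\varphi^{k+1}\,\t^{\,\vect{c}\matr{Q}\,+\,(u\varphi^{k})\bbeta_{\!\Phi}} \, .
\]

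It then remains to check that
\[
\vect{c}\matr{Q} \,+\, (u\varphi^{k})\bbeta_{\!\Phi}
\,=\,
\vect{a}\matr{Q}^{k+1} \,+\, \sum_{i=0}^{k} u\varphi^{i}\bbeta_{\!\Phi}\,\matr{Q}^{k-i} \, ,
\]
which is immediate: multiplying $\vect{c}$ on the right by $\matr{Q}$ shifts each exponent of $\matr{Q}$ in the sum up by one, producing the terms with $i \in [0,k-1]$ of the new sum, and the extra summand $(u\varphi^{k})\bbeta_{\!\Phi}$ supplies precisely the missing $i=k$ term (with $\matr{Q}^{0} = \Imm$). This matches the exponent appearing in \eqref{eq: EndI^k} for $k+1$, completing the induction. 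No genuine obstacle is expected; the only minor care is the bookkeeping of indices in the summation and remembering that $\bbeta_{\!\Phi}$ is evaluated at the current free part $u\varphi^{k}$, not at $u$.
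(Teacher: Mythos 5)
Your induction is correct and follows essentially the same route as the paper: both arguments amount to applying the recurrences \eqref{eq: EndI pi} and \eqref{eq: EndI tau} from \Cref{cor: homos FABF I} repeatedly and unwinding the resulting affine recursion on the abelian part (the paper phrases this by tracking the $\pi$- and $\tau$-projections separately rather than the combined normal form, but the computation is identical). The index bookkeeping in your inductive step checks out.
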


\begin{proof}
It is clear from \Cref{eq: EndI pi} that 
$(u \t[a]) \Phi^{k} \pi = ((u \t[a]) \Phi^{k-1}) \Phi \pi = ((u \t[a]) \Phi^{k-1}) \pi\varphi$. Since $(u \t[a]) \Phi^{0} \pi = u $, we have that
\begin{equation}
(u \t[a]) \Phi^k \pi \,=\,  u \varphi^k .
\end{equation}
Similarly, from \Cref{eq: EndI tau}, and using the last result:
\begin{align}
    (u \t[a]) \Phi^{k} \tau &\,=\, ((u \t[a]) \Phi^{k-1}) \Phi \tau \notag \\
    &\,=\, (u \t[a]) \Phi^{k-1} \tau \matr{Q} + (u \t[a]) \Phi^{k-1} \pi \bbeta_{\!\Phi} \notag\\
    &\,=\, (u \t[a]) \Phi^{k-1} \tau \matr{Q} + u \varphi^{k-1} \bbeta_{\!\Phi}. \label{eq: EndI^k tau rec}
\end{align}
Again, taking into account that $(u \t[a]) \Phi^{0} \tau  = \vect{a}$, we have that
\begin{equation}
(u \t[a]) \Phi^k \tau
     \,=\, \vect{a} \matr{Q}^{k}
    \, + \,
    \textstyle{
    \sum_{i=0}^{k-1} u \varphi^{\!i} \bbeta_{\!\Phi} \,\matr{Q}^{k-i-1}}
\end{equation}
and \eqref{eq: EndI^k} follows.
\end{proof}

\begin{rem} \label{rem: hom II}
Let $ \Phi
= \Phi_{v,\vect{r},\vect{s},\matr{Q},\matr{P}} \in \Hom_{I\!I}(\Ga,\Gb)$. Then, for every $\fba{u}{a} \in \Ga$,
$(\fba{u}{a})\Phi \pi = v^{\vect{a} \vect{r} + \vect{u} \vect{s}}$,
where $\vect{u} \in \ZZ^n$ denotes the abelianization of $u \in \Fn$.
In particular,  $(\Ga) \Phi \pi$ is a cyclic subgroup of $\Fnn$.
\end{rem}

\begin{lem} \label{lem: typeII not monos epis}
Type II homomorphisms are neither injective nor surjective.
\end{lem}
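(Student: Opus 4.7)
My plan is to exploit the fact, recorded in \Cref{rem: hom II}, that the $\pi$\nobreakdash-image of $\Phi(\Ga)$ is confined to the cyclic subgroup $\gen{v}\leq \Fnn$. Both non-injectivity and non-surjectivity will be straightforward consequences of this constraint together with the assumptions $n,n'\geq 2$.

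\textbf{Non-surjectivity.} The projection $\pi \colon \Gb \to \Fnn$ is surjective, so if $\Phi$ were also surjective, the composition $\Phi\pi$ would map $\Ga$ onto $\Fnn$. But by \Cref{rem: hom II} we have $(\Ga)\Phi\pi \subseteq \gen{v}$, which is cyclic, while $\Fnn$ is non-cyclic (since $n'\geq 2$). This contradiction shows that $\Phi$ is not surjective.

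\textbf{Non-injectivity.} The inclusion $(\Ga)\Phi\pi \subseteq \gen{v}$ means that $\Phi(\Ga)$ is contained in the preimage $\pi^{-1}(\gen{v}) = \gen{v}\ltimes \Zmm \leq \Gb$. This subgroup is metabelian: it is an extension of the abelian group $\Zmm$ by the (infinite) cyclic group $\gen{v}$. Consequently every element of the second derived subgroup $\Phi(\Ga)''$ is trivial. On the other hand, for $n\geq 2$, the free group $\Fn \leq \Ga$ is not metabelian --- its second derived subgroup $\Fn''$ is nontrivial (one can exhibit, \eg, the iterated commutator $[[x_1,x_2],[x_1,x_2^2]]$ as a nontrivial element of $\Fn''$). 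Any such nontrivial element lies in the kernel of $\Phi$, so $\Phi$ is not injective.

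The only subtlety worth double-checking is that $\gen{v}$ really is infinite cyclic: since $v\in\Fnn\setmin\set{\trivial}$ and $\Fnn$ is free (hence torsion-free), $\gen{v}\cong\ZZ$, which is needed to ensure that $\gen{v}\ltimes \Zmm$ is genuinely metabelian (and not just nilpotent-by-abelian). No real obstacle is expected; the whole argument rests on the structural information already extracted in \Cref{rem: hom II}.
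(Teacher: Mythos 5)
Your proof is correct and follows essentially the same route as the paper's: non-surjectivity because $(\Ga)\Phi\pi$ lands in the cyclic subgroup $\gen{v}$ while $\Fnn$ is non-cyclic, and non-injectivity because the image is metabelian (the paper phrases this concretely as ``commutators map into $\Zmm$, hence commutators of commutators map to the identity,'' which is exactly your second-derived-subgroup argument). The only cosmetic difference is that you package the argument via the subgroup $\pi^{-1}(\gen{v})$ and exhibit an explicit nontrivial element of $\Fn''$, whereas the paper leaves that witness implicit.
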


\begin{proof}
Note that if $\Phi$ is of type II, namely $\Phi = \Phi_{v,\vect{r},\vect{s},\matr{Q},\matr{P}}$, then the image of any commutator in~$\Fn$ is abelian;
more precisely,
for every $u,w \in \Fn$, 
\begin{equation}
    (\comm{u}{w}) \Phi \pi
    \,=\,
    (u^{-1} w^{-1} u w) \Phi \pi
    \,=\,
    v ^{-\vect{u} \vect{s}} v^{- \vect{w} \vect{s}}  v^{ \vect{u} \vect{s}}  v^{\vect{w} \vect{s}}
    \,=\,
    \trivial,
\end{equation}
and hence $([u,w])\Phi \in \Zmm$. But, since $\Zmm$ is abelian, this means that the image by $\Phi$ of any commutator of the form $[[u,w][u',w']]$ (\ie any commutator between commutators in $\Fn$) must be trivial, and hence $\Phi$ is necessarily not injective.

Non-surjectivity is obvious from
$((\Ga) \Phi_{u,\matr{Q},\matr{P}}) \pi = \gen{u^{\gcd(\vect{r},\vect{s})}} \neq \Fnn$,
since we are assuming~$n'>1$.
\end{proof}

\begin{prop} \label{prop: injective hom}
Let $\Phi\colon \Ga \to \Gb$ be a homomorphism. Then $\Phi$ is injective if and only if it is of type I,
$\Phi = \Phi_{\varphi,\matr{Q},\matr{P}}$,
with both $\varphi$ and  $\matr{Q}$ injective.
\end{prop}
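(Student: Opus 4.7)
The statement is an `iff', so we split the proof in two directions. For the forward direction, the immediately preceding \Cref{lem: typeII not monos epis} rules out type II homomorphisms, hence we may write $\Phi = \Phi_{\varphi,\matr{Q},\matr{P}}$. Since (as noted after \Cref{cor: homos FABF I}) the restriction of $\Phi$ to $\Zm$ coincides with the homomorphism $\matr{Q}\colon \Zm \to \Zmm$, injectivity of $\Phi$ immediately forces injectivity of $\matr{Q}$. The only real content is then to deduce injectivity of $\varphi$ from that of $\Phi$.

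The idea for this last step is to exploit the fact that, since $n\geq 2$, any non-trivial normal subgroup of $\Fn$ is non-abelian: indeed, a non-trivial cyclic normal subgroup would force $\Fn$ to be virtually cyclic, and non-cyclic subgroups of a free group are non-abelian. Hence, if $\ker\varphi\neq\set{\trivial}$, we can pick $u,v \in \ker\varphi$ with $\comm{u}{v} \neq \trivial$ in $\Fn$; via the embedding $\Fn \hookrightarrow \Ga$ this commutator is non-trivial in $\Ga$ as well. On the other hand, by \Cref{cor: homos FABF I}, $(u)\Phi = \z^{u\bbeta_{\!\Phi}}$ and $(v)\Phi = \z^{v\bbeta_{\!\Phi}}$ both lie in $\Zmm$, so they commute in $\Gb$ and $(\comm{u}{v})\Phi = \trivial$, contradicting injectivity of $\Phi$.

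For the backward direction, assume $\Phi = \Phi_{\varphi,\matr{Q},\matr{P}}$ is of type I with both $\varphi$ and $\matr{Q}$ injective, and take $\fba{u}{a}\in\ker\Phi$. By \Cref{cor: homos FABF I}, $\trivial = (\fba{u}{a})\Phi = u\varphi \cdot \z^{\vect{a}\matr{Q}+u\bbeta_{\!\Phi}}$, which, by the uniqueness of normal forms in $\Gb$, splits into $u\varphi = \trivial$ (free part) and $\vect{a}\matr{Q}+u\bbeta_{\!\Phi} = \vect{0}$ (abelian part). Injectivity of $\varphi$ then yields $u=\trivial$; consequently $u\bbeta_{\!\Phi} = \vect{0}$ by definition of $\bbeta_{\!\Phi}$ on the trivial element, so the abelian equation reduces to $\vect{a}\matr{Q} = \vect{0}$, and injectivity of $\matr{Q}$ finally gives $\vect{a} = \vect{0}$.

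\textbf{Main obstacle.} The only step requiring any genuine structural input is the production of a non-trivial commutator inside $\ker\varphi$ in the forward direction; once the well-known fact that non-trivial normal subgroups of $\Fn$ (with $n\geq 2$) are non-abelian is invoked, everything else amounts to bookkeeping with the explicit image formula provided by \Cref{cor: homos FABF I}.
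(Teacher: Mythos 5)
Your proposal is correct and follows essentially the same route as the paper's proof: ruling out type II via \Cref{lem: typeII not monos epis}, getting injectivity of $\matr{Q}$ from the restriction to $\Zm$, deriving injectivity of $\varphi$ by finding a non-commuting pair in a hypothetical non-trivial $\ker\varphi$ (a normal, hence non-cyclic, subgroup of $\Fn$) whose commutator dies under $\Phi$, and checking the converse by splitting the normal form of $(\fba{u}{a})\Phi$ into free and abelian parts. The only cosmetic difference is that you justify non-abelianness of $\ker\varphi$ via virtual cyclicity while the paper simply cites non-cyclicity of non-trivial normal subgroups of $\Fn$; both are standard.
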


\begin{proof}
$[\Rightarrow]$ From \Cref{lem: typeII not monos epis} it is immediate that injective homomorphisms must be of type~I, \ie of the form $\Phi = \Phi_{\varphi,\matr{Q},\matr{P}}$. Moreover, 
if $\Phi$ is injective, then its restriction to the abelian part, an hence $\matr{Q}$, must be injective as well.
Now suppose that $\varphi$ is not injective; that is, $\Trivial \neq \ker \varphi \normaleq \Fnn$. Since $n'\geq 2$ (by assumption), $\ker \varphi$ is not cyclic and hence there exist non-commuting elements $w,v \in \ker \varphi$. But
it is easy to see that $(\ker \varphi)\Phi \subseteq \Zmm$, which is abelian. Therefore $([w,v])\Phi = \trivial$
and $\Phi$ is not injective, in contradiction with the hypothesis. Therefore both $\matr{Q}$ and $\varphi$ must be injective, as claimed.

$[\Leftarrow]$ Conversely, suppose that $\Phi = \Phi_{\varphi,\matr{Q},\matr{P}}$ with both $\varphi$ and $\matr{Q}$ injective, and recall that
$(u \t^{\vect{a}}) \Phi = 
u \varphi \, \z^{\, \vect{a} \matr{Q} \,+\, u\bbeta_{\!\Phi}}$.
Now, if $(u \t^{\vect{a}})\Phi = \trivial$ then both the projection to the free and to the free-abelian part must be trivial. That is, on one side  $(u \t^{\vect{a}}) \Phi \pi = u\varphi = \trivial$ and hence $u = \trivial$
(since $\varphi$ is injective); and on the other side, using that $u = \trivial$, we have that
$(u \t^{\vect{a}}) \Phi \tau =
\vect{a} \matr{Q} \,+\, u  \bbeta_{\!\Phi}
= \vect{a} \matr{Q} = \vect{0}$, and hence $\vect{a} = \vect{0}$ (since $\matr{Q}$ is injective). Therefore $\Phi$ is injective and the proof is complete.
\end{proof}

\begin{prop} \label{prop: bijective hom}
Let $\Phi\colon \Ga \to \Gb$ be a homomorphism. Then $\Phi$ is bijective if and only if it is of type I,
$\Phi = \Phi_{\varphi,\matr{Q},\matr{P}}$,
with both $\varphi$ and  $\matr{Q}$ bijective.
\end{prop}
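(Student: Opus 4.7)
The plan is to reduce everything to the explicit formula
$(u \t^{\vect{a}})\Phi = u\varphi \, \z^{\vect{a}\matr{Q} + u\bbeta_{\!\Phi}}$
from \Cref{cor: homos FABF I}, and then to argue one coordinate at a time using that $\pi$ and $\tau$ respectively isolate the free and free-abelian contributions. The previous proposition \Cref{prop: injective hom} already handles the injective part of both implications, so the only real work is (i) deducing surjectivity of $\varphi$ and $\matr{Q}$ from surjectivity of $\Phi$, and (ii) building a preimage under $\Phi$ out of preimages under $\varphi$ and $\matr{Q}$.

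For the forward implication, I would start by noting that a bijective $\Phi$ is in particular injective, so by \Cref{prop: injective hom} it must be of type I, $\Phi = \Phi_{\varphi,\matr{Q},\matr{P}}$, with $\varphi$ and $\matr{Q}$ already injective. Surjectivity of $\varphi$ follows by post-composing $\Phi$ with the projection $\pi\colon \Gb \to \Fnn$: indeed \eqref{eq: EndI pi} gives $\Phi\pi = \pi\varphi$, and since both $\Phi$ and $\pi$ are surjective, so is $\varphi$. For $\matr{Q}$, given $\z^{\vect{b}} \in \Zmm$ I would pick any preimage $\fba{u}{a} \in \Ga$ under $\Phi$; projecting to the free part via $\pi$ yields $u\varphi = \trivial$, so $u = \trivial$ by injectivity of $\varphi$, and the formula then reduces to $\vect{a}\matr{Q} = \vect{b}$, giving the required preimage for $\matr{Q}$.

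The backward implication is essentially a direct construction. Injectivity is immediate from \Cref{prop: injective hom}. For surjectivity, given an arbitrary element $v\,\z^{\vect{b}} \in \Gb$, I would first use surjectivity of $\varphi$ to choose $u \in \Fn$ with $u\varphi = v$; then, using surjectivity of $\matr{Q}$, choose $\vect{a} \in \Zm$ with $\vect{a}\matr{Q} = \vect{b} - u\bbeta_{\!\Phi}$. Plugging into the formula yields $(\fba{u}{a})\Phi = v\,\z^{\vect{b}}$, as required.

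I don't foresee a genuine obstacle here: the explicit triangular shape of the formula in \Cref{cor: homos FABF I} decouples the free and free-abelian parts in the precise way needed for both implications. The only mild subtlety to keep in mind is that, in the forward direction, recovering surjectivity of $\matr{Q}$ from surjectivity of $\Phi$ requires injectivity of $\varphi$ to force $u = \trivial$ when the target lies in $\Zmm$; this is why it is convenient to dispose of the injective case first via \Cref{prop: injective hom} before tackling surjectivity.
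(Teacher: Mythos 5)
Your proposal is correct and follows essentially the same route as the paper: both directions reduce to \Cref{prop: injective hom} plus the explicit formula of \Cref{cor: homos FABF I}, with surjectivity of $\varphi$ read off from $\Phi\pi$, surjectivity of $\matr{Q}$ recovered from preimages of $\Zmm$ (using injectivity of $\varphi$ to force the free part to be trivial), and the converse handled by the same explicit preimage $w = v\varphi^{-1}$, $\vect{c} = (\vect{b}-w\bbeta_{\!\Phi})\matr{Q}^{-1}$. No gaps.
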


\begin{proof}
$[\Rightarrow]$
If $\Phi$ is bijective then it is clear (from \Cref{prop: injective hom}) that both $\varphi$ and $\matr{Q}$ must be injective. On the other hand, since $\Phi$ is surjective
$\Fnn = (\Ga) \Phi \pi = (\Fn) \varphi $, and therefore $\varphi$ is surjective, and hence bijective. 
Then, the image under $\Phi$ of any non-abelian element $u \t^{\vect{a}} \in \Ga \setmin \ZZ^m$ is necessarily non-abelian. Therefore, $\im \Phi \cap \Zmm = \im \Phi_{| \Zm} \cap \Zmm = \im \Phi_{|\Zm} = \im \matr{Q}$. Since $\Phi$ is surjective, then $\im \matr{Q} = \Zmm$ and therefore $\matr{Q}$ is also surjective, and hence bijective.

$[\Leftarrow]$ From \Cref{prop: injective hom} it is enough to see that if $\varphi$ and $\matr{Q}$ are bijective, then $\Phi = \Phi_{\varphi,\matr{Q},\matr{P}}$ is surjective. But this is clear since for every $v \, \z^{\vect{b}} \in \Gb$, putting $w=v\varphi^{-1}$ and $\vect{c}=(\vect{b}-w\Phi\tau)\matr{Q}^{-1}$, 
we have that $(w\ta{\vect{c}})\Phi=v\z^{\vect{b}}.$
\end{proof}

The following is an immediate consequence of \Cref{prop: injective hom}, \Cref{prop: bijective hom} and the fact that free groups and free-abelian groups are Hopfian.
\begin{prop} \label{prop: autos}
Let $\Phi$ be an endomorphism of $\Ga$. Then, the following statements are equivalent: 
\begin{enumerate}[ind]
    \item $\Phi$ is bijective (\ie $\Phi$ is an automorphism of $\Ga$)
    \item $\Phi$ is onto (\ie $\Phi$ is an epimorphism of $\Ga$);
    \item \label{item: auto description} $\Phi$ is of type I, $\Phi = \Phi_{\varphi,\matr{Q},\matr{P}}$, with with both $\varphi$ and $\matr{Q}$ surjective (and hence bijective). 
\end{enumerate}
\end{prop}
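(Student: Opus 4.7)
The plan is to prove the cycle of implications (i) $\Rightarrow$ (ii) $\Rightarrow$ (iii) $\Rightarrow$ (i). The implication (i) $\Rightarrow$ (ii) is immediate from the definition.

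For (ii) $\Rightarrow$ (iii), the first step is to apply \Cref{lem: typeII not monos epis}, which rules out type II endomorphisms as epimorphisms; hence any surjective $\Phi$ must be of type I, so $\Phi = \Phi_{\varphi,\matr{Q},\matr{P}}$. Then, using \Cref{cor: homos FABF I} (specifically \Cref{eq: EndI pi}) we have $(\Ga)\Phi\pi = (\Fn)\varphi$; projecting the surjection $\Phi$ onto the free part via $\pi$, this forces $(\Fn)\varphi = \Fn$, so $\varphi$ is surjective. For $\matr{Q}$, I would use essentially the same argument as in the forward direction of \Cref{prop: bijective hom}: since under a type I homomorphism the image of any non-abelian element of $\Ga$ has nontrivial free part (and the image of an abelian element lies in $\Zm$ and equals $\vect{a}\matr{Q}$), we get $\im \Phi \cap \Zm = \im \matr{Q}$; surjectivity of $\Phi$ thus implies $\im \matr{Q} = \Zm$, so $\matr{Q}$ is surjective as well.

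For (iii) $\Rightarrow$ (i), I would invoke the Hopfianity of the factors: $\Fn$ and $\Zm$ are both Hopfian, so surjective endomorphisms of them are automatically bijective. Hence $\varphi \in \Aut(\Fn)$ and $\matr{Q} \in \GL_m(\ZZ)$, and \Cref{prop: bijective hom} directly yields that $\Phi$ is bijective.

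There is no real obstacle here: the statement is essentially a repackaging of \Cref{prop: injective hom} and \Cref{prop: bijective hom}, combined with Hopfianity of the free and free-abelian factors. The only mildly delicate point is the surjectivity of $\matr{Q}$ in the step (ii) $\Rightarrow$ (iii), which hinges on the fact that a type I homomorphism preserves the abelian/non-abelian dichotomy on elements (so the preimage of $\Zm$ under $\Phi$ lies inside $\Zm$), and this identification is exactly the content extracted from the proof of \Cref{prop: bijective hom}.
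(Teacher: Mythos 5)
Your argument is correct and is essentially the paper's: the authors state \Cref{prop: autos} as an immediate consequence of \Cref{prop: injective hom}, \Cref{prop: bijective hom} and the Hopfianity of $\Fn$ and $\Zm$, and your cycle of implications (using \Cref{lem: typeII not monos epis} to exclude type II and then re-running the forward argument of \Cref{prop: bijective hom}) is exactly the natural unpacking of that remark. One small point to tighten in (ii) $\Rightarrow$ (iii): the assertion that a type I homomorphism sends non-abelian elements to elements with nontrivial free part is \emph{not} true for arbitrary type I homomorphisms (if $\trivial \neq u \in \ker\varphi$ then $(u\,\t^{\vect{a}})\Phi$ lies in the abelian part, and its abelian coordinate need not lie in $\im\matr{Q}$), so you should invoke Hopfianity of $\Fn$ already at this stage --- $\varphi$ surjective forces $\varphi$ bijective --- before concluding $\im \Phi \cap \Zm = \im \matr{Q}$; this is harmless here, and matches the proof of \Cref{prop: bijective hom}, where injectivity of $\varphi$ is available from the bijectivity hypothesis.
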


\begin{cor}
FABF groups $\Ga = \ABFa$ are Hopfian and not coHopfian.
\end{cor}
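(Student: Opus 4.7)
The statement has two parts. For Hopfianity, the plan is to invoke \Cref{prop: autos} directly: every surjective endomorphism of $\Ga$ is bijective, so $\Ga$ is Hopfian by definition. There is nothing more to do for this half.

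For non-coHopfianity, the plan is to exhibit an explicit injective but non-surjective endomorphism of $\Ga$ using the classification given by \Cref{thm: homos FABF} together with the characterizations in \Cref{prop: injective hom,prop: bijective hom}. Concretely, I would take $\varphi = \id_{\Fn}$, $\matr{Q} = k \matr{I}_m$ for some integer $k$ with $|k| \geq 2$, and $\matr{P} = \matr{0}$, and claim that $\Phi = \Phi_{\id, k \matr{I}_m, \matr{0}}$ is the desired endomorphism.

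First I would verify that this assignment really defines an endomorphism of type~I by checking the compatibility condition \eqref{eq: cond_I}: since $\matr{Q} = k\matr{I}_m$ is a scalar matrix it commutes with every $\matr{A_i} \in \GL_m(\ZZ)$, so
\begin{equation*}
    \matr{Q}\, \matr{B}_{x_i \varphi} \,=\, (k\matr{I}_m)\, \matr{A_i} \,=\, \matr{A_i}\, (k\matr{I}_m) \,=\, \matr{A_i}\, \matr{Q}
\end{equation*}
for every $i \in [1,n]$ (here $\Gb = \Ga$ and $\varphi = \id$, so $\matr{B}_{x_i\varphi} = \matr{A_i}$). Then \Cref{prop: injective hom} applies because both $\id_{\Fn}$ and the multiplication-by-$k$ map $k\matr{I}_m \colon \Zm \to \Zm$ are injective; hence $\Phi$ is injective. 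On the other hand $k\matr{I}_m$ is not surjective on $\Zm$ when $|k|\geq 2$, so by \Cref{prop: bijective hom} $\Phi$ is not an automorphism and therefore (since it is an endomorphism) not surjective.

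There is no real obstacle here: once the homomorphism classification of \Cref{thm: homos FABF} and the injectivity/surjectivity criteria have been established, both Hopfianity and non-coHopfianity reduce to one-line verifications. The only point that needs a moment of care is making sure the compatibility condition \eqref{eq: cond_I} is satisfied by the chosen $\matr{Q}$, which is immediate for scalar multiples of the identity since they lie in the center of $\GL_m(\ZZ)$.
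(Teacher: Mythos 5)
Your proof is correct and follows essentially the same route as the paper, which derives Hopfianity from \Cref{prop: bijective hom} and non-coHopfianity from \Cref{prop: injective hom,prop: bijective hom} together with the non-coHopfianity of $\ZZ^m$. Your explicit choice $\matr{Q}=k\matr{I}_m$ with the verification of the compatibility condition \eqref{eq: cond_I} --- which an arbitrary injective non-surjective endomorphism of $\ZZ^m$ need \emph{not} satisfy --- is a worthwhile detail that the paper's one-line argument leaves implicit.
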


\begin{proof}
The Hopfianity
of $\Ga$ is immediate from \Cref{prop: bijective hom}, whereas non-coHopfianity follows from \Cref{prop: injective hom,prop: bijective hom} and the non-coHopfianity of $\ZZ^m$ (or $\Fn$).
\end{proof}

\section{The Isomorphism Problem} \label{sec: IP}


Let $\mathcal{P}$ be a family of finite presentations, and let us denote by $\gen{P}$ the group presented by a presentation $P = (X\mid R)$; that is, $\gen{P} = \pres{X}{R} = \Free[X]/\normalcl{R}$.

\begin{named}[(Group) Isomorphism problem within $\mathcal{P}$, $\IP(\mathcal{P})$]
Decide, given $P_1,P_2 \in \mathcal{P}$, whether $\gen{P_1} \isom \gen{P_2}$.
\end{named}

Accordingly, if $\mathcal{G}$ is a family of finitely presented groups, the \defin{isomorphism problem for~$\mathcal{G}$}, denoted by $\IP(\mathcal{G})$, consists of deciding, given two arbitrary finite presentations of groups in~$\mathcal{G}$, whether they present isomorphic groups.
\begin{equation}
\mathsf{\IP(\mathcal{G})} \ \equiv\ 
\gen{P_1} \isom \gen{P_2}\?_{P_1,P_2 \text{ f.p.~of groups in } \mathcal{G}}
\end{equation}

A standard argument using Tietze transformations justifies that, in fact,
it is enough to restrict the inputs of $\IP(\mathcal{G})$  to any recursively enumerable family $\mathcal{P}$ of finite presentations covering $\mathcal{G}$ (\ie such that $\set{\gen{P} \st P \in \mathcal{P}} = \mathcal{G}$).

\begin{lem}\label{lem: tietze transformations}
Let $\mathcal{G}$ be a family of finitely presented groups, and let $\mathcal{P}$ be a recursively enumerable family of finite presentations covering $\mathcal{G}$. Then, given a finite presentation $Q$ of a group in $\mathcal{G}$, a presentation $P \in \mathcal{P}$ such that $\gen{P} \isom \gen{Q}$ is computable.
\end{lem}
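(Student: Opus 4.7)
The plan is to invoke Tietze's theorem: two finite presentations define isomorphic groups if and only if they are related by a finite sequence of the four elementary Tietze transformations (adding/removing a generator together with a defining word; adding/removing a relator that is a consequence of the others). From this, I would first argue that the collection $\mathcal{T}(Q)$ of finite presentations reachable from $Q$ by some finite sequence of such moves is recursively enumerable. Indeed, each elementary move is specified by finitely much effectively describable data (a new generator symbol plus a word in the current alphabet, or a consequence relator together with a derivation, etc.), and the set of consequences of a finite relator set is itself recursively enumerable (enumerate all finite products of conjugates of the relators and their inverses). A standard dovetailing over all finite sequences of such specifications then produces the enumeration of $\mathcal{T}(Q)$.

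Next, I would run the enumeration of $\mathcal{T}(Q)$ in parallel with an enumeration of $\mathcal{P}$ (which exists by hypothesis), and output the first presentation $P \in \mathcal{P}$ that appears, as a literal string, in $\mathcal{T}(Q)$. Termination is guaranteed: since $\mathcal{P}$ covers $\mathcal{G}$ and $\gen{Q} \in \mathcal{G}$, some $P \in \mathcal{P}$ satisfies $\gen{P} \isom \gen{Q}$, and by Tietze's theorem this particular $P$ eventually appears in $\mathcal{T}(Q)$. The output is then the required presentation.

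There is no deep obstacle in this argument; it is a clean dovetailing. The only nuance worth flagging is that the matching step relies on literal string-equality of presentations rather than group-theoretic isomorphism — which is precisely what Tietze-reachability delivers, and which sidesteps the fact that isomorphism between arbitrary finite presentations is undecidable in general.
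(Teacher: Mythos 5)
Your proof is correct and follows essentially the same strategy as the paper's: enumerate all finite presentations of $\gen{Q}$ via Tietze transformations, dovetail against an enumeration of $\mathcal{P}$, and detect the guaranteed literal match. The only difference is presentational — you make the recursive enumerability of the Tietze-reachable set and the dovetailing slightly more explicit — but the argument is the same.
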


\begin{proof}
Given a finite presentation $Q$ of a group in $\mathcal{G}$, we can start a diagonal process successively applying to $Q$ all the possible sequences of Tietze transformations, thus generating an enumeration $(Q_n)_{n\geq 0}$ of all the finite presentations of $\gen{Q}$. Since this process will potentially reach every finite presentation for $\gen{Q}$, and $\mathcal{P}$ covers $\mathcal{G} \ni \gen{Q}$, at some point it will necessarily reach a presentation $Q_n \in \mathcal{P}$ (such that $\gen{Q_n}\isom \gen{Q}$). Finally, we can algorithmically detect this match since, as $\mathcal{P}$ is recursively enumerable, for every presentation $Q_n$ obtained, we can produce an enumeration $(P_m)_{m \geq 0}$ of $\mathcal{P}$, and keep checking whether $P_m = Q_n$ until a guaranteed match happens.
\end{proof}

A fundamental result in algorithmic group theory (answering the third of Dehn's seminal problems in \cite{dehn_uber_1911}) is that the most general version of \IP\ (corresponding to $\mathcal{P}$ being the whole family of finite presentations) is algorithmically undecidable: there is no algorithm to decide whether two given finite presentations present isomorphic groups. Subsequently, restrictions of \IP\ to many different families of groups have been considered with both positive (\eg hyperbolic groups) and negative (\eg solvable groups) results.

In the unpublished paper \cite{levitt_unsolvability_2008}, \citeauthor{levitt_unsolvability_2008}, inspired by the ideas that led to the undecidability of the conjugacy problem in \cite{bogopolski_orbit_2010}, uses a brief and smart argument to prove the undecidability of the isomorphism problem for free-abelian-by-free groups.

\begin{thm}[\citeny{levitt_unsolvability_2008}]
The isomorphism problem $\IP(\FABF)$ is undecidable. \qed
\end{thm}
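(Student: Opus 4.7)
My plan is to reduce an undecidable problem to $\IP(\FABF)$. A natural candidate is the conjugacy problem $\CP(\FABF)$, which \citeauthor{bogopolski_orbit_2010} showed undecidable in \cite{bogopolski_orbit_2010}. Fix a FABF group $G_{0} = \Fn \ltimes_{\acti} \Zm$ with undecidable conjugacy problem. Given a pair $g, h \in G_{0}$, I would construct, computably from $g$, a FABF group $\mathbb{G}(g)$ such that
\begin{equation*}
\mathbb{G}(g) \isom \mathbb{G}(h) \ \Longleftrightarrow\ g \sim_{G_{0}} h.
\end{equation*}
Any algorithm solving $\IP(\FABF)$ would then solve $\CP(G_{0})$, a contradiction.

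The construction should ``tag'' $g$ inside a larger FABF group in a way that is rigid under isomorphisms. A concrete candidate is to extend $G_{0}$ by one new free generator $y$, namely $\mathbb{G}(g) = F_{n+1} \ltimes_{\acti_g} \ZZ^{M}$ for some $M \geq m$, where $\acti_g$ extends $\acti$ and sends $y$ to a matrix that records, in a distinguished coordinate block, both the action matrix $\matr{A}_{\pi(g)}$ and the abelian coordinate $\tau(g)$. For the easy direction: if $c^{-1} g c = h$ in $G_{0}$, the inner automorphism $\iota_c$ yields a $\varphi_0 \in \Aut(\Fn)$ and $\matr{Q}_0 \in \GL_m(\ZZ)$ satisfying $\matr{Q}_0\, \matr{B}_{x_i\varphi_0} = \matr{A}_i\, \matr{Q}_0$ for every $i$; extending $\varphi_0$ to fix $y$ and $\matr{Q}_0$ to an obvious block matrix on $\ZZ^M$ then produces, via \Cref{prop: bijective hom}, the required isomorphism $\mathbb{G}(g) \to \mathbb{G}(h)$.

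The converse — that any isomorphism $\Phi \colon \mathbb{G}(g) \to \mathbb{G}(h)$ implies $g \sim_{G_{0}} h$ — is the main obstacle. By \Cref{prop: bijective hom}, every such $\Phi$ is of type I, $\Phi = \Phi_{\varphi, \matr{Q}, \matr{P}}$, with $\varphi \in \Aut(F_{n+1})$ and $\matr{Q} \in \GL_M(\ZZ)$ obeying the intertwining relations $\matr{Q}\, \matr{B}_{x_i\varphi} = \matr{A}_i\, \matr{Q}$. The technical core is to show that the rigidity built into the tagging forces $\varphi$ to essentially preserve the subgroup $\Fn \leq F_{n+1}$ and to fix $y$ up to inner automorphisms, while $\matr{Q}$ respects the distinguished block decomposition; then the restriction of $\Phi$ to $G_{0}$ supplies an automorphism of $G_{0}$ mapping $g$ to an element conjugate to $h$. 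The main difficulty is therefore the design of $\acti_g$: it must be rich enough to prevent ``spurious'' isomorphisms — typically by choosing the extra matrix so that, together with $\matr{A}_{\acti(\Fn)}$, it generates a subgroup of $\GL_M(\ZZ)$ whose normalizer is tightly controlled, leaving $\varphi$ and $\matr{Q}$ no choice but to take the desired rigid form. I expect this rigidification step to require most of the work, and the ``brief and smart argument'' attributed to \citeauthor{levitt_unsolvability_2008} likely corresponds to a particularly economical way of achieving it.
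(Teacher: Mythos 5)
There is a genuine gap: your proposal is a reduction \emph{plan}, not a proof, and the step that carries all the logical weight --- showing that \emph{every} isomorphism $\mathbb{G}(g)\to\mathbb{G}(h)$ forces $g\sim_{G_0}h$ --- is explicitly deferred (``the main obstacle'', ``I expect this rigidification step to require most of the work''). Nothing in the sketch specifies the tag matrix for $y$, verifies that it can be chosen unimodular while encoding the abelian coordinate $\tau(g)$, or rules out spurious isomorphisms coming from automorphisms of $F_{n+1}$ that mix $y$ with the old generators, or from matrices $\matr{Q}\in\GL_M(\ZZ)$ that do not respect your block decomposition. By \Cref{thm: isom FABF}, an isomorphism only constrains $(\varphi,\matr{Q})$ through the intertwining relations, and a priori the normalizer of the image of $\acti_g$ in $\GL_M(\ZZ)$ can be large; controlling it is precisely the content of the missing argument. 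Even the ``easy'' direction is not checked at the one point where it is nontrivial: after extending $\varphi_0$ to fix $y$, you still need the intertwining relation for the \emph{new} generator, i.e.\ that your extended $\matr{Q}_0$ conjugates the tag of $h$ to the tag of $g$, and that is exactly where the dependence on $g$ versus $h$ enters.

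It is also worth noting that the route you propose differs from the one the result actually follows. The paper does not prove this theorem; it cites Levitt's note, and its own \Cref{thm: isom FABF} and \Cref{cor: und cond} locate the undecidability in the matrix-side condition ``$\exists\varphi\in\Aut(\Fn)$ and $\matr{Q}\in\GL_m(\ZZ)$ with $\varphi\actii=\acti\inn[\matr{Q}]$''. The known argument reduces from orbit undecidability of suitable finitely generated subgroups of $\GL_m(\ZZ)$ (the same Bogopolski--Martino--Ventura source that gives undecidability of $\CP(\FABF)$), i.e.\ it varies the \emph{actions} $\acti,\actii$ as tuples of matrices rather than tagging a single pair of elements $g,h$ of a fixed FABF group. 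Your per-element gadget is not obviously wrong, but as written it asserts rather than establishes the rigidity on which the whole reduction depends, so it cannot be accepted as a proof.
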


The purpose of this section is to investigate possible \IP-decidable subfamilies of \FABF\ groups.
Natural candidates are \FABF\ groups given by actions $\acti\colon \Fn \to \GL_m(\ZZ)$ with finite generated kernel; namely, when the $\acti$ is either injective (\aka \defin{faithful actions}) or has finite image (\aka \emph{finite actions}). In the case of finite actions, we are able to prove decidability of the $\IP$, and in the case of faithful actions, we see that the $\IP$ is Turing-equivalent to some constrained subgroup conjugacy problem in $\GLm$. We ignore whether this last problem is decidable or not.

Since (by definition) the family of presentations \eqref{eq: presentation ABF} (or \eqref{eq: presentation ABF2}) covers the family of FABF groups, we can assume that the inputs of $\IP(\FABF)$ are in this form; or equivalently, that the inputs are $n, n'\geq2$, $m,m'\geq 1$, and matrices $\matr{A_1}, \ldots \matr{A_n} \in \GL_m(\ZZ)$ and $\matr{B_1}, \ldots,\matr{B_{n\!'}} \in \GL_{m\!'}(\ZZ)$.
That is,
\begin{equation}\label{eq: IP FABF inputs}
\mathsf{\IP(\FABF)} \ \equiv\ 
\Ga \isom \Gb\?_{\subalign{
&n\geq 2 \,,\, m\geq 1 \,,\, \matr{A_1}, \ldots \matr{A_n} \in \,\GL_m(\ZZ)\\
&n'\geq 2 \,,\, m'\geq 1 \,,\, \matr{B_1}, \ldots \matr{B_{n'}} \in \,\GL_{m'}(\ZZ)
}}
\end{equation}


From \Cref{thm: homos FABF} and \Cref{prop: bijective hom} it follows that the isomorphisms $\Phi \colon \Ga \to \Gb$ are of the form $\Phi_{\varphi,\matr{Q},\matr{P}}$ (\ie of type I), where both $\varphi\colon \Fn \to \Fnn$ and $\matr{Q}\colon \Zm \to \Zmm$ are isomorphisms. In particular this implies that $n=n'$ and $m=m'$. 

Moreover, taking advantage of the invertibility of $\matr{Q}$,
the conditions in \eqref{eq: conj.ab.I} can be rewritten for isomorphisms as
\begin{equation}\label{eq: conj ab aut}
\begin{aligned}
    \matr{B}_{x_i\!\varphi} &\,=\, \matr{Q}^{-1} \matr{A_i} \, \matr{Q} \quad \text{for all } i\in[1,n] 
\end{aligned} \tag{\texttt{conj.ab.aut}}
\end{equation} 
This kind of condition appears in the literature under the name of \emph{multiple conjugacy}. We state below its formal definition and the natural decision problem involving it.

\begin{defn}
    Let $G$ be a group, let $n\geq 2$ and let $\overline{g}=(g_1,\ldots,g_n),\overline{h}= (h_1,\ldots , h_n) \in G^n$ two $n$-tuples of elements in $G$. 
    We say that $\overline{g}$ and $\overline{h}$ are multiply conjugate, and we write $\overline{g} \rconj \overline{h}$, if there exists some (common) $x \in G$, such that $x^{-1} g_i x = h_i$ for each $i \in [1,n]$. We write $\overline{g} \conj_x \overline{h}$ if we want to emphasize the \defin{multiple conjugate} $x$.
\end{defn}

\begin{named}[Multiple conjugacy problem for $G$, $\MCP(G)$]
Decide, given $n\geq 1$, and $\overline{g}=(g_1,\ldots,g_n),\overline{h}= (h_1,\ldots , h_n) \in G^n$, whether $\overline{g} \rconj \overline{h}$.
\end{named}

With the abuse of language  $\acti = (\matr{A_1},\ldots,\matr{A_n}), \varphi\actii=(\matr{B}_{x_1 \!\varphi},\ldots,\matr{B}_{x_n \!\varphi})$ (identifying the actions with the tuples of matrices defining them) the previous considerations lead to a concise description of the condition for two \FABF\ groups to be isomorphic. 


\begin{thm} \label{thm: isom FABF}
The groups $\Ga = \ABFa$ and $\Gb = \Free[n'] \ltimes_{\actii} \ZZ^{m'}$ are isomorphic
if and only if $n=n'$, $m=m'$, and there exists $\varphi\in \Aut(\Fn)$ such that $\varphi \actii$ and  $\acti$ are rigidly conjugate. 
Briefly,
\begin{empheq}[left={
\ABFa \,\isom\, \ABFbb
\ \Leftrightarrow \ \empheqlbrace}]{align}
      & \, n = n',\ m=m' \label{eq: IP cond1}\\
      & \, \exists \varphi\in \Aut(\Fn)\, \st \varphi \actii \rconj \acti  \label{eq: IP cond2}
\end{empheq}
\end{thm}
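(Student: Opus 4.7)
The plan is to derive this theorem as a straightforward corollary of the homomorphism classification \Cref{thm: homos FABF} together with the bijectivity criterion \Cref{prop: bijective hom}, with the arithmetic condition \eqref{eq: cond_I} reinterpreted as a multiple conjugacy statement.

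For the $(\Rightarrow)$ direction, I would start from an isomorphism $\Phi \colon \Ga \to \Gb$. By \Cref{prop: bijective hom}, $\Phi$ is necessarily of type I, say $\Phi = \Phi_{\varphi,\matr{Q},\matr{P}}$, with both $\varphi \colon \Fn \to \Fnn$ and $\matr{Q} \colon \Zm \to \Zmm$ bijective. Since rank is a complete invariant for finite-rank free and free-abelian groups, the existence of such bijections forces $n = n'$ and $m = m'$, yielding \eqref{eq: IP cond1}. The compatibility condition \eqref{eq: cond_I} then rearranges (using the invertibility of $\matr{Q}$) into $\matr{Q}^{-1} \matr{A_i} \matr{Q} = \matr{B}_{x_i\!\varphi}$ for every $i \in [1,n]$, which is precisely the assertion that $\acti \conj_{\matr{Q}} \varphi\actii$; by symmetry of the relation $\rconj$, this gives \eqref{eq: IP cond2}.

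For the $(\Leftarrow)$ direction, assuming \eqref{eq: IP cond1} and \eqref{eq: IP cond2}, I would pick $\varphi \in \Aut(\Fn)$ and $\matr{R} \in \GLm$ with $\matr{R}^{-1} \matr{B}_{x_i\!\varphi} \matr{R} = \matr{A_i}$ for every $i \in [1,n]$, and set $\matr{Q} = \matr{R}^{-1}$, so that \eqref{eq: cond_I} holds verbatim. Choosing any $\matr{P} \in \mathcal{M}_{n\times m}(\ZZ)$ (say $\matr{P} = \matr{0}$) and defining $\Phi = \Phi_{\varphi,\matr{Q},\matr{P}}$ as in \eqref{eq: type I}, \Cref{thm: homos FABF} guarantees that $\Phi$ is a well-defined homomorphism $\Ga \to \Gb$. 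Since $\varphi$ and $\matr{Q}$ are both bijective, \Cref{prop: bijective hom} immediately upgrades $\Phi$ to an isomorphism, as required.

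The proof is essentially a bookkeeping exercise once the homomorphism description is available; no real obstruction arises. The only point requiring minimal care is matching the ``intertwining'' form of \eqref{eq: cond_I} with the $x^{-1} g_i x = h_i$ normalization in the definition of multiple conjugacy, which is harmless thanks to the symmetry of $\rconj$ (obtained by replacing the conjugator with its inverse).
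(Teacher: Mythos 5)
Your argument is correct and coincides with the paper's own route: the paper derives the theorem directly from \Cref{thm: homos FABF} and \Cref{prop: bijective hom}, noting that isomorphisms must be of type I with $\varphi$ and $\matr{Q}$ bijective (forcing $n=n'$, $m=m'$) and rewriting \eqref{eq: cond_I} as \eqref{eq: conj ab aut}, which is exactly the multiple-conjugacy condition. Your handling of the converse (taking $\matr{P}=\matr{0}$ and inverting the conjugator to match the normalization of $\rconj$) is the same bookkeeping the paper leaves implicit.
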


That is, only FABF groups with the same factor ranks can be isomorphic, and they are so if and only if the condition \eqref{eq: IP cond2} holds for the corresponding ranks. 
Since the ranks $n,m,n',m'$ are computable from any finite presentation, the condition \eqref{eq: IP cond1} is immediately checkable
and we can reduce $\IP(\FABF)$ to the isomorphism problem for FABF groups with he same factor ranks, and hence to condition \eqref{eq: IP cond2}.


\begin{cor} \label{cor: und cond}
    There exist integers $m\geq 1$ and $n \geq 2$ such that, given $\acti = (\matr{A_1},\ldots,\matr{A_n})$ and
    $\actii = (\matr{B_1},\ldots,\matr{B_n})$ $n$-tuples of matrices in $\GL_m(\ZZ)$, it is algorithmically undecidable whether there exists $\varphi \in \Aut(\Fn)$ such that $\varphi\actii \rconj \acti$; or equivalently, whether there exists 
 $\varphi \in \Aut(\Fn)$ and $\matr{Q} \in \GL_m(\ZZ)$ such that $\varphi \actii = \acti \inn[\matr{Q}]$ (see the diagram below).
 \begin{figure}[H]
\begin{equation} \label{eq: FABF IP condition}
\begin{tikzcd}[ampersand replacement=\&]
\Fn \arrow{r}{\acti} \arrow[d,dashed,"\rotatebox{-90}{$\isom$}","\varphi"']\& \GL_m(\ZZ) \arrow[d,dashed,"{\inn[\matr{Q}]}"] \\
\Fn \arrow{r}{\actii} \& \GL_m(\ZZ)
\end{tikzcd}
\end{equation}
 \caption{(Undecidable) isomorphism condition for \FABF\ groups}
 \end{figure}
\end{cor}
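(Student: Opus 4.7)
The plan is to derive this corollary directly from Levitt's undecidability theorem via the characterization in \Cref{thm: isom FABF}.

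First, by \Cref{lem: tietze transformations}, the inputs of $\IP(\FABF)$ may be assumed without loss of generality to be presentations of the standard form~\eqref{eq: presentation ABF2}, encoded as tuples of matrices as in~\eqref{eq: IP FABF inputs}. \Cref{thm: isom FABF} reduces the isomorphism check to verifying that $(n, m) = (n', m')$ and that condition~\eqref{eq: IP cond2} holds. Since rank equality is trivially decidable, the substantive content of $\IP(\FABF)$ is exactly deciding~\eqref{eq: IP cond2} across all admissible $(m, n)$.

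If, for every fixed $m \geq 1$ and $n \geq 2$, condition~\eqref{eq: IP cond2} were uniformly decidable on $n$-tuples of matrices in $\GL_m(\ZZ)$, then dispatching on the ranks $m, n$ (which are directly readable from the standard input format) would yield a uniform algorithm solving~\eqref{eq: IP cond2}, and hence $\IP(\FABF)$, contradicting Levitt's theorem. Consequently, there must exist at least one pair $(m, n)$ witnessing the algorithmic undecidability of~\eqref{eq: IP cond2}, which is the first formulation of the corollary.

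The equivalence with the reformulation involving $\matr{Q}$ is just an unfolding of definitions: asserting $\varphi \actii \rconj \acti$ means precisely that some common $\matr{Q} \in \GL_m(\ZZ)$ conjugates $(x_i \varphi)\actii$ to $(x_i)\acti$ for every $i \in [1, n]$, which is the commutativity of the diagram~\eqref{eq: FABF IP condition}, i.e., $\varphi \actii = \acti \, \inn[\matr{Q}]$. The only point requiring care is the uniformity of the dispatch step, but since reading the ranks from the input is effective and the reduction provided by \Cref{thm: isom FABF} is algorithmic, this is a routine bookkeeping matter rather than a real obstacle.
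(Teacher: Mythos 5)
Your argument is correct and is essentially the paper's own (implicit) justification: the corollary is obtained by contradiction from Levitt's undecidability of $\IP(\FABF)$ together with \Cref{thm: isom FABF}, the decidability of the rank check, and the unfolding of $\rconj$ into the existence of a common conjugator $\matr{Q}$. The uniformity caveat you flag (non-uniform decidability for each fixed $(m,n)$ does not formally assemble into a single algorithm) is genuine but is treated at exactly the same level of informality in the paper's discussion preceding the corollary, which performs the same reduction.
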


Conditions of this kind have already appeared in the literature associated to isomorphisms between specific types of extensions of free groups (see \cite{bogopolski_automorphism_2007,cavallo_algorithmic_2017,ghys_stabilite_1980}). In fact, it is not difficult to see that 
a sufficient condition for two general semidirect products
of the form
${\Fn \ltimes_{\!\acti} G}$ and  ${\Fn \ltimes_{\!\actii} G}$ to be isomorphic is that 
there exists an automorphism $\varphi \in \Aut(\Fn)$ such that~$\acti$ and $\varphi \actii$ are rigidly conjugate in $\Out(G)$. 

\begin{prop} \label{prop: Ga isom Gb suff}
    Let $\Free
     = \pres{X}{-}
    $ be a free group,
    let $G
     = \pres{T}{R}
    $ be an arbitrary group,
    and consider
    the semidirect products
    $
    \Free \ltimes_{\!\acti} G$ and $
    \Free \ltimes_{\!\actii} G$.
    Then, for every triplet 
    $(\chi,\varphi,\theta) \in \Aut(G) \times \Aut(\Free) \times \Hom(\Free,G)$
    such that
    $\chi^{-1} \auti_i \chi = \autii_{x_i\!\varphi}  \gamma_{\!x_i\!\theta}$
    the map
    \begin{equation} \label{eq: Ga isom Gb suff}
    \begin{array}{rcl}
    \Omega \colon \Free \ltimes_{\!\acti} G & \to & \Free \ltimes_{\!\actii} G\\
    T \ni t_j & \mapsto & (t_j)\chi\\
    X \ni x_i & \mapsto & (x_i)\varphi \, (x_i)\theta
    \end{array}
    \end{equation}
    extends to an isomorphism $\Free \ltimes_{\!\acti} G \isom \Free \ltimes_{\!\actii} G$.
    That is, if
\begin{equation} \label{eq: isom suff}
    \exists \varphi \in \Aut(\Fn) \st \acti \rconj \varphi \actii \text{ in }\Out(G)
\end{equation}
then $\Free \ltimes_{\!\acti} G \isom \Free \ltimes_{\!\actii} G$.
\end{prop}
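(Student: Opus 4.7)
My plan is to invoke the universal property of the standard semidirect-product presentation
\[
\Free \ltimes_{\!\acti} G \,=\, \pres{X \cup T}{\,R \,\cup\, \{\,x_i^{-1} t_j x_i = (t_j)\auti_i : i \in [1,n],\, j \in [1,m]\,\}\,}\,:
\]
to show that $\Omega$ extends to a homomorphism it then suffices to verify that the images of the defining relations are relations in $\Free \ltimes_{\!\actii} G$. The relations in $R$ pose no issue, since $\chi \in \Aut(G)$ is a homomorphism and the images of the $t_j$'s are just $(t_j)\chi$. Bijectivity will afterwards follow from the normal form for elements of a semidirect product.

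The crux is to check the action relations $x_i^{-1} t_j x_i = (t_j)\auti_i$. I would compute the conjugate of $(t_j)\Omega = (t_j)\chi$ by $(x_i)\Omega = (x_i)\varphi\cdot(x_i)\theta$ in $\Free \ltimes_{\!\actii} G$, peeling off the two syllables one at a time: first conjugating by $(x_i)\varphi \in \Free$ (which triggers the action $\autii_{(x_i)\varphi}$) and then conjugating by $(x_i)\theta \in G$ (which is the inner automorphism $\gamma_{(x_i)\theta}$ of $G$). This yields
\begin{align*}
((x_i)\Omega)^{-1}\,((t_j)\Omega)\,((x_i)\Omega)
&\,=\, ((x_i)\theta)^{-1}((x_i)\varphi)^{-1}\,((t_j)\chi)\,((x_i)\varphi)((x_i)\theta) \\
&\,=\, (t_j)\bigl(\chi\cdot\autii_{(x_i)\varphi}\cdot\gamma_{(x_i)\theta}\bigr).
\end{align*}
The hypothesis $\chi^{-1}\auti_i\chi = \autii_{(x_i)\varphi}\gamma_{(x_i)\theta}$ rearranges to $\chi\cdot\autii_{(x_i)\varphi}\cdot\gamma_{(x_i)\theta} = \auti_i\cdot\chi$, so the above expression equals $((t_j)\auti_i)\chi = ((t_j)\auti_i)\Omega$, precisely the image of the right-hand side of the conjugation relation. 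Hence $\Omega$ extends to a group homomorphism.

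For bijectivity, I would write every element of $\Free \ltimes_{\!\acti} G$ uniquely as $wg$ with $w \in \Free$ and $g \in G$, so that $(wg)\Omega = (w)\Omega\cdot(g)\chi$. Normalizing $(w)\Omega$ in $\Free \ltimes_{\!\actii} G$ gives $(w)\Omega = (w)\varphi\cdot h_w$ for some uniquely determined $h_w \in G$: indeed, post-composing $\Omega$ with the canonical projection $\Free \ltimes_{\!\actii} G \twoheadrightarrow \Free$ kills $G$, and hence factors through $\Free \ltimes_{\!\acti} G \twoheadrightarrow \Free$ inducing $\varphi$ on the quotient. Therefore $(wg)\Omega = (w)\varphi\cdot h_w\cdot(g)\chi$, and for each target $yh \in \Free \ltimes_{\!\actii} G$ the unique preimage is given by $w = (y)\varphi^{-1}$ and $g = (h_w^{-1}\,h)\chi^{-1}$, using that $\varphi$ and $\chi$ are bijective. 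Hence $\Omega$ is an isomorphism.

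The main subtlety I expect is keeping the right-action conventions straight: the composition $\chi\cdot\autii_{(x_i)\varphi}\cdot\gamma_{(x_i)\theta}$ must be read left-to-right, and the placement of $\chi$ in the hypothesis has to be matched exactly when the conjugations are peeled off one by one inside $\Free \ltimes_{\!\actii} G$. Once these conventions are pinned down, the rest is essentially forced by the defining relations of the two semidirect products.
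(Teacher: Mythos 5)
Your proof is correct and follows essentially the same route as the paper's: verify the relations of the standard presentation (the $R$-relators via $\chi$, the conjugation relators by peeling off the two syllables of $(x_i)\Omega$ and invoking the rearranged hypothesis $\chi\,\autii_{x_i\varphi}\,\gamma_{x_i\theta}=\auti_i\,\chi$), then deduce bijectivity from the normal form $wg$ together with the bijectivity of $\varphi$ and $\chi$. The only cosmetic difference is that the paper exhibits explicit preimages for surjectivity and argues injectivity via trivial kernel, whereas you package both into a single unique-preimage formula.
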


\begin{proof}
    To see that $\Omega$ extends to a homomorphism it is enough to check that all the relators in $\Free \ltimes_{\!\acti} G  =
    \pres{T,X}{R \,, x_i^{-1} t_j x_i = (t_j)\acti_i
    \ \forall x_i \in X}$ are mapped to the trivial element in $\Free \ltimes_{\!\actii} G$. This is clear for all the relators in $R$ (since $\chi$ is an automorphism); whereas for the conjugation relators we have, for every $x_i \in X$
    \[
    \begin{array}{rcl}
        x_i^{-1} \, t_j \, x_i \, ((t_j)\acti_i)^{-1} & \xmapsto{\, \Omega\, }
        & (x_i^{-1})\Omega \ (t_j)\Omega \ (x_i)\Omega \ (((t_j)\acti_i)^{-1})\Omega 
        \,=\, \\
        && ((x_i)\Omega)^{-1} \ (t_j)\Omega \ (x_i)\Omega \ (((t_j)\acti_i)\Omega)^{-1} 
        \,=\, \\
        && ((x_i)\theta)^{-1} \ ((x_i)\varphi)^{-1} \ (t_j)\chi \ (x_i)\varphi \ (x_i)\theta \ ((t_j)\acti_i \chi)^{-1} 
        \,=\, \\
        && ((x_i)\theta)^{-1} \ 
         (t_j)\chi \actii_{x_i\!\varphi} \  
         (x_i)\theta \ 
         ((t_j)\acti_i \chi)^{-1} 
        \,=\, \\
        && (t_j)\chi \, \actii_{x_i\!\varphi} \,  
         \gamma_{\!x_i\!\theta} \ 
         ((t_j)\acti_i \chi)^{-1} 
        \,=\, \trivial \,,
    \end{array}
    \]
    where we have used the conjugacy relation in $\Free \ltimes_{\!\actii} G$ in the third equality, and our hypothesis in the last one.
    The surjectivity of the extended homomorphism $\Omega$ is clear since $(G)\Omega = (G) \chi = G$ and, for every $x_i \in X$, 
    $((x_i\varphi^{-1})(x_i^{-1}\varphi^{-1}\theta\chi^{-1}))\Omega=x_i$.

    Finally, if $(u g) \Omega = u\varphi \, u\theta \, g\chi = \trivial_{\Gb}$, then $u = u\varphi = \trivial_{\Free}$, $g = g \chi = \trivial_{G}$
    and the injectivity of $\Omega$ follows.
\end{proof}
In
\parencite{bogopolski_automorphism_2007,cavallo_algorithmic_2017,ghys_stabilite_1980} situations are considered where the condition \eqref{eq: isom suff} is also necessary (and hence characterizes isomorphy $\Free \ltimes_{\!\acti} G \isom \Free \ltimes_{\!\actii} G$).
The paper \cite{bogopolski_automorphism_2007} deals with free-by-cyclic groups $\ZZ \ltimes \Fn$.
It is shown that condition \eqref{eq: isom suff} is not necessary in general (even for free-by-cyclic groups, as there is a counterexample for $\ZZ \ltimes \Free[3]$) but is true restricted to groups of the form $\ZZ \ltimes \Free[2]$. Note that, in this last case the orbit multiple-conjugacy condition \eqref{eq: isom suff} reduces to finitely many (indeed two) instances of the (simple) conjugacy problem in $\Out(\Free[2]) \isom \GL_2(\ZZ)$, and hence $\IP(\ZZ \ltimes \Free[2])$ is decidable. 


\Cref{thm: isom FABF} shows that FABF abelian-by-free groups is another family where 
condition \eqref{eq: isom suff} (essentially) characterizes isomorphy (note that in this case $\Out(G) = \Aut(G)$ since $G$ is abelian).

\begin{lem}
\label{lem: necessary isom}
If $\Ga \isom \Gb$ then 
\begin{enumerate}[dep]
    \item $(\im \acti) \inn[\matr{Q}] = \im \actii$; hence, $\im(\acti)$ and $\im(\actii)$ are conjugate subgroups in $\GL_m(\ZZ)$;
    \item $(\ker \acti) \varphi = \ker \actii$; hence, $\ker (\acti)$ and $\ker (\actii)$ are isomorphic \emph{by an automorphism of $\Fn$}.
\end{enumerate}
\end{lem}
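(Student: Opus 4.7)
The proof plan is essentially to unwind the isomorphism condition already established in \Cref{thm: isom FABF} (and made explicit in \eqref{eq: conj ab aut} and \eqref{eq: FABF IP condition}), and then read off what it means for the images and kernels of the actions.

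First, since $\Ga \isom \Gb$, \Cref{thm: isom FABF} together with \eqref{eq: conj ab aut} yields an automorphism $\varphi \in \Aut(\Fn)$ and a matrix $\matr{Q} \in \GLm$ such that $\matr{B}_{x_i \varphi} = \matr{Q}^{-1} \matr{A_i} \matr{Q}$ for every $i \in [1,n]$. In the notation of \eqref{eq: FABF IP condition} this says exactly that, as homomorphisms $\Fn \to \GLm$,
\begin{equation*}
    \varphi \actii \,=\, \acti \,\inn[\matr{Q}] \,.
\end{equation*}
Both subsequent items are then routine algebraic consequences of this single identity.

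For item (1), I apply $\im$ to both sides. On the left, $\varphi$ is a bijection of $\Fn$, so $\im(\varphi \actii) = \im(\actii)$. On the right, conjugation by $\matr{Q}$ is an (injective) automorphism of $\GLm$, so $\im(\acti \inn[\matr{Q}]) = (\im \acti)\inn[\matr{Q}]$. Equating gives $(\im \acti)\inn[\matr{Q}] = \im \actii$, which is precisely the statement that $\im \acti$ and $\im \actii$ are conjugate subgroups of $\GLm$ via $\matr{Q}$.

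For item (2), I apply $\ker$ to both sides. Since $\inn[\matr{Q}]$ is injective, $\ker(\acti \inn[\matr{Q}]) = \ker \acti$. On the other hand, $\ker(\varphi \actii) = (\ker \actii)\varphi^{-1}$ because $\varphi$ is bijective. Combining gives $(\ker \actii)\varphi^{-1} = \ker \acti$, i.e.\ $(\ker \acti)\varphi = \ker \actii$, so $\varphi$ restricts to an isomorphism between the two kernels. No step presents a real obstacle; the whole content is that the rigid conjugacy coming from \Cref{thm: isom FABF} conjugates images and carries kernels to kernels.
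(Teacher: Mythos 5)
Your proof is correct and follows exactly the paper's argument: derive the identity $\varphi \actii = \acti \inn[\matr{Q}]$ from the isomorphism characterization and then take images and kernels on both sides, using surjectivity of $\varphi$ and injectivity of $\inn[\matr{Q}]$ to simplify. No differences worth noting.
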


\begin{proof}
It is enough to take the image (\resp the kernel) on both sides of the equality $\varphi \actii = \acti \inn[\matr{Q}]$. Namely, if $\varphi \actii = \acti \inn[\matr{Q}]$ then:
\begin{enumerate*}[dep]
    \item $\im \actii = \im (\varphi \actii) = \im (\acti \inn[\matr{Q}]) = (\im \acti) \inn[\matr{Q}]$, and
    \item $(\ker \actii) \varphi^{-1} = \ker(\varphi \actii) = \ker(\acti \inn[\matr{Q}]) = \ker \acti$.
\end{enumerate*}
\end{proof}

It is also interesting to consider the restrictions of the (undecidable) condition \eqref{eq: FABF IP condition} fixing each one of the variables.

\begin{thm}[\citenr{grunewald_solubility_1979} and \citenr{sarkisjan_conjugacy_1979}]
  The multiple conjugacy problem 
$\MCP(\GL_m(\ZZ))$ is algorithmically decidable. \qed
\end{thm}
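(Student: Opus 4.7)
The plan is to reduce the multiple conjugacy problem in $\GL_m(\ZZ)$ to an effective question about orders in semisimple $\mathbb{Q}$-algebras, which is the classical arithmetic content of both Grunewald's and Sarkisjan's proofs. First I would rewrite the condition $X^{-1} A_i X = B_i$ as the homogeneous linear system $A_i X - X B_i = 0$ in the $m^2$ entries of $X$; the $\mathbb{Q}$-solution space $V \subseteq \mathrm{Mat}_m(\mathbb{Q})$ is then computable by Gaussian elimination. Existence of a rational \emph{invertible} conjugator is decidable by checking whether the determinant, viewed as a polynomial in a chosen basis of $V$, vanishes identically; if it does, no integer conjugator exists and the answer is no, and otherwise one can produce an explicit $X_0 \in V \cap \GL_m(\mathbb{Q})$.

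With such an $X_0$ fixed, every element of $V$ takes the form $Y X_0$ with $Y$ in the common centralizer
\[ Z(\mathbf{A}) \,=\, \{Y \in \mathrm{Mat}_m(\mathbb{Q}) : Y A_i = A_i Y \text{ for all } i\}, \]
a unital $\mathbb{Q}$-subalgebra of $\mathrm{Mat}_m(\mathbb{Q})$ computable from the $A_i$. The integer conjugators correspond to the effectively computable $\mathbb{Z}$-lattice $L = \{Y \in Z(\mathbf{A}) : Y X_0 \in \mathrm{Mat}_m(\ZZ)\}$ subject to the unimodularity constraint $\det(Y X_0) = \pm 1$, which in turn amounts to asking whether $L$—viewed as a fractional right ideal of the order $\mathcal{O} = Z(\mathbf{A}) \cap \mathrm{Mat}_m(\ZZ)$—contains an element of prescribed reduced norm $\pm\det(X_0)^{-1}$.

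The main obstacle lies in this final step: one must invoke the effective theory of $\mathbb{Z}$-orders in semisimple $\mathbb{Q}$-algebras—finiteness and computability of ideal class numbers, effective presentations of the unit group of $\mathcal{O}$, and decidability of norm equations in a maximal order—to enumerate finitely many representatives of the relevant ideal classes and check the constraint on each. Everything up to the reduction is routine linear algebra; the genuine content is the reduction theory for arithmetic groups, carried out (independently) in \cite{grunewald_solubility_1979} and \cite{sarkisjan_conjugacy_1979}, to which we refer for the details.
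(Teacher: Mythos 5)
The paper does not prove this statement: it is imported verbatim from the cited references and closed with a \qed, so there is no internal argument to compare yours against. That said, your sketch correctly reproduces the standard opening of Grunewald's argument --- linearize $X^{-1}\matr{A_i}X=\matr{B_i}$ to $\matr{A_i}X=X\matr{B_i}$, compute the rational solution space, detect an invertible rational solution $X_0$ by testing whether $\det$ vanishes identically on that space, and translate the integral unimodular constraint into a question about a lattice in the centralizer algebra $Z(\mathbf{A})$ --- and this reduction is sound. Two caveats. First, as written this is a reduction rather than a proof: the decisive ingredient (effective ideal-class finiteness, computable unit groups of orders, decidable norm equations) is exactly what you defer back to the same two references, so your argument ultimately has the same logical status as the paper's bare citation. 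Second, the framing in terms of \emph{semisimple} $\mathbb{Q}$-algebras is an oversimplification: the $\mathbb{Q}$-algebra generated by the $\matr{A_i}$ (and hence its centralizer) need not be semisimple, and treating the non-semisimple case is a substantial part of Grunewald's paper; a complete proof cannot restrict to the semisimple situation, nor reduce to it by a routine step.
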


\begin{cor} \label{cor: MCP}
Given $\varphi \in \Aut(\Fn)$, it is algorithmically decidable whether there exists some $\matr{Q} \in \GL_m(\ZZ)$ such that $\varphi \actii = \acti \inn[\matr{Q}]$. 

\end{cor}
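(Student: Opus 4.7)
The plan is to show that, once $\varphi$ is fixed, the problem unfolds into a single instance of the multiple conjugacy problem in $\GL_m(\ZZ)$, whose decidability was just recalled.

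First, I would unpack what $\varphi \actii = \acti \inn[\matr{Q}]$ says at the level of the generators $x_1,\ldots,x_n$ of $\Fn$: the condition is equivalent to
\begin{equation*}
    \matr{B}_{x_i \varphi} \,=\, \matr{Q}^{-1} \, \matr{A_i} \, \matr{Q} \qquad \forall i \in [1,n] \,.
\end{equation*}
In other words, we are asking whether the $n$-tuples $(\matr{A_1},\ldots,\matr{A_n})$ and $(\matr{B}_{x_1\!\varphi},\ldots,\matr{B}_{x_n\!\varphi})$ are rigidly (multiply) conjugate in $\GL_m(\ZZ)$.

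The tuple on the left is part of the input. For the tuple on the right, since $\varphi \in \Aut(\Fn)$ is given, each word $x_i\varphi \in \Fnn$ is computable explicitly, and the matrix $\matr{B}_{x_i \!\varphi}$ is then obtained by evaluating $\actii$ on the letters of~$x_i\varphi$ (\ie multiplying the corresponding $\matr{B_j}^{\pm 1}$ factors, which is a finite computation in $\GL_m(\ZZ)$). Hence both $n$-tuples can be effectively written down.

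At this point the question has been reduced to an instance of $\MCP(\GL_m(\ZZ))$, which is decidable by the theorem of Grunewald and Sarkisjan cited above. There is no real obstacle: the argument is a pure reduction, and its only content is the observation that all the required data ($x_i\varphi$ and then $\matr{B}_{x_i \varphi}$) are computable from the given $\varphi$ and $\actii$.
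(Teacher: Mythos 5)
Your reduction is exactly the one the paper intends: condition \eqref{eq: conj ab aut} already rewrites $\varphi\actii = \acti\inn[\matr{Q}]$ as $\matr{B}_{x_i\varphi} = \matr{Q}^{-1}\matr{A_i}\matr{Q}$ for all $i$, so the corollary is a single, effectively computable instance of $\MCP(\GL_m(\ZZ))$, decidable by the Grunewald--Sarkisjan theorem. Your proposal is correct and matches the paper's (implicit) argument, with the useful extra remark that the tuple $(\matr{B}_{x_1\varphi},\ldots,\matr{B}_{x_n\varphi})$ is computable from $\varphi$ and $\actii$.
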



The restriction obtained by fixing the matrix $\matr{Q}$ seems to be more complicated.

\begin{qst}
    Is it decidable, given $\acti=(\matr{A_1},\ldots,\matr{A_n})$ and $\actii = (\matr{B_1},\ldots, \matr{B_n})$ whether there exists some $\varphi\in \Aut(\Fn)$ such that $\varphi\actii=\acti$?
\end{qst}

\subsection{Finite Actions}
Let $\Fn$ be the free group with basis $X=\set{x_1,\ldots,x_n}$,
let $\matr{A_1}, \ldots, \matr{A_n} \in \GL_m(\ZZ)$, and consider the homomorphism $\acti\colon \Fn \to \GL_m(\ZZ)$ given by $x_i \mapsto \matr{A_i}$ for $i \in [1,n]$. Then we write $\acti = (\matr{A_1},\ldots,\matr{A_n})$. We will now focus in the case where $\im{\acti}$ is finite (we say that the action $\acti$ is finite) and prove that the isomorphism problem is decidable in this case. 


\begin{lem} \label{lem: finite action}
If $\matr{A_1}, \ldots, \matr{A_n} \in \GL_m(\ZZ)$ are given, and the image $\im (\acti) = \gen{\matr{A_1}, \ldots, \matr{A_n}}$ is 
finite, then
\begin{enumerate}[ind]
    \item \label{item: im comp}
    the image $\im(\acti)$ is explicitly computable; more precisely, there exists an algorithm that, given $\matr{A_1}, \ldots, \matr{A_n}$ as input, outputs words $u_1,\ldots, u_p \in \Fn$ such that $\im (\acti) = \set{\matr{A}_{u_1}, \ldots, \matr{A}_{u_p}}$.
    \item \label{item: ker comp}
    the kernel $\ker(\acti)$ is a normal subgroup of finite index of $\Fn$, and hence finitely generated and computable; more precisely, there exists an algorithm that, given $\matr{A_1}, \ldots, \matr{A_n}$ as input, outputs a (finite) basis $\set{v_1,\ldots, v_q}$ for $\ker(\acti)$.
\end{enumerate}
\end{lem}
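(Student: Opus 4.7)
The proof hinges on reducing both tasks to the standard Nielsen--Schreier / Schreier rewriting procedure applied to a finite coset graph, which we build by brute-force enumeration.

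For part~(i), the plan is to enumerate $\im(\acti)$ by a breadth-first search on words. Initialize a list $L = \set{(\matrid, \trivial)}$ of pairs (matrix, word in $\Fn$); then repeatedly, for each pair $(\matr{M}, w) \in L$ and each $i \in [1, n]$, compute $\matr{M}\matr{A_i}$ and, if it is not already the first coordinate of some pair in $L$, append $(\matr{M}\matr{A_i}, w x_i)$ to $L$. By hypothesis, $\im(\acti)$ is finite, so no new elements can be produced after finitely many iterations and the loop halts. The words occurring as second coordinates in the final list give the required enumeration $u_1, \ldots, u_p$ of representatives for $\im(\acti)$. (Since a finite submonoid of a group is automatically a subgroup, positive words suffice; one could equivalently enumerate over $X^\pm$.)

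For part~(ii), note first that since $\acti$ is a homomorphism from $\Fn$ onto a finite group, $\ker(\acti)$ is a normal subgroup of finite index $|\im(\acti)|$ in $\Fn$, and hence, by the Nielsen--Schreier theorem, finitely generated (and free). To compute an explicit basis, I would reinterpret the output of part~(i) as a finite $X$-labeled directed graph $\Gamma$ whose vertices are the elements of $\im(\acti)$, with an edge from $\matr{M}$ to $\matr{M}\matr{A_i}$ labeled by $x_i$ for each vertex $\matr{M}$ and each $i \in [1, n]$. Taking $\matrid$ as basepoint, $\Gamma$ is precisely the Schreier coset graph of $\ker(\acti)$ in $\Fn$ with respect to $X$ (each vertex corresponds to a coset, identified with its image under $\acti$).

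Finally, the Schreier / Stallings method applied to $\Gamma$ yields a basis: pick a spanning tree $T \subseteq \Gamma$ rooted at $\matrid$; for each vertex $\matr{M}$ let $t_{\matr{M}}$ be the word labeling the unique path in $T$ from $\matrid$ to $\matr{M}$; then for every edge of $\Gamma$ not in $T$, going from $\matr{M}$ to $\matr{M}\matr{A_i}$, emit the reduced word $v = t_{\matr{M}} \cdot x_i \cdot t_{\matr{M}\matr{A_i}}^{-1}$. The classical Reidemeister--Schreier theorem guarantees that the finite list $v_1, \ldots, v_q$ of words so produced is a free basis of $\ker(\acti)$, completing the proof. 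No real obstacle is anticipated: the only subtlety is the termination argument in part~(i), which is immediate from the finiteness hypothesis, and the bookkeeping that reconciles the brute-force enumeration with the Schreier coset graph structure.
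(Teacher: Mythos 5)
Your proof is correct and follows essentially the same route as the paper's: enumerate the (finite) submonoid generated by the $\matr{A_i}$ until stabilization to get $\im(\acti)$, then identify the resulting Cayley digraph with the Schreier/Stallings graph of $\ker(\acti)$ and extract a basis via a spanning tree. Your explicit remark that positive words suffice because a finite submonoid of a group is a subgroup is a small clarification the paper leaves implicit, but the argument is the same.
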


\begin{proof}
\ref{item: im comp}
In order to compute all the elements in $\im (\acti) = \gen{\matr{A_1}, \ldots, \matr{A_n}}$, 
let $S_0 = \Trivial \subseteq \Fn$,
and let $S_{k} = S_{k-1} \cdot X$ for $k \geq 1$ until $\bigcup_{i=1}^{k+1} \matr{A}_{S_i} = \bigcup_{i=1}^{k} \matr{A}_{S_i}$, which is guaranteed since $\im(\acti)$ is, by hypothesis, finite. Then $\im(\acti) = \bigcup_{i=1}^{k} \matr{A}_{S_i}$.

\ref{item: ker comp}
From the first isomorphism theorem, it is immediate that $\ker(\acti)$ is normal and of finite index in $\Fn.$ To compute a basis for $\ker(\acti)$, note that the previous procedure allows us to build the Cayley digraph of $\im(\acti)$ with respect to the generating set $\matr{A_1}, 
\ldots,\matr{A_n}$, 
which is isomorphic to the Schreier digraph of $\ker(\acti)$ \wrt $X$, which, in turn (since $\ker(\acti)$ has finite index in $\Fn$), is the Stallings automaton of $\ker(\acti)$ \wrt $X$, from which a (finite) basis for $\ker(\acti)$ is computable using standard arguments.
\end{proof}

\begin{thm} \label{thm: IP finite}
The isomorphism problem for FABF groups with finite action is decidable.
\end{thm}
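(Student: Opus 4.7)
The plan is to reduce the isomorphism test to a finite search, using Theorem~\ref{thm: isom FABF}, the computability of finite images granted by Lemma~\ref{lem: finite action}, and the decidability of multiple conjugacy in $\GL_m(\ZZ)$ (Corollary~\ref{cor: MCP}). Write the inputs as $\acti = (\matr{A_1},\ldots,\matr{A_n})$ and $\actii = (\matr{B_1},\ldots,\matr{B_{n\!'}})$ defining FABF groups $\Ga$ and $\Gb$ with finite actions. First I would dispose of the trivial case by checking $n=n'$ and $m=m'$ directly from the input; if either fails, output \nop. Otherwise, by Theorem~\ref{thm: isom FABF}, $\Ga \isom \Gb$ holds iff there exists $\varphi\in\Aut(\Fn)$ such that the tuple $\varphi\actii = (\matr{B}_{x_1\!\varphi},\ldots,\matr{B}_{x_n\!\varphi})$ is multiply conjugate to $\acti$ in $\GL_m(\ZZ)$, so the task is to decide the existence of such a $\varphi$.

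The key observation is that $H' = \im(\actii)$ is finite by hypothesis and explicitly computable using Lemma~\ref{lem: finite action}, and that for every $\varphi \in \Aut(\Fn)$ the tuple $\varphi\actii$ lies in the finite set $(H')^n$. In fact, as $\varphi$ ranges over $\Aut(\Fn)$, these tuples describe exactly the $\Aut(\Fn)$-orbit $\mathcal{O}$ of $(\matr{B_1},\ldots,\matr{B_n})$ in $\Hom(\Fn,H')$ under the action $\rho \cdot \varphi = \rho \circ \varphi$. Since $\Aut(\Fn)$ is generated by an explicit finite set of Nielsen transformations, I would compute $\mathcal{O}$ by breadth-first search: start with $\{(\matr{B_1},\ldots,\matr{B_n})\}$, iteratively apply each Nielsen generator to every tuple already collected, and halt when no new tuple is produced; termination is forced by $|\mathcal{O}| \leq |H'|^n < \infty$.

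It then suffices to test each tuple $\mathcal{T} \in \mathcal{O}$ for multiple conjugacy with $(\matr{A_1},\ldots,\matr{A_n})$ via Corollary~\ref{cor: MCP}, and to output \yep\ iff some tuple passes. Correctness is immediate from Theorem~\ref{thm: isom FABF}: if $\Ga \isom \Gb$, the witness $\varphi$ produces $\varphi\actii \in \mathcal{O}$ passing the MCP test; conversely every element of $\mathcal{O}$ is by construction of the form $\psi\actii$ for some $\psi \in \Aut(\Fn)$, and the conjugator $\matr{Q}\in\GL_m(\ZZ)$ supplied by MCP then certifies condition~\eqref{eq: IP cond2}. The main conceptual hurdle to flag is that $\Aut(\Fn)$ is infinite for $n\geq 2$; the algorithm nevertheless works because the image of $\Aut(\Fn)$ inside the finite set $\Hom(\Fn, H')$ is finite and effectively enumerable through the Nielsen action, which is exactly what rescues this subfamily from the general undecidability of $\IP(\FABF)$.
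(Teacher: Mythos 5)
Your argument is correct, and it reaches the conclusion by a genuinely different route from the paper. Both proofs begin the same way (check $n=n'$, $m=m'$, then reduce via \Cref{thm: isom FABF} to deciding whether some $\varphi\in\Aut(\Fn)$ makes $\varphi\actii$ multiply conjugate to $\acti$, and both finish with finitely many calls to $\MCP(\GL_m(\ZZ))$ via \Cref{cor: MCP}). The difference is in how the infinite group $\Aut(\Fn)$ is tamed. You observe that every $\varphi\actii$ lives in the finite set $\Hom(\Fn,\im\actii)\cong(\im\actii)^n$, compute $\im\actii$ by \Cref{lem: finite action}, and enumerate the $\Aut(\Fn)$-orbit of $(\matr{B_1},\ldots,\matr{B_n})$ by closing under the Nielsen generators; this is sound (each generator permutes the finite set $\Hom(\Fn,\im\actii)$, so the semigroup closure under the generators already equals the group orbit) and has the virtue of being entirely elementary. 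The paper instead exploits the necessary condition $(\ker\acti)\varphi=\ker\actii$ from \Cref{lem: necessary isom}: it invokes Gersten's solution to the Whitehead problem for subgroups $\WhPs(\Fn)$ to either rule out isomorphism or produce one automorphism $\varphi_0$ carrying $\ker\acti$ to $\ker\actii$, and then observes that all remaining candidates lie in $\stab(\ker\acti)\varphi_0$ and factor through the finite quotient $\Fn/\ker\acti$, again yielding finitely many $\MCP$ instances. Your approach avoids the (nontrivial) Whitehead subgroup machinery altogether at the cost of a brute-force search of size up to $|\im\actii|^n$; the paper's approach front-loads a stronger tool but localizes the search to induced automorphisms of the finite quotient $\Fn/\ker\acti$, which also makes the structural reason for decidability (everything factors through finite groups) more visible. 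Either argument establishes the theorem.
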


\begin{proof}
By \Cref{lem: tietze transformations}, we may assume that we are given a presentation of the form \eqref{eq: presentation ABF}, from which the corresponding $m,n$ and $\acti$ are easily deducible.

Hence, we can assume that integers $n,m, n', m' \geq2$ and matrices $\matr{A_1}, \ldots \matr{A_n} \in \GL_m(\ZZ)$ and $\matr{B_1}, \ldots,\matr{B_{n\!'}} \in \GL_{m\!'}(\ZZ)$ defining finite actions $\acti$ and $\actii$ respectively (\ie such that $\im \acti = \gen{\matr{A_1}, \ldots \matr{A_n}}$ and $\im \actii = \gen{\matr{B_1}, \ldots,\matr{B_{n\!'}}}$ are finite) are given to us. Below we present an algorithm which on these data decides whether $\Ga = \Fn \ltimes_{\acti} \ZZ^m \isom \Free[n\!'] \ltimes_{\actii} \ZZ^{m\!'} = \Gb$.

First of all, according to \Cref{thm: isom FABF}, if $m \neq m'$ or $n \neq n'$, then $\Ga \nisom \Gb$, and the answer is \nop. So, we can assume $m = m'$ and $n = n'$, and so $\Ga \isom \Gb$ if and only if the condition in \Cref{eq: IP cond2} holds, i.e., if:
\begin{equation*} 
\text{there exist $\varphi \in \Aut(\Fn)$ and $\matr{Q} \in \GL_m(\ZZ)$ such that $\varphi \actii = \acti \inn[\matr{Q}]$.}
\end{equation*}


On the other side, \Cref{lem: necessary isom}  also implies that  $(\ker \acti) \varphi = \ker \actii$.
Since both $\ker \acti$ and $\ker \actii$ are finitely generated, checking this necessary condition is an instance of the \emph{Whitehead problem for subgroups in~$\Fn$}, denoted by $\WhPs(\Fn)$, which, on input two finitely generated subgroups $H,K \leqslant \Fn$, decides whether there exists an automorphism $\varphi \in \Aut(\Fn)$ such that $H \varphi = K$, and if so outputs one such automorphism. A variation of the classical Whitehead's peak reduction technique (used to prove the decidability of the homonym problem $\WhP(\Fn)$ for words), allowed Gersten to prove that $\WhPs(\Fn)$ is decidable as well (see~\cite{gersten_whiteheads_1984}).

Hence, if the answer of $\WhPs(\Fn)$ on the input $(\ker \acti, \ker \actii)$ is \nop, then the answer to our problem is \nop\ as well. Otherwise, $\WhPs(\Fn)$ outputs an automorphism $\varphi_0 \in \Aut(\Fn)$ such that $(\ker \acti) \varphi_0 = \ker \actii$. Note that then, for an arbitrary automorphism $\varphi \in \Aut(\Fn)$, $(\ker \acti) \varphi = \ker \actii$ if and only if $ \varphi \varphi_0^{-1} \in \stab(\ker \acti)$. That is, our set of candidate automorphisms $\varphi$ is reduced to $ \stab(\ker \acti) \varphi_0$, where $\stab(\ker \acti) = \set{\psi \in \Aut(\Fn) \st (\ker \acti)\psi = \ker \acti}$ is the (theoretically infinite) auto-stabilizer of $\ker \acti$. Note however that, since
both $\psi\in \stab(\acti)$ and $\acti$ factor through $\ker \acti$, and $(\ker \acti) \varphi = \ker \actii$, we can factorize the condition \eqref{eq: IP cond2} into the following diagram involving only finite (and computable) groups:
\begin{figure}[H]
\centering
\begin{tikzcd}
\Fn/\ker\acti \arrow[d, "\acti"', two heads] \arrow[r, "\overline{\psi}", dashed] & \Fn/\ker\acti \arrow[r, "\overline{\varphi}_0"] & \Fn/\ker\actii \arrow[d, "\actii", two heads] \\
\im \acti \arrow[rr, "{\inn[\matr{Q}]}", dashed]& & \im \actii                       
\end{tikzcd}
\end{figure}
Therefore, it is enough to check for every one of the finitely many $\overline{\psi} \in \overline{\stab}(\ker \acti)$ whether there exists some $\matr{Q} \in \GL_m (\ZZ)$ such that $\psi \varphi_0 \actii = \acti \inn[\matr{Q}]$.
But, as stated in \Cref{cor: MCP}, this amounts to finitely many instances of the $\MCP(\GL_m(\ZZ))$, and is therefore decidable in finite time. 
If at least one of these instances returns \yep\, then, the answer to our problem is \yep\ as well; otherwise the answer to our problem is \nop. This completes the proof.
\end{proof}

\subsection{Faithful actions}
The purpose of this section is to prove that the isomorphism problem restricted to FABF groups given by faithful actions coincides with a constrained version of the free subgroup conjugacy problem in $\GLm$, which we do not know if it is decidable. We remark it is not decidable whether a finite set of unimodular matrices of the same size generates a free group.



\begin{rem}
Let $m \in \NN$, Then, the problem
\begin{equation}
    \gen{\matr{A_1},\ldots,\matr{A_n}} \text{ is free} \?_{\subalign{&n\in\NN,\\ &\matr{A_1},\ldots,\matr{A_n} \in \GL_m(\ZZ)}}
\end{equation}
is decidable for $m=2$
and unknown for $m \geq 3$.
\end{rem}

We start by defining 
a constrained version of the generalized conjugacy problem.

\begin{defn} Let $G=\gen{X}$ be a group. The \emph{$n$-constrained free subgroup conjugacy problem} is
\begin{align}
    \CFSCP(G,n) \ \equiv\ 
    &\gen{S}_G  \conj \gen{T}_G
    \?\,_{S, T \subseteq_{\mathsf{finite}} (X^\pm)^* \,\st\, \gen{S}_G \,\isom\, \gen{T}_G \,\isom\, \Free[n]
     }
 \end{align}
\end{defn}



 \begin{lem}\label{lem: injective isomorphic}
 Let $m\geq 1, n\geq 2$, and let $\acti,\actii:\Fn\to \GL_m(\ZZ)$ be injective homomorphisms. Then $\Ga$ is isomorphic to $\Gb$ if and only if $\im(\acti)$ and $\im(\actii)$ are conjugate in $\GL_m(\ZZ)$. That is,
 \begin{equation}
     \Ga \isom \Gb
     \ \Leftrightarrow\
     \exists \matr{Q}\in \GL_m(\ZZ) \st \matr{Q}^{-1} \im(\acti) \matr{Q} = \im(\actii)
 \end{equation}
 \end{lem}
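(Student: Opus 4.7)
The plan is to reduce both directions to \Cref{thm: isom FABF}, which tells us that (since necessarily $n=n'$ and $m=m'$) the isomorphism $\Ga \isom \Gb$ is equivalent to the existence of some $\varphi \in \Aut(\Fn)$ and $\matr{Q} \in \GL_m(\ZZ)$ satisfying $\varphi \actii = \acti \inn[\matr{Q}]$.

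For the forward direction, I would simply invoke \Cref{lem: necessary isom}\,(i): if $\Ga\isom\Gb$ then $(\im \acti)\inn[\matr{Q}] = \im \actii$ for some $\matr{Q}\in\GL_m(\ZZ)$, which is exactly the stated conjugacy of images. No injectivity hypothesis is used here.

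For the backward direction, assume $\matr{Q}^{-1} \im(\acti) \matr{Q} = \im(\actii)$. For each generator $x_i$ of $\Fn$, the matrix $\matr{Q}^{-1} \matr{A}_i \matr{Q}$ lies in $\im(\actii)$, so by injectivity of $\actii$ there is a unique element $v_i \in \Fn$ with $(v_i)\actii = \matr{Q}^{-1} \matr{A}_i \matr{Q}$. Define $\varphi \in \End(\Fn)$ by $x_i \mapsto v_i$; this is the unique endomorphism satisfying $\varphi \actii = \acti\,\inn[\matr{Q}]$. Symmetrically, using $\matr{Q}^{-1} \im(\actii) \matr{Q}^{-1\,-1} = \im(\acti)$ together with the injectivity of $\acti$, one obtains an endomorphism $\psi \in \End(\Fn)$ with $\psi\acti = \actii \inn[\matr{Q}^{-1}]$. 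Composing,
\[
(\psi\varphi)\actii \,=\, \psi(\varphi\actii) \,=\, \psi\,\acti\,\inn[\matr{Q}] \,=\, \actii\,\inn[\matr{Q}^{-1}]\inn[\matr{Q}] \,=\, \actii,
\]
and injectivity of $\actii$ forces $\psi\varphi = \id_{\Fn}$; analogously $\varphi\psi = \id_{\Fn}$. Hence $\varphi \in \Aut(\Fn)$, and \Cref{thm: isom FABF} yields $\Ga \isom \Gb$.

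The only subtle point is the automorphism upgrade in the backward direction; once one realizes that injectivity of both $\acti$ and $\actii$ lets us read off a two-sided inverse of $\varphi$ from the symmetric instance of the conjugacy, everything falls into place and no further machinery is needed.
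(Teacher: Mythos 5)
Your proof is correct and takes essentially the same route as the paper: the forward implication is exactly \Cref{lem: necessary isom}, and the backward implication produces the automorphism $\varphi = \acti\,\inn[\matr{Q}]\,\actii^{-1}$ and then invokes condition \eqref{eq: IP cond2} of \Cref{thm: isom FABF}. The only difference is presentational --- the paper observes directly that $\acti\,\inn[\matr{Q}]\,\actii^{-1}$ is a composition of isomorphisms $\Fn \to \im(\acti) \to \im(\actii) \to \Fn$, whereas you define $\varphi$ on generators and verify invertibility by exhibiting the symmetric two-sided inverse $\psi$ explicitly.
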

 \begin{proof}
 As seen in \Cref{lem: necessary isom} the implication to the right always holds. Conversely, if~$\im(\actii) = \matr{Q}^{-1} \im(\acti) \matr{Q} = \im(\acti) \inn[\matr{Q}]$ for some matrix $\matr{Q}\in \GL_m(\ZZ)$, then $\inn[\matr{Q}] \in \Inn(\GL_m(\ZZ))$ restricts to an isomorphism $\im(\acti) \to \im(\actii)$ and --- since $\acti$ and $\actii$ are, by hypothesis, injective --- the composition $\varphi_{\matr{Q}} = \acti \inn[\matr{Q}] \actii^{-1}$ is an automorphism of $\Fn$ obviously satisfying the condition $\varphi_{\matr{Q}} \actii=\acti\inn[\matr{Q}]$. By (\ref{eq: IP cond2}), $\Ga$ and $\Gb$ are isomorphic.
 \end{proof}

For every $m\geq 1, n\geq 2$, we denote by $\Fn \ltimes\inj \ZZ^m $ the family of $\ZZ^m$ by $\Fn$ groups with faithful action, and we denote by $\G\inj$ the whole family of (finitely generated) free-abelian by free groups with faithful action.
That is
\begin{align*}
    \Fn \ltimes\inj \ZZ^m
    &\,=\,
    \Set{
    \Fn \ltimes_{\acti} \ZZ^m \st \acti\colon \Fn \to \GL_m(\ZZ) \text{ is an injective homomorphism}
    }\\
    \G\inj
    &\,=\, 
    \textstyle{\bigcup_{m\geq 1, n\geq 2} \ \Fn \ltimes\inj \ZZ^m}
\end{align*}
Note that the isomorphism problem for groups belonging to the first family is:
\begin{align}
    \IP(\Fn \ltimes\inj \ZZ^m)
    &\ \equiv \ 
    \Ga \isom \Gb \?\,_{\acti,\actii\colon \Fn \to \GL_m(\ZZ) \text{ injective homomorphisms}}
\end{align}
where $\acti$ and $\actii$ can be assumed to be given as sets of images of the standard basis for $\Fn$; that is as respective $n$-tuples of matrices $(\matr{A_1},\ldots,\matr{A_n})$ and $ (\matr{B_1},\ldots,\matr{B_n})$ in $\GL_m(\ZZ)$ such that $\im (\acti) = \gen{\matr{A_1},\ldots,\matr{A_n}} \isom \gen{\matr{B_1},\ldots,\matr{B_n}} = \im (\actii) \isom \Fn$.

 

\begin{prop}
For every $m\geq 1$ and every $n\geq 2$,
the decidability of $\IP(\Fn \ltimes\inj \ZZ^m)$ is equivalent to the decidability of $\CFSCP(\GL_m(\ZZ),n)$; that is,
\begin{equation}
    \IP(\Fn \ltimes\inj \ZZ^m)
    \ \equiv_T \ 
    \CFSCP(\GL_m(\ZZ),n)
\end{equation}
\end{prop}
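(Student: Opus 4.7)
The plan is to establish both Turing reductions $\IP(\Fn \ltimes\inj \ZZ^m) \leq_T \CFSCP(\GL_m(\ZZ),n)$ and $\CFSCP(\GL_m(\ZZ),n) \leq_T \IP(\Fn \ltimes\inj \ZZ^m)$, pivoting in each case on \Cref{lem: injective isomorphic}: under faithful actions, the FABF groups $\Ga$ and $\Gb$ are isomorphic if and only if $\im(\acti)$ and $\im(\actii)$ are conjugate subgroups of $\GL_m(\ZZ)$.

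The direction $\IP \leq_T \CFSCP$ is essentially immediate. An instance of $\IP(\Fn \ltimes\inj \ZZ^m)$ is given by two $n$-tuples of matrices $(\matr{A_1},\ldots,\matr{A_n})$ and $(\matr{B_1},\ldots,\matr{B_n})$ in $\GL_m(\ZZ)$ defining faithful actions. By faithfulness, both $\gen{\matr{A_1},\ldots,\matr{A_n}}$ and $\gen{\matr{B_1},\ldots,\matr{B_n}}$ are free of rank exactly $n$, so the pair $\bigl(\{\matr{A_i}\},\{\matr{B_j}\}\bigr)$ is a valid input for $\CFSCP(\GL_m(\ZZ),n)$, whose output coincides (by \Cref{lem: injective isomorphic}) with that of the original $\IP$ instance.

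For the direction $\CFSCP \leq_T \IP$, we are given finite sets $S, T \subseteq \GL_m(\ZZ)$ with $\gen{S} \isom \gen{T} \isom \Fn$, and the plan is to compute faithful actions $\acti, \actii\colon \Fn \to \GL_m(\ZZ)$ with $\im(\acti) = \gen{S}$ and $\im(\actii) = \gen{T}$; a single oracle call to $\IP$ on the corresponding $\Ga, \Gb$ then resolves the $\CFSCP$ instance via \Cref{lem: injective isomorphic}. For this it suffices to effectively locate a generating $n$-tuple inside $\gen{S}$ (resp. $\gen{T}$); since $\gen{S}$ is free of rank $n$ and free groups are Hopfian, any generating $n$-tuple is automatically a basis, hence yields a faithful action. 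Such an $n$-tuple can be found by a dovetailed search: enumerate all $n$-tuples of words $(w_1,\ldots,w_n)$ over $S^{\pm}$, and for each candidate, launch in parallel (one per element $s \in S$) a process that enumerates words over $\{w_1,\ldots,w_n\}^{\pm}$ and checks by matrix equality in $\GL_m(\ZZ)$ (decidable) whether some such word evaluates to $s$. Accept the first candidate all of whose subprocesses terminate. By Nielsen--Schreier some such basis of $\gen{S}$ exists and is realized by an $n$-tuple of words over $S^{\pm}$, so the search is guaranteed to succeed in finite time; an analogous procedure yields a basis of $\gen{T}$.

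The main obstacle is precisely this second reduction, namely the effective conversion of the given finite generating set of $\gen{S}$ (whose size need not match the rank $n$) into an $n$-element basis---in effect, a semi-decidable rank problem inside $\GL_m(\ZZ)$. What rescues the naive search is the combination of two facts: for any fixed element of $\gen{S}$, membership in $\gen{w_1,\ldots,w_n}$ is recursively enumerable, while the promise $\gen{S} \isom \Fn$ simultaneously guarantees the existence of some successful candidate and that every generating $n$-tuple is automatically free, so no separate freeness test is required.
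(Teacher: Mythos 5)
Your proof is correct and follows essentially the same route as the paper: both directions pivot on \Cref{lem: injective isomorphic}, with the forward reduction being immediate and the reverse reduction requiring the computation of rank-$n$ bases for $\gen{S}$ and $\gen{T}$ before invoking the $\IP$ oracle. The only difference is that the paper simply asserts that such bases are computable ``since the rank is known by assumption,'' whereas you supply an explicit justification via a dovetailed search combined with Hopfianity of $\Fn$ --- a legitimate filling-in of a step the paper leaves implicit.
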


 \begin{proof}
 Suppose that~$\CFSCP(\GL_m(\ZZ),n)$ is decidable and
let $\acti,\actii$ be given injective homomorphisms $\Fn\to \GL_m(\ZZ)$. That is,
 we are given
two $n$-tuples
 $(\matr{A_1},\ldots,\matr{A_n})$ and $(\matr{B_1},\ldots,\matr{B_n})$ of matrices in $\GL_m(\ZZ)$ generating the subgroups $\im (\acti) \isom \Fn$ and $\im(\actii) \isom \Fn$ respectively. 
  According to \Cref{lem: injective isomorphic}, in this situation, $\Ga \isom \Gb$ if and only if $\im(\acti) = \gen{\matr{A_1},\ldots,\matr{A_n}}$ is conjugate to $\im(\actii) = \gen{\matr{B_1},\ldots,\matr{B_n}}$, which is precisely the output of $\CFSCP(\GL_m(\ZZ),n)$ on input $\matr{A_1},\ldots,\matr{A_n};\matr{B_1},\ldots,\matr{B_n}$.
Hence, $\IP(\Fn \ltimes\inj \ZZ^m)
     \leq_T  
    \CFSCP(\GL_m(\ZZ),n)$.

Conversely, 
suppose that $\IP(\Fn \ltimes\inj \ZZ^m)$ is decidable and let $\set{\matr{C_1},\ldots,\matr{C_p}}$ and $\set{\matr{D_1},\ldots,\matr{D_q}}$ be two sets of matrices in $\GL_m(\ZZ)$ generating respective free subgroups of rank $n$.
Then, since the rank is known by assumption, 
we can compute bases $(\matr{A_1},\ldots,\matr{A_n})$ for $\gen{\matr{C_1},\ldots,\matr{C_p}}$ and $(\matr{B_1},\ldots,\matr{B_n})$ for $\gen{\matr{D_1},\ldots,\matr{D_q}}$.
Now it is enough to consider the (injective) homomorphisms $\acti, \actii \colon \Fn \to \GL_m(\ZZ)$ given by $x_i \to \matr{A_i}$ and $\actii: x_i \mapsto \matr{B_i}$ (for $i \in [1,n]$), and give them as an input for $\IP(\Fn \ltimes\inj \ZZ^m)$. Again by \Cref{lem: injective isomorphic} this procedure will decide whether $\gen{\matr{A_1},\ldots,\matr{A_n}} = \gen{\matr{C_1},\ldots,\matr{C_p}}$ and 
$\gen{\matr{B_1},\ldots,\matr{B_n}} = \gen{\matr{D_1},\ldots,\matr{D_q}}$ are conjugate in $\GL_m(\ZZ)$.
Hence,
$\CFSCP(\GL_m(\ZZ),n)
\leq_T  
\IP(\Fn \ltimes\inj \ZZ^m)$, and the proof is complete.
 \end{proof}
 
 

\section{Orbit problems and applications}\label{sec: orbit problems}
We generically call orbit decision problems (\OD\ problems) to
the problems of deciding whether two given objects belong to the same orbit under the action of a certain family
$\mathcal{T}$ of transformations. If an algorithm to answer such question exists, then we say that the family $\mathcal{T}$ is orbit decidable (or that the problem $\OD(\mathcal{T})$ is decidable).

In the context of group theory,  we usually consider a fixed group $G$ (belonging to a certain class), the family of transformations is some subfamily $\mathcal{T} \subseteq \End(G)$, and the objects are typically (conjugacy classes of, tuples of) elements or subgroups in $G$. We start recalling that this is a broad schema that embraces many classical problems in group theory. 
For example, the second of Dehn's problems --- the \emph{conjugacy problem} $\CP(G)$ --- is nothing more than ${\OD(\Inn(G))}$. Other well-known orbit problems, usually referred to as \emph{Whitehead problems} in the literature, correspond to the cases where $\mathcal{T}$ is the full set of endomorphisms, monomorphisms or automorphisms of $G$ (as well as their variants modulo conjugation). For example, in \cite{whitehead_equivalent_1936}, \citeauthor{whitehead_equivalent_1936} introduces the seminal peak-reduction technique to solve $\OD(\Aut(\Fn))$,
which has been subsequently generalized to solve different variants of the problem (dealing with subgroups, tuples, etc.). Also in the realm of free groups, the orbit decidability of $\Mon(\Fn)$ was proved by Ciobanu and Houcine in \cite{ciobanu_monomorphism_2010}, whereas that of $\End(\Fn)$ follows from the work of Makanin in~\cite{makanin_equations_1982}. 

\subsection{Brinkmann's Problems}
In \cite{brinkmann_detecting_2010}, Brinkmann considers cyclic subgroups of automorphisms of the free group and proves their uniform orbit decidability both for words and conjugacy classes. Following Brinkmann's initial research, we  call \defin{Brinkmann problems} the problems about orbit decidability of cyclic subgroups of endomorphisms.

\begin{defn} \label{def: Brinkmann problems}
Let $G$ be a group
(given by a finite presentation $\pres{X}{R}$)
and let $\mathcal{T} \subseteq \End(G)$ be a family of endomorphisms
(given as a set of words representing the images of the generators).
The \defin{Brinkmann problem} 
in $G$ \wrt $\mathcal{T}$, denoted by $\BrP_{\mathcal{T}}(G)$
is
the problem of deciding,
given an endomorphism $\varphi \in \mathcal{T}$ and two elements $x,y \in G$,
whether there exists some $n \in \NN$ such that $(x)\varphi^n = y$; \ie
\begin{equation} \label{eq: BrP}
\BrP_{\mathcal{T}}(G)
\,\equiv\,
\exists n \in \NN \st (x) \varphi^n = y \?
_{\subalign{&\varphi\in\mathcal{T},\\ & x,y\in G}}
\end{equation}
Similarly, the \defin{Brinkmann conjugacy problem} in $G$ \wrt $\mathcal{T}$, denoted by $\BrCP(G)$ is
the problem of deciding,
given an endomorphism $\varphi \in \mathcal{T}$ and two elements $x,y \in G$,
whether there exists some $n \in \NN$ such that $(x)\varphi^n$ and $y$ are conjugate in $G$; \ie
\begin{equation} \label{eq: BrCP}
\BrCP_{\mathcal{T}}(G)
\,\equiv\,
\exists n \in \NN \st (x) \varphi^n \conj y \?
_{\subalign{&\varphi\in\mathcal{T},\\ & x,y\in G}}
\end{equation}
We abbreviate $\BrPa(G) = \BrP_{\!\Aut(G)}(G)$, $\BrPm(G) = \BrP_{\Mon(G)}(G)$, and $\BrPe(G) = \BrP_{\End(G)}$, and similarly for $\BrCP$.
\end{defn}

\begin{rem} \label{rem: BP(id) = WP}
Note that $\BrP_{\mathrm{id}_G}(G)  = \WP(G)$ and   $\BrCP_{\mathrm{id}_G}(G) =\CP(G)$. 
Hence, it is clear that
$\WP(G) \preceq \BrPa(G) \preceq \BrPm(G) \preceq \BrPe(G)$ and 
$\CP(G) \preceq \BrCPa(G) \preceq \BrCPm(G) \preceq \BrCPe(G)$.
\end{rem}

\begin{rem} \label{rem: yes outputs}
Note that the decidability of Brinkmann's problems
immediately allows us to compute a witness in case it exists: if the answer is \yep, then it is enough to keep enumerating the successive images $((x)\varphi^n)_{n \geq 0}$ and use the positive part of word problem (\resp conjugacy problem) to check for a guaranteed match with $y$. In fact, we will see that if $\WP(G)$ (\resp $\CP(G)$) is decidable
then we can compute the full set of witnesses for Brinkmann's problems in the sense made precise below. 
\end{rem}

\begin{defn}
Let $G$ be a group, let $x,y \in G$, and let $\varphi \in \End (G)$. Then, the set of \defin{$\varphi$-logarithms} (\resp \defin{$\widetilde{\varphi}$-logarithms}) of $y$ in base $x$ is
\begin{equation*}
    \varphi\text{-\!}\log_x(y)
    \,=\,
    \set{k\geq 0 \st (x)\varphi^k = y}
    \qquad
    \text{(\resp}\widetilde{\varphi}\text{-\!}\log_x(y)
    \,=\,
    \set{k\geq 0 \st (x)\varphi^k \conj y}\text{)}
\end{equation*}

Note that $0 \in \varphi\text{-\!}\log_x(y)$ 
(\resp $0 \in \widetilde{\varphi}\text{-\!}\log_x(y)$)
if and only if
$x=y$ (\resp $x\conj y$); and then
$
\varphi\text{-\!}\log_x(x) = p\NN$ 
(\resp $
\widetilde{\varphi}\text{-\!}\log_x(x) = p\NN$), for some $p \in \NN$, which is called the 
\defin{$\varphi$-period} of~$x$
(\resp
\defin{$\widetilde{\varphi}$-period} of~$x$).
In particular we say that the $\varphi$-period
(\resp $\widetilde{\varphi}$-period)
of $x$ is $0$ if~$(x)\varphi^k \neq x$
(\resp $(x)\varphi^k \nconj x$) for all $k>0$. 
\end{defn}


\begin{lem} \label{lem: philog form}
Let $G$ be a group, let $x,y \in G$, and let $\varphi \in \End (G)$. Then, 
\begin{enumerate}[ind]
    \item \label{item: philog}
    either $\varphi\text{-\!}\log_x(y) = \varnothing$ or $\varphi\text{-\!}\log_x(y) = k_0 + p\NN$,
    where $k_0 = \min (\varphi\text{-\!}\log_x(y))$, and $p$ is the $\varphi$-period of $y$.
    \item \label{item: cphilog}
    either $\widetilde{\varphi}\text{-\!}\log_x(y) = \varnothing$ or $\widetilde{\varphi}\text{-\!}\log_x(y) = k_0 + p\NN$,
    where $k_0 = \min (\widetilde{\varphi}\text{-\!}\log_x(y))$, and $p$ is the $\widetilde{\varphi}$-period of $y$.
\end{enumerate}
\end{lem}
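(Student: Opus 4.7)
The plan is to handle both parts uniformly by analyzing the \emph{return set} of $y$ under $\varphi$. For part \ref{item: philog}, assume $\varphi\text{-\!}\log_x(y) \neq \varnothing$ and let $k_0 = \min(\varphi\text{-\!}\log_x(y))$, so $(x)\varphi^{k_0} = y$. For any $k \geq k_0$, writing $k = k_0 + j$ with $j \geq 0$, one has
$(x)\varphi^k = ((x)\varphi^{k_0})\varphi^j = (y)\varphi^j$,
so $k \in \varphi\text{-\!}\log_x(y)$ if and only if $j$ belongs to the return set $R := \{j \geq 0 \st (y)\varphi^j = y\}$. Since no $k < k_0$ lies in $\varphi\text{-\!}\log_x(y)$ by minimality, the problem reduces to showing $R = p\NN$, where $p$ is the $\varphi$-period of $y$.

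First I would observe that $R$ is a submonoid of $(\NN,+)$: it contains $0$, and if $j_1,j_2 \in R$ then $(y)\varphi^{j_1+j_2} = ((y)\varphi^{j_1})\varphi^{j_2} = (y)\varphi^{j_2} = y$, so $j_1+j_2 \in R$. If $R = \{0\}$, then $p=0$ and $R = p\NN$ by convention. Otherwise let $p := \min(R \setmin \{0\})$, which by definition is the $\varphi$-period of $y$; then $p\NN \subseteq R$ follows by iteration. For the reverse inclusion I would invoke Euclidean division: given $j \in R$, write $j = qp + r$ with $0 \leq r < p$, and compute
$y \,=\, (y)\varphi^j \,=\, ((y)\varphi^{qp})\varphi^r \,=\, (y)\varphi^r,$
using that $qp \in R$. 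This forces $r \in R$, hence $r = 0$ by minimality of $p$, so $j \in p\NN$.

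For part \ref{item: cphilog}, the same skeleton applies with the conjugacy return set $\widetilde R := \{j \geq 0 \st (y)\varphi^j \conj y\}$, except that I need to verify $\widetilde R$ is again a submonoid of $(\NN,+)$. The key observation is that any endomorphism preserves conjugacy: if $h^{-1}\, (y)\varphi^{j_1}\, h = y$, applying $\varphi^{j_2}$ to both sides yields $((h)\varphi^{j_2})^{-1}\, (y)\varphi^{j_1+j_2}\, (h)\varphi^{j_2} = (y)\varphi^{j_2}$, so $(y)\varphi^{j_1+j_2} \conj (y)\varphi^{j_2}$; combining with $(y)\varphi^{j_2} \conj y$ via transitivity gives $j_1+j_2 \in \widetilde R$. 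The Euclidean division argument then carries through verbatim, with $=$ replaced by $\conj$.

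The only subtlety worth flagging is that the reduction $(y)\varphi^j = (y)\varphi^r$ (respectively $(y)\varphi^j \conj (y)\varphi^r$) requires applying $\varphi$ only on the right of $(y)\varphi^{qp}$, so no invertibility of $\varphi$ is needed---which is important since $\varphi$ is only assumed to be an endomorphism. No other obstacle is expected; the argument is essentially the standard fact that a submonoid of $\NN$ containing a period $p$ and closed under the forward dynamics of $\varphi$ must coincide with $p\NN$.
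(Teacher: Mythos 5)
Your proof is correct and follows essentially the same route as the paper's: both reduce membership of $k=k_0+j$ in the logarithm set to membership of $j$ in the return set $\varphi\text{-\!}\log_y(y)$ (\resp its conjugacy analogue). The only difference is that you additionally prove, via the submonoid-plus-Euclidean-division argument, that this return set equals $p\NN$ --- a fact the paper asserts without proof inside the definition of the $\varphi$-period --- so your write-up is slightly more self-contained but not a different argument.
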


\begin{proof}
\ref{item: philog} Assume that $\varphi\text{-\!}\log_x(y) \neq \varnothing$ and let $k_0 = \min (\varphi\text{-\!}\log_x(y))$ and $p$ be the $\varphi$-period of~$y$. Then, the inclusion to the left is clear from the definitions.
Conversely, suppose that $k \in \varphi\text{-\!}\log_x(y)$, then $(x)\varphi^k = y = (x)\varphi^{k_0}$, where $k \geq k_0$.
That is, $(y) \varphi^{k-k_0} = ((x)\varphi^{k}) \varphi^{k-k_0} = (x)\varphi^{k_0} = y$ and hence $k-k_0 \in \varphi$-$\log_y(y) = p \NN$ as claimed. The proof for \ref{item: cphilog} is exactly the same changing equality by conjugacy and $\varphi$ by $\widetilde{\varphi}$.
\end{proof}

Note that the Brinkmann problems \eqref{eq: BrP} and \eqref{eq: BrCP} consist in nothing more than deciding about the emptiness of the sets of logarithms $\varphi\text{-\!}\log_x(y)$ and $\widetilde{\varphi}\text{-\!}\log_x(y)$ respectively. Below we prove that these decisions are indeed algorithmically equivalent to the explicit computation of the sets of logarithms according their description in \Cref{lem: philog form}.

\begin{lem} \label{lem: compute logs}
\begin{enumerate*}[ind]
    \item \label{item: compute philog}
    If $\WP(G)$ and $\BrP_{\!\varphi}(G)$ are decidable then, given $x,y \in G$, we can
compute
$\varphi\text{-\!}\log_x(y)
$.
    \item \label{item: compute cphilog}
    If $\CP(G)$ and $\BrCP_{\!\varphi}(G)$ is decidable then, given $x,y \in G$, we can decide whether $\widetilde{\varphi}\text{-\!}\log_x(y)$
is empty and, if not, compute $k_0$ and $p$ such that $\widetilde{\varphi}\text{-\!}\log_x(y) = k_0 + p\NN$.
\end{enumerate*}

\end{lem}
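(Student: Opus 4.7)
The plan is to reduce both parts to a combination of deciding emptiness (via Brinkmann's problem), locating the first match (via enumeration plus the word/conjugacy problem), and identifying the period (via a small trick that sidesteps the trivial witness~$n=0$).

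For part \ref{item: compute philog}, first run $\BrP_{\!\varphi}(G)$ on the instance $(\varphi,x,y)$. If the output is \nop, then $\varphi\text{-}\!\log_x(y) = \varnothing$ and we are done. Otherwise we know such an exponent exists, so by enumerating $n = 0,1,2,\ldots$ and testing $(x)\varphi^{n} =_G y$ with~$\WP(G)$, we are guaranteed to find (and output) the minimum $k_0 = \min(\varphi\text{-}\!\log_x(y))$ in finite time.

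It remains to determine the $\varphi$-period $p$ of $y$, that is, the smallest integer $k\geq 1$ with $(y)\varphi^{k}=y$ (or $p=0$ if no such $k$ exists). The key observation is that $p>0$ if and only if there exists some $n\geq 0$ with $((y)\varphi)\varphi^{n} =_G y$, which is an instance of $\BrP_{\!\varphi}(G)$ on input $(\varphi, (y)\varphi, y)$ (note that asking the same question on input $(\varphi, y, y)$ would be useless, as it is trivially positive with $n=0$). If this query returns \nop, then $p=0$ and, by \Cref{lem: philog form}, $\varphi\text{-}\!\log_x(y) = \{k_0\}$. Otherwise, we enumerate $k=1,2,\ldots$ and use~$\WP(G)$ to detect the first $k$ with $(y)\varphi^{k} =_G y$, which yields~$p$. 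We then output $k_0+p\NN$.

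Part \ref{item: compute cphilog} follows by the same template, replacing throughout: equality by conjugacy, $\WP(G)$ by $\CP(G)$, and $\BrP_{\!\varphi}(G)$ by $\BrCP_{\!\varphi}(G)$; in particular, the $\widetilde{\varphi}$-period of $y$ is positive if and only if the instance $(\varphi,(y)\varphi, y)$ of $\BrCP_{\!\varphi}(G)$ answers \yep. There is no substantial obstacle in the argument: the only subtle point (and the reason it is worth stating explicitly) is the Brinkmann query on $((y)\varphi,y)$ rather than $(y,y)$, which is exactly the device needed to extract the \emph{nontrivial} period from a Brinkmann oracle whose definition allows $n=0$.
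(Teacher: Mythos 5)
Your proof is correct and follows essentially the same route as the paper's: decide emptiness and locate $k_0$ by enumeration with the word (resp.\ conjugacy) problem, then determine the period by querying the Brinkmann oracle on the shifted input $((y)\varphi,\,y)$ to exclude the trivial exponent $0$, and enumerate again to find the least positive element of $\varphi\text{-}\!\log_y(y)$. The shifted-input trick you single out is exactly the device the paper uses as well, so there is nothing to add.
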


\begin{proof}
\ref{item: compute philog}
Assume that both $\WP(G)$ and $\BrP_{\!\varphi}(G)$ are decidable. If, on input $(x,y) \in G^2$, $\BrP_{\!\varphi}(G)$ answers \nop, then
$\varphi\text{-\!}\log_x(y) = \varnothing$. Otherwise, 
there exists some $k \geq 0$ such that $(x) \varphi^k = y$. Now it is enough 
use the decidability of the word problem to successively check (starting from $n=0$) whether $(x)\varphi^n = y$ until a guaranteed first match occurs, precisely at step $n = k_0$. Similarly, in order to compute the $\varphi$-period $p$ of $y$, we run  $\BrP_{\!\varphi}(G)$ on input $((y)\varphi, y)$ to check whether there is any $k \geq 1$ such that $(y)\varphi^k = y$. If the answer is \nop, then $\varphi\text{-\!}\log_y(y) = \set{0}$ and hence $p = 0$. Otherwise, we know that $\varphi\text{-\!}\log_y(y) \supsetneq \set{0}$, and we can use the decidability of $\WP(G)$ in the same way as before to compute the least element in $\varphi\text{-\!}\log_y(y) \setmin \set{0}$, which is precisely $p$. The proof for~\ref{item: compute cphilog} is completely analogous, changing $\WP(G)$ by $\CP(G)$, $\BrP_{\!\varphi}(G)$ by $\BrCP_{\!\varphi}(G)$, equality by conjugacy, and $\varphi$ by~$\widetilde{\varphi}$.
\end{proof}




In this section we study Brinkmann's problems in the context of FABF groups $\Ga$. From \Cref{rem: BP(id) = WP} and the undecidability of $\CP(\FABF)$ proved in \cite{bogopolski_orbit_2010} it immediately follows the undecidability of $\BrCP(\FABF)$ \wrt any family of endomorphisms containing the identity. Hence we will focus in $\BrP(\FABF)$.
A natural starting point is to consider the same problem(s) on its factors (namely, finitely generated free and free-abelian groups). 

For free-abelian groups, the relevant result is stated in  \cite{kannan_polynomial-time_1986}, where \citeauthor{kannan_polynomial-time_1986} provide  a polynomial algorithm to solve $\BrPe$ in the groups of the form $\QQ^m$. We state their result below, restricted to $\Zm$.

\begin{thm}[\citenr{kannan_polynomial-time_1986}]
\label{thm: KannanLipton}
The Brinkmann problem $\BrPe(\Zm)$ is algorithmically decidable. 
\end{thm}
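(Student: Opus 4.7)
The plan is to reduce the orbit problem over $\Zm$ to deciding the emptiness of the zero-set of an integer linear recurrence, and then invoke effective root-separation bounds to cut off the search at an explicitly computable $N$. The input data is an endomorphism of $\Zm$ given by a matrix $\matr{M}\in\mathcal{M}_{m\times m}(\ZZ)$, together with vectors $\vect{x},\vect{y}\in\ZZ^m$; we must decide whether $\vect{x}\matr{M}^n=\vect{y}$ for some $n\geq 0$.

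As a preliminary step, work over $\QQ$ and compute a basis for the cyclic $\matr{M}$-invariant subspace $V=\gen{\vect{x}\matr{M}^n \st n\geq 0}_{\QQ}\subseteq \QQ^m$. This is done by enumerating $\vect{x},\vect{x}\matr{M},\vect{x}\matr{M}^2,\ldots$ and detecting the first linear dependence, which occurs at some step $d\leq m$; the resulting relation both gives $\dim V=d$ and yields, in the basis $(\vect{x}\matr{M}^j)_{0\leq j<d}$, the companion matrix $\matr{N}$ of the minimal polynomial of $\matr{M}|_V$ (well-defined since $V$ is cyclic by construction). A necessary condition for $\vect{y}$ to belong to the orbit of $\vect{x}$ under $\matr{M}$ is that $\vect{y}\in V$, which is checkable by linear algebra; if it fails, output \nop. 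Otherwise, write $\vect{y}=\sum_{j=0}^{d-1}c_j\,\vect{x}\matr{M}^j$ with unique rationals $c_j$, so that the orbit condition becomes $\vect{e_1}\matr{N}^n=(c_0,\ldots,c_{d-1})$.

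Over the splitting field of the minimal polynomial of $\matr{M}|_V$, each coordinate of $\vect{e_1}\matr{N}^n$ is a polynomial-exponential expression $\sum_{k}P_k(n)\,\lambda_k^n$, where the $\lambda_k$ are the (algebraic) eigenvalues of $\matr{M}|_V$ and the $P_k$ have degree bounded by the size of the corresponding Jordan block. The orbit condition thus amounts to simultaneously solving $d$ equations of the form $u_n^{(j)}=c_j$, each involving an integer linear recurrent sequence of order at most $m$. The main obstacle is this last step: in full generality, deciding the emptiness of the zero-set of a linear recurrent sequence is the notoriously open Skolem problem. Kannan and Lipton bypass this by exploiting the cyclic orbit structure, treating separately the nilpotent part (eigenvalue $0$, which contributes only finitely many nonzero iterates), the hyperbolic part (some $|\lambda_k|\neq 1$, where growth/decay estimates suffice), and the torus part (all $|\lambda_k|=1$, handled via root-separation inequalities for algebraic numbers). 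The outcome is an effectively computable bound $N$ beyond which no new solutions can appear; it then suffices to compute $\vect{x}\matr{M}^n$ for $n=0,1,\ldots,N$ and compare with $\vect{y}$, outputting \yep\ iff a match occurs.
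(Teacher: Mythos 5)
The paper does not actually prove this statement: it is imported wholesale from Kannan and Lipton's work on the orbit problem, so there is no internal argument to compare against. Judging your proposal on its own terms, the first half is sound and matches the standard Kannan--Lipton setup: the reduction to the cyclic invariant subspace $V$, the membership test $\vect{y}\in V$, and the reformulation of the orbit condition as $\vect{e_1}\matr{N}^n=(c_0,\ldots,c_{d-1})$ for the companion matrix $\matr{N}$ of the minimal polynomial of $\matr{M}|_V$ are all correct.

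The gap is in the second half, and it sits exactly at the decisive step. After recasting the problem as $d$ simultaneous equations $u_n^{(j)}=c_j$ in linear recurrent sequences and correctly noting that each one individually is of Skolem type, you write that Kannan and Lipton ``bypass this by exploiting the cyclic orbit structure \ldots\ the outcome is an effectively computable bound $N$.'' That sentence is the entire content of the theorem; as written it is an appeal to the cited authors rather than an argument, and nothing in your text explains why the simultaneous system escapes the Skolem obstruction. The missing idea is rigidity: since $\matr{N}$ is a companion (hence non-derogatory) matrix, $\vect{e_1}\matr{N}^n$ is precisely the coefficient vector of $X^n \bmod \mu(X)$, so the condition $\vect{e_1}\matr{N}^n=(c_0,\ldots,c_{d-1})$ is equivalent to the single matrix equation $\matr{N}^n=c(\matr{N})$ with $c(X)=\sum_j c_jX^j$ explicitly computable from $\vect{x},\vect{y},\matr{M}$. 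This forces $\lambda^n=c(\lambda)$ for every eigenvalue $\lambda$ (with derivative conditions at repeated roots) --- an equation between two \emph{known} algebraic numbers, not a vanishing condition on a linear combination of exponentials. Each such equation is decidable: if some conjugate of $\lambda$ has absolute value different from $1$, comparing absolute values yields an explicit bound on $n$; if all conjugates have absolute value $1$, then $\lambda$ is an algebraic integer that is a root of unity by Kronecker's theorem and the equation is periodic in $n$; the eigenvalue $0$ and nontrivial Jordan blocks contribute only finitely many candidates or a polynomial growth bound. Intersecting the resulting solution sets gives the bound $N$ you assert. Without spelling out this step (or an equivalent one), the proposal does not establish decidability.
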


\begin{rem} \label{rem: Kannan affine}
We recall that every affine transformation can be represented as a (restriction of) a linear transformation; namely, for every $\vect{b},\vect{x},\vect{y} \in \Zm$ and every $\matr{P} \in \Mn$,
\begin{equation*}
\vect{x} \matr{P} + \vect{b} = \vect{y}
\ \Leftrightarrow \
\setlength\arraycolsep{3pt}
\begin{pmatrix}
\vect{x} & 1
\end{pmatrix}
\begin{pmatrix}
\matr{P} & \vect{0}\\
\matr{b} & 1
\end{pmatrix}
=
\begin{pmatrix}
\vect{y} & 1
\end{pmatrix}
\,.
\end{equation*}
Hence,
\Cref{thm: KannanLipton} immediately provides the decidability of the Brinkmann problem for affine transformations of $\ZZ^m$ as well. 
\end{rem}

Regarding the free group, the initial work of Brinkmann has recently found continuation in the preprint \cite{logan_conjugacy_2023}, where A.~Logan uses Mutanguha's techniques extending to endomorphisms the computability of the fixed subgroup of endomorphisms of $\Fn$ (see~\cite{mutanguha_constructing_2022}) to prove the decidability of \CP\ for ascending HNN-extensions of free groups. On his way to this result, Logan considers and solves several variants of the Brinkmann Problem
 from which he deduces their decidability for monomorphisms of the free group. Extending a bit further Logan's analysis the decidability for general endomorphisms was proven in \cite{carvalho_decidability_2023}.

\begin{thm} \label{thm: BrP free}
Let $\Fn$ be a finitely generated free group of rank $n$. Then, the Brinkmann problems $\BrPe(\Fn)$ and $\BrCPe(\Fn)$ are decidable. \qed
\end{thm}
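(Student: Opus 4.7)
The plan is to reduce the problem from arbitrary endomorphisms down to monomorphisms, and then invoke mapping-torus/train-track technology.

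\textbf{Step 1: From endomorphisms to monomorphisms.} Given $\varphi\in\End(\Fn)$, I would consider the descending chain of images $\Fn\supseteq\varphi(\Fn)\supseteq\varphi^2(\Fn)\supseteq\cdots$. Since each $\varphi^k(\Fn)$ is finitely generated of rank at most $n$, a Takahasi-type argument shows that the chain stabilizes after a computable index $N_0$ to a \emph{stable image} $H_\infty=\varphi^{N_0}(\Fn)$, on which $\varphi$ restricts to a monomorphism $\psi\colon H_\infty\to H_\infty$. For $n\geq N_0$, $(x)\varphi^n$ lies in $H_\infty$, so $\BrPe$ on input $(\varphi,x,y)$ reduces to: (a) a finite check of the small values $n<N_0$ using $\WP(\Fn)$; (b) deciding $y\in H_\infty$ via Stallings automata; and, if so, (c) solving $\BrPm$ for $\psi$ on $H_\infty$ applied to $(x)\varphi^{N_0}$ and $y$. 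The conjugacy variant $\BrCPe$ reduces analogously, using that a stabilized orbit can only be conjugate to $y$ via finitely many controllable conjugators.

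\textbf{Step 2: Monomorphisms via mapping tori.} For a monomorphism $\psi\colon H\to H$ of a free group, form the ascending HNN extension $M_\psi=\pres{H,t}{t^{-1}ht=(h)\psi \ \forall h\in H}$. Then $(x)\psi^n=y$ in $H$ is equivalent to $t^{-n}xt^n=y$ in $M_\psi$ (with $n\geq 0$), and similarly $\BrCPm$ corresponds to conjugacy in $M_\psi$ with a non-negativity constraint on the $t$-exponent of a conjugator. Thus both problems become constrained (conjugacy-like) problems in $M_\psi$.

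\textbf{Step 3: Train-track dynamics.} The heart of the matter is Mutanguha's construction of relative train-track representatives for monomorphisms of $\Fn$, extending Bestvina--Handel. Using the associated stratification and the attracting laminations, one extracts a computable bound $N$ such that any possible orbit (resp.~conjugacy orbit) coincidence $(x)\psi^n=y$ must occur within $n\leq N$; once such a bound is in hand, one simply tests $n=0,1,\ldots,N$ against the decidable $\WP(\Fn)$ (resp.~$\CP(\Fn)$). This is exactly the strategy carried out by Logan~\cite{logan_conjugacy_2023} for monomorphisms and extended in~\cite{carvalho_decidability_2023} to endomorphisms. The hard part will be Step 3: producing the computable bound $N$ requires genuine control over polynomial and exponential strata of the train-track map, periodic edges, and the interaction of the iterate $\psi^n$ with the Stallings automaton of $H_\infty$; this is where Mutanguha's machinery is indispensable and cannot be bypassed by a purely combinatorial argument.
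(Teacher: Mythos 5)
This theorem is stated in the paper without proof (note the tombstone placed directly in the statement): it is imported from the literature, with the monomorphism case attributed to Logan \cite{logan_conjugacy_2023} (who relies on Mutanguha's machinery \cite{mutanguha_constructing_2022}) and the extension to arbitrary endomorphisms to \cite{carvalho_decidability_2023}. Your outline reconstructs the strategy of those references and correctly locates the real difficulty in Step 3; but as written, Step 3 is an appeal to the cited results rather than an argument, so your proposal is ultimately the same move the paper makes --- quoting the literature --- presented as a reduction. Since the reduction itself is worth recording, let me point out where it needs repair.

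In Step 1 the claim that the chain $\Fn \supseteq (\Fn)\varphi \supseteq (\Fn)\varphi^2 \supseteq \cdots$ stabilizes is false: for $\varphi\colon x_i \mapsto x_i^2$ the images $\gen{x_1^{2^k},\ldots,x_n^{2^k}}$ descend strictly forever. What does stabilize (after at most $n$ steps, hence at a computable index $N_0$) is the \emph{rank} of the images; once the rank is constant, the restriction of $\varphi$ to $H=(\Fn)\varphi^{N_0}$ is injective because finitely generated free groups are Hopfian. This weaker fact still yields your reduction of $\BrPe(\Fn)$ to the monomorphism case: test the exponents $k<N_0$ with $\WP(\Fn)$, test $y\in H$ with Stallings automata, then run $\BrPm$ for $\varphi|_H$ on $H$. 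For $\BrCPe$ the analogous reduction is not immediate, since $(x)\varphi^k\conj y$ only forces some conjugate of $y$ to lie in $H$ and the conjugator ranges over all of $\Fn$ rather than over $H$; your phrase ``finitely many controllable conjugators'' hides a genuine issue, and this is one of the points the cited works must actually address. Finally, Step 2 is backwards relative to Logan's logic: the conjugacy problem in the ascending HNN extension $M_\psi$ is \emph{deduced from} the Brinkmann problems, not used to solve them, so reducing $\BrPm$ to a constrained conjugacy problem in $M_\psi$ would be circular unless that constrained problem is solved independently --- which again lands you back on the train-track analysis you have black-boxed.
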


Below, we combine \Cref{thm: KannanLipton,thm: BrP free} with our description of powers of endomorphisms of type I 
to prove the decidability of $\BrP$ for this latest family. 
The auxiliary lemma below provides an affine recursion which is a crucial ingredient in the proof.

\begin{lem}\label{lem: powers periodic affine}
Let $\Ga$ be a $\FABF$ group, $\Phi = \Phi_{\varphi,\matr{Q},\matr{P}}\in \Endi(\Ga)$, and $k,p\in \NN$ and $u\in \Fn$ be such that
$u \varphi^{k + p} = u \varphi^k$.
Then, for all $\lambda\geq 1$ and $\vect{a}\in \Zm$, 
\begin{equation} \label{eq: powers periodic affine}
(u\ta{a})\Phi^{k+\lambda p}\tau=\left((u\ta{a})\Phi^k\tau \right)\Theta^\lambda
\end{equation}
where $\Theta:\Zm\to\Zm$ is the affine transformation $\vect{x}\mapsto \vect{x
}\matr{Q}^p
+ u \varphi^{k} \, \bbeta_{\Phi^p}
$
\end{lem}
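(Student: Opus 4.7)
The plan is induction on $\lambda$, exploiting the semigroup law $\Phi^{k+\lambda p}=\Phi^{k+(\lambda-1)p}\,\Phi^{p}$ together with the explicit formula for images under type I endomorphisms given by \Cref{cor: homos FABF I}, applied to $\Phi^p$ (which again belongs to $\Endi(\Ga)$ by the composition corollary).

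First I would observe that iterating the hypothesis $u\varphi^{k+p}=u\varphi^{k}$, by repeatedly applying $\varphi^{p}$ to both sides, yields $u\varphi^{k+\lambda p}=u\varphi^{k}$ for every $\lambda\geq 0$. This stabilization of the free part is the key ingredient: it freezes the correction term $w\,\bbeta_{\Phi^p}$ appearing below at the $\lambda$-independent value $u\varphi^{k}\,\bbeta_{\Phi^p}$, which is precisely what makes the underlying recursion purely affine and causes $\Theta$ to be well-defined.

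For the base case $\lambda=1$, set $w=u\varphi^{k}$ and $\vect{b}_0=(u\ta{a})\Phi^k\tau$, so that by \eqref{eq: EndI pi} one has $(u\ta{a})\Phi^k=w\,\t^{\vect{b}_0}$. Applying \Cref{cor: homos FABF I} to $\Phi^p$ then gives
\[
(u\ta{a})\Phi^{k+p}\,=\,(w\,\t^{\vect{b}_0})\,\Phi^p\,=\,w\varphi^p\,\t^{\vect{b}_0\matr{Q}^p+w\,\bbeta_{\Phi^p}},
\]
whose $\tau$-component is precisely $\vect{b}_0\matr{Q}^p+u\varphi^{k}\,\bbeta_{\Phi^p}=\vect{b}_0\,\Theta$. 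For the inductive step, write $(u\ta{a})\Phi^{k+(\lambda-1)p}=w'\,\t^{\vect{b}_{\lambda-1}}$ with $\vect{b}_{\lambda-1}=(u\ta{a})\Phi^{k+(\lambda-1)p}\tau$; by \eqref{eq: EndI pi} and the stabilization observed above, $w'=u\varphi^{k+(\lambda-1)p}=u\varphi^{k}=w$. The same application of \Cref{cor: homos FABF I} to $\Phi^p$ then produces the affine recursion
\[
\vect{b}_\lambda\,=\,\vect{b}_{\lambda-1}\matr{Q}^p+w\,\bbeta_{\Phi^p}\,=\,\vect{b}_{\lambda-1}\,\Theta,
\]
which together with the inductive hypothesis $\vect{b}_{\lambda-1}=\vect{b}_0\,\Theta^{\lambda-1}$ immediately yields $\vect{b}_\lambda=\vect{b}_0\,\Theta^\lambda$, as claimed.

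There is no genuine obstacle: the statement is essentially a bookkeeping consequence of the explicit description of powers of type I endomorphisms once one recognizes that the role of the periodicity hypothesis is exactly to keep the base point of the $\bbeta_{\Phi^p}$-term constant across all iterates, so that the recursion can be written as iteration of the single affine map $\Theta$.
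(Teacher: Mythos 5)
Your proof is correct and follows essentially the same route as the paper's: both apply the one-step recurrence of \Cref{cor: homos FABF I} (i.e.\ \eqref{eq: EndI tau}) to the type I endomorphism $\Phi^p$ and use the stabilization $u\varphi^{k+(\lambda-1)p}=u\varphi^{k}$ to freeze the $\bbeta_{\Phi^p}$-term, turning the recursion into iteration of the affine map $\Theta$. Your write-up merely makes the induction on $\lambda$ explicit where the paper leaves it implicit.
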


\begin{proof}
Using the recurrences \eqref{eq: EndI tau} and \eqref{eq: EndI^k tau rec},
and taking into account
that $u\varphi^{k + p} = u\varphi^{k}$ we obtain:
\begin{equation}
\begin{aligned} \label{eq: periodic affine}
(u\ta{a})\Phi^{k + \lambda p}\tau
&\,=\,
(u\ta{a})\Phi^{k + (\lambda-1) p} \Phi^{p}\tau\\
&\,=\,
(u\ta{a})\Phi^{k + (\lambda-1) p} \tau \,\matr{Q}^p +
u \varphi^{k+ (\lambda - 1) p} \, \bbeta_{\Phi^p}\\
&\,=\,
(u\ta{a})\Phi^{k + (\lambda-1) p}  \tau \,\matr{Q}^p +
u \varphi^{k} \, \bbeta_{\Phi^p}
\end{aligned}
\end{equation}
and the claimed result follows.
\end{proof}

\begin{thm} \label{thm: BrP typeI}
Let $\Ga$ be a FABF group. Then, the Brinkmann problem $\BrP_{\Endi}(\Ga)$ is decidable.
That, is
there exists an algorithm that, given $u\ta{a},v\ta{b} \in \Ga$ and $\Phi \in \Endi(\Ga)$, 
decides whether there exists some $k \in \NN$ such that $(u \ta{a}) \Phi^k = v \ta{b}$.

In particular, $\BrPa(\Ga)$ and $\BrPm(\Ga)$ are decidable. 
\end{thm}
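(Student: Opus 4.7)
The plan is to use the projection $\pi \colon \Ga \to \Fn$ to split the problem into a free-group part (handled by \Cref{thm: BrP free}) and a free-abelian part (handled by \Cref{thm: KannanLipton}), linked together by the affine recursion \Cref{lem: powers periodic affine}. Fix $\Phi = \Phi_{\varphi,\matr{Q},\matr{P}} \in \Endi(\Ga)$ and $u\ta{a}, v\ta{b} \in \Ga$; note that any equality $(u\ta{a})\Phi^{k}=v\ta{b}$ forces $u\varphi^{k}=v$ by \Cref{eq: EndI pi}. Since $\BrP_{\varphi}(\Fn)$ and $\WP(\Fn)$ are decidable (\Cref{thm: BrP free}), \Cref{lem: compute logs} lets us explicitly compute the set $\varphi\text{-}\!\log_{u}(v)$. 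If it is empty, answer \nop. Otherwise, by \Cref{lem: philog form}, it has the form $k_{0}+p\NN$ for computable $k_{0}\geq 0$ and $p\geq 0$ (the $\varphi$-period of $v$).

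If $p=0$ the only candidate exponent is $k_{0}$, so by \Cref{cor: EndI^k} we compute $(u\ta{a})\Phi^{k_{0}}$ directly and check whether its abelian part equals $\vect{b}$. If $p>0$ we must decide whether some $\lambda\geq 0$ satisfies $(u\ta{a})\Phi^{k_{0}+\lambda p}\tau=\vect{b}$. Since $u\varphi^{k_{0}+p}=u\varphi^{k_{0}}$, \Cref{lem: powers periodic affine} yields
\[
(u\ta{a})\Phi^{k_{0}+\lambda p}\tau \,=\, \bigl((u\ta{a})\Phi^{k_{0}}\tau\bigr)\,\Theta^{\lambda},
\]
where $\Theta \colon \Zm\to \Zm$ is the affine map $\vect{x}\mapsto \vect{x}\matr{Q}^{p}+u\varphi^{k_{0}}\bbeta_{\Phi^{p}}$. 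Both the starting point $(u\ta{a})\Phi^{k_{0}}\tau$ (by \Cref{cor: EndI^k}) and the data defining $\Theta$ are effectively computable. Using the linearization trick of \Cref{rem: Kannan affine} we embed $\Theta$ as a linear transformation on $\ZZ^{m+1}$; the question of whether $\vect{b}$ lies in the forward $\Theta$-orbit of the starting point thereby becomes an instance of the Brinkmann problem for a linear self-map of a finitely generated free-abelian group, which is decidable by \Cref{thm: KannanLipton}.

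Combining the two steps gives the claimed algorithm, and the decidability of $\BrPa(\Ga)$ and $\BrPm(\Ga)$ is an immediate corollary since every automorphism and every monomorphism of $\Ga$ is of type~I by \Cref{prop: injective hom,prop: bijective hom}. The main obstacle is that the iteration of $\Phi$ on $\Ga$ is genuinely non-linear, so the naive enumeration of $(u\ta{a})\Phi^{k}$ gives only a semi-decision procedure; the crucial point is that once the free component becomes $\varphi$-periodic, \Cref{lem: powers periodic affine} collapses the iteration of $\Phi$ on the whole group to an affine iteration on $\Zm$, at which point the Kannan--Lipton theorem closes the argument.
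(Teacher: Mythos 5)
Your proposal is correct and follows essentially the same route as the paper: split via $\pi$ and $\tau$, solve the free part with $\BrPe(\Fn)$ to get $\varphi\text{-}\!\log_u(v)=k_0+p\NN$, treat $p=0$ by direct computation, and for $p\geq 1$ collapse the abelian part to an affine iteration via \Cref{lem: powers periodic affine} and conclude with Kannan--Lipton through the linearization of \Cref{rem: Kannan affine}. No gaps; the argument matches the paper's proof step for step.
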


\begin{proof}
Recall that type I endomorphisms of $\Ga$ are of the form $\Phi_{\varphi,\matr{Q},\matr{P}}$ in \eqref{eq: type I}, where ${\varphi\in \End(\Fn)}$, $\matr{Q} \in \mathcal{M}_{m}(\ZZ)$
and $\matr{P} \in \mathcal{M}_{n \times m}(\ZZ)$ satisfying
$
    \matr{Q} \matr{A}_{x\! \varphi}
    = \matr{A}_x \matr{Q}$,
for all $ x\in \Fn$. 
So, let $\matr{A}_1,\ldots \matr{A}_n \in \GL_m(\ZZ)$ be unimodular matrices (defining the semidirect action $\acti$), and suppose we are given such $(\varphi,\matr{Q},\matr{P})$
and two elements $u\ta{a},v\ta{b}\in \Ga$.
We want to decide whether there exists some $k\in \NN$ such that $(u \ta{a}) \Phi^k = v \ta{b}$, or equivalently --- 
splitting the free and free-abelian parts --- such that the following two conditions hold:
\begin{align}[left=\empheqlbrace]
    \ (u) \varphi^k \,=\, (u\ta{a}) \Phi^k \pi &\,=\, v \label{eq: }\\\hspace{2pt}
    \ (u\ta{a}) \Phi^k \tau 
    &\,=\, \vect{b} \label{eq: BrP ab}
\end{align}

Note that the restriction to the first condition is just
an instance of $\BrPe(\Fn)$, which is decidable by \Cref{thm: BrP free}.(c). Hence, if $\BrPe(\Fn)$ answers \nop\ on input $(u,v,\varphi)$
(that is, there is no $k \in \NN$ such that $u \varphi^k = v$)
then the answer to our problem is also \nop, and we have finished. Otherwise, (since $\WP(\Ga)$ is decidable) from \Cref{lem: compute logs}, 
we can assume that $\BrPe(\Fn)$ outputs $k_0 = \min\set{k\in \NN \st u\varphi^k=v}$ and $p\in \NN$ such that $\varphi\text{-\!}\log_u(v) = k_0 + p\NN$.

If $p=0,$ then $k_0$ is the only candidate solution for our problem. So, we simply compute $(u\ta{a})\Phi^{k_0}$ and verify whether $(u\ta{a})\Phi^{k_0} = v\ta{b}$. If so, then our answer is $\yep$ and otherwise our answer is~\nop.

It only remains to decide the case where $p \geq 1$.
In this case, the set of solutions for $u \varphi^k = v$ is precisely $\varphi\text{-\!}\log_u(v) = k_0 + p\NN$. So it is enough to decide whether one of them satisfies \eqref{eq: BrP ab}, that is whether
\begin{equation} \label{eq: BrPa tau}
    \exists \lambda \in \NN \st (u\ta{a}) \Phi^{k_0 + \lambda p} \tau \,=\, \vect{b} \,,
\end{equation} 
which according to \Cref{lem: powers periodic affine},  can be rewritten as:
\begin{equation}
\exists \lambda \in \NN \st ((u\ta{a}) \Phi^{k_0} \tau) \, \Theta^{\lambda} \,=\, \vect{b} ,
\end{equation}
which, in turn, is just an instance of the Brinkmann problem for affine transformations, and hence decidable by \Cref{thm: KannanLipton} and \Cref{rem: Kannan affine}. This completes the proof.
\end{proof}

\begin{rem}\label{rem: BrP yes BrCP no}
The classical question of whether a group with solvable word problem would necessarily have solvable conjugacy problem was answered negatively in \cite{miller_iii_group-theoretic_1971}. Since~$\CP(\FABF)$ (and hence $\BrCP(\FABF)$) is known to be unsolvable, it follows from the last theorem that the family of FABFs contains examples of groups with solvable $\BrP$ but unsolvable $\BrCP$.
\end{rem}

\section*{Acknowledgements}
The first author was supported by
CMUP, which is financed by national funds through FCT – Fundação
para a Ciência e a Tecnologia, I.P., under the projects with reference UIDB/00144/2020
and UIDP/00144/2020.

The second author acknowledges support from the Spanish \emph{Agencia Estatal de Investigación} through grant PID2021-126851NB-I00 (AEI/FEDER, UE), as well as from the \emph{Universitat Politècnica de Catalunya} in the form of a ``María Zambrano'' scholarship.


\renewcommand*{\bibfont}{\small}
\printbibliography
\Addresses


\end{document}